\documentclass{amsart}

\usepackage{amssymb}

\usepackage{amsmath}
\usepackage[title]{appendix}
\usepackage{graphicx, enumerate, url}
\usepackage{booktabs}
\usepackage{subcaption}
\usepackage{multirow,bigdelim}

\usepackage{stmaryrd}
\usepackage{algorithm}
\usepackage{algpseudocode}

\usepackage[mathscr]{eucal}

\usepackage{hyperref}
\usepackage{cleveref}





\newtheorem{definition}{Definition}
\newtheorem{remark}{Remark}
\newtheorem{proposition}{Proposition}
\newtheorem{lemma}{Lemma}
\newtheorem{theorem}{Theorem}

\theoremstyle{definition}

\newcommand{\sfT}{\mathsf{T}}

\newcommand{\ttT}{\mathtt{T}}
\newcommand{\ttX}{\mathtt{X}}
\newcommand{\ttY}{\mathtt{Y}}
\newcommand{\ttZ}{\mathtt{Z}}
\newcommand{\ttW}{\mathtt{W}}

\newcommand{\ttK}{\mathtt{K}}
\newcommand{\ttM}{\mathtt{M}}

\newcommand{\scrK}{\mathscr{K}}
\newcommand{\scrF}{\mathscr{F}}

\newcommand{\vd}{\vec{d}}

\newcommand*\qs{\textsc{\char13}s }

\newcommand{\sfS}{\mathsf{S}}
\newcommand{\ttF}{\mathsf{F}}

\newcommand{\bttY}{\overbar{\mathtt{Y}}}
\newcommand{\bttZ}{\overbar{\mathtt{Z}}}

\newcommand{\R}{\mathbb{R}}

\newcommand{\rD}{\mathrm{D}}
\newcommand{\rG}{\mathcal{G}}
\newcommand{\trG}{\tilde{\mathcal{G}}}
\newcommand{\trH}{\tilde{\mathcal{H}}}
\newcommand{\trK}{\tilde{\mathcal{K}}}

\newcommand{\fg}{\mathfrak{g}}
\newcommand{\fq}{\mathfrak{q}}
\newcommand{\fk}{\mathfrak{k}}

\newcommand{\tfm}{\tilde{\fm}}
\newcommand{\tfg}{\tilde{\fg}}
\newcommand{\tfh}{\tilde{\mathfrak{h}}}
\newcommand{\tfk}{\tilde{\fk}}

\newcommand{\tfmfq}{\tilde{\fm}_{\fqh}}

\newcommand{\hrB}{\hat{\rB}}

\newcommand{\ttpa}{\mathcal{P}_\mathfrak{a}}
\newcommand{\ttpap}{\mathcal{P}_{\faperp}}
\newcommand{\ttpm}{\mathcal{P}_\mathfrak{m}}
\newcommand{\faperp}{\mathfrak{a}_{_{\perp}}}
\newcommand{\fvperp}{\mathfrak{v}_{_{\perp}}}
\newcommand{\fm}{\mathfrak{m}}
\newcommand{\fv}{\mathfrak{v}}

\newcommand{\rH}{\mathcal{H}}

\newcommand{\fqh}{\bar{\fq}}

\newcommand{\ttA}{\mathtt{A}}
\newcommand{\ttB}{\mathtt{B}}
\newcommand{\ttC}{\mathtt{C}}

\newcommand{\ttV}{\mathtt{V}}

\newcommand{\rK}{\mathcal{K}}
\newcommand{\rA}{\mathcal{A}}

\newcommand{\sym}{\mathrm{sym}}
\newcommand{\fsym}{\mathrm{symf}}

\newcommand{\asym}{\mathrm{skew}}

\newcommand{\C}{\mathbb{C}}

\newcommand{\V}{\mathbb{V}}

\newcommand{\cV}{\mathcal{V}}

\newcommand{\cW}{\mathcal{W}}

\newcommand{\cD}{\mathcal{D}}

\newcommand{\cE}{\mathcal{E}}

\newcommand{\Herm}{\mathrm{Sym}}
\newcommand{\AHerm}{\mathrm{Skew}}
\newcommand{\St}[2]{\mathrm{St}_{#1, #2}}
\newcommand{\Gr}[2]{\mathrm{Gr}_{#1, #2}}

\newcommand{\Spl}{\mathrm{Sp}}

\newcommand{\cH}{\mathcal{H}}

\newcommand{\cI}{\mathcal{I}}

\newcommand{\cF}{\mathcal{F}}
\newcommand{\cP}{\mathcal{P}}

\newcommand{\cK}{\mathcal{K}}
\newcommand{\cM}{\mathcal{M}}
\newcommand{\overbar}[1]{\mkern 1.5mu\overline{\mkern-1.5mu#1\mkern-1.5mu}\mkern 1.5mu}
\newcommand{\bcM}{\overbar{\mathcal{M}}}

\newcommand{\gl}{\mathfrak{gl}}
\newcommand{\oo}{\mathfrak{so}}

\newcommand{\SO}{\mathrm{SO}}

\newcommand{\cS}{\mathcal{S}}

\newcommand{\lb}{\llbracket}
\newcommand{\rb}{\rrbracket}

\newcommand{\rI}{\rm{I}}
\newcommand{\rB}{\mathcal{B}}

\newcommand{\bX}{\overbar{X}}

\newcommand{\bA}{A}

\newcommand{\bxi}{\bar{\xi}}
\newcommand{\bareta}{\bar{\eta}}

\newcommand{\bY}{Y}

\newcommand{\rP}{\mathrm{P}}

\newcommand{\sfg}{\mathbf{g}}

\newcommand{\fa}{\mathfrak{a}}

\newcommand{\vbeta}{\vec{\beta}}
\newcommand{\Yperp}{Y_{_{\perp}}}

\newcommand{\dIperp}{\dI_{_{\perp}}}

\DeclareMathOperator{\diag}{diag}

\DeclareMathOperator{\ad}{ad}
\DeclareMathOperator{\Ad}{Ad}
\DeclareMathOperator{\Tr}{Tr}

\DeclareMathOperator{\GL}{GL}

\DeclareMathOperator{\OO}{O}

\DeclareMathOperator{\Flag}{Flag}

\DeclareMathOperator{\dI}{I}

\DeclareMathOperator{\rC}{C}

\DeclareMathOperator{\expm}{expm}
\DeclareMathOperator{\expa}{expv}

\begin{document}

\ifpdf
  \DeclareGraphicsExtensions{.eps,.pdf,.png,.jpg}
\else
  \DeclareGraphicsExtensions{.eps}
\fi





\title{Parallel transport on matrix manifolds and Exponential Action}

\author{Du Nguyen}
\address{Independent, Darien, CT}
\email{nguyendu@post.harvard.edu}

\author{Stefan Sommer}
\address{University of Copenhagen}
\email{sommer@di.ku.dk}

\begin{abstract}We express parallel transport for several common matrix Lie groups with a family of pseudo-Riemannian metrics in terms of matrix exponential and exponential actions. The metrics are constructed from a deformation of a bi-invariant metric and are naturally reductive. There is a similar picture for homogeneous spaces when taking quotients satisfying a general condition. In particular, for a Stiefel manifold of orthogonal matrices of size $n\times d$, we give an expression for parallel transport along a geodesic from time zero to $t$, that could be computed with time complexity of $O(n d^2)$ for small $t$, and of $O(td^3)$ for large $t$, contributing a step in a long-standing open problem in matrix manifolds. A similar result holds for {\it flag manifolds} with the canonical metric. We also show the parallel transport formulas for the {\it general linear group} and the {\it special orthogonal group} under these metrics.
\end{abstract}

\keywords{Parallel transport, Stiefel manifolds, Flag manifolds, Matrix Exponential, Exponential Action, Matrix manifolds, Reductive Group.}

\subjclass{15A16, 15A18, 15B10, 22E70, 51F25, 53C80, 53Z99}
\maketitle

\section{Introduction}In recent years, Riemannian geometry has found many applications in optimization, image, vision, and data modeling with constraints and symmetry. A fundamental concept in geometry is {\it parallel transport}. While a tangent vector in Euclidean space can be transported between different points with almost no additional calculation, moving a tangent vector between points on a Riemannian manifold is more challenging computationally. Parallel transport allows one to move tangent vectors along curves in a manifold in a way consistent with geometry, which is important in clustering problems, and in algorithms that require accounting for past increments. For example, the well-known optimization methods L-BFGS or conjugate gradient on an Euclidean space keep a history of past movements and use them to decide the next moves. These increments are represented as tangent vectors at different points in the iteration and must be transported to the current iterative point for aggregation. Similar issues arise in computer vision.

In the now classical paper \cite{Edelman_1999} twenty-five years ago, the authors twice mentioned that there was no known $O(nd^2)$ algorithm for parallel transport on the Stiefel manifold $\St{n}{d}$. To the best of our knowledge, this problem remains unsolved. In this paper, we contribute toward a solution for this problem with the help of \emph{exponential action}. We show with an appropriate transformation, a class of metrics on matrix manifolds have geodesics computable by matrix exponential, while the parallel transport equation could be reduced to a linear ODE with constant coefficients, and thus could be solved by exponential action. For the case of the Stiefel manifolds, we show any tangent vector could be split into two parts, one in a submanifold similar to a great circle, where we can use exponential action. The remaining part, normal to a vector subspace containing the geodesic, can be transported using matrix exponential. We also show that the formula for the Stiefel manifold is a special case for pseudo-Riemannian metrics in a family with closed-form geodesic and parallel transport. Besides the state-of-the-art algorithm in the Stiefel case, we hope our results will provide insight into the properties of parallel transport.

\subsection{Contribution}
We give an explicit formula \cref{eq:stieftran} with evaluation cost $O(C_1nd^2+C_2td^3)$ for the parallel transport along a geodesic segment of length $t$ on the Stiefel manifold $\St{n}{d}$ of orthogonal matrices of size $n\times d$ ($d < n$) \cref{eq:stieftran}  for constants $C_1$ and $C_2$. The cost $O(C_2td^3)$ comes from the evaluation of an \emph{exponential action}. This time-dependent cost is less significant for a shorter time, which is the typical situation in optimization. Thus, in this case, the remaining cost $O(C_1nd^2)$ becomes dominant. For a bounded $t$, the algorithm could also be considered of time complexity $O(nd^2)$, and since Stiefel manifolds are compact, there is an upper bound to the distance between two points, thus, the algorithm is practically $O(nd^2)$ for reasonable time $t$\footnote{We thank a reviewer for this observation.}. While the exponential action could be expensive, we propose a custom implementation based on a norm estimate that speeds up the evaluation significantly. We also hope the closed-form formula provides new insights into parallel transports, leading to new transport algorithms and applications.

The framework to derive this formula also applies to manifolds related to Stiefel manifolds, giving us a new parallel transport formula for flag manifolds with the canonical metric and recovering the known formula for Grassmann manifolds.

As explained, this result is based on the observation that transport in directions tangent to a Stiefel submanifold (similar to a great circle) could be done efficiently by solving an equivalent problem that is reduced in dimension and is called \emph{baby problem} in \cite[chapter 5]{Rentmee}, while transport in directions orthogonal to this submanifold could be done by matrix exponential. This observation may apply to other families of manifolds.

The \emph{baby problem} is solved by lifting to a matrix Lie group. This solution generalizes to matrix Lie groups and homogeneous spaces equipped with a \emph{deformation metric} \cite{Cheeger1973,Jensen,DAZ}. In particular, given a Lie group $\rG$ with a bi-invariant pseudo-Riemannian metric represented by a symmetric bilinear form on its Lie algebra $\fg$ and a Lie subalgebra $\fa\subset\fg$ such that the bilinear form is nondegenerate when restricted to $\fa$, if we replace the metric on $\fa$ with another bi-invariant metric then the resulting metric is naturally reductive. The geodesic equation and geodesics are explicit, while parallel transport could be computed by exponential action. The replacement metric on $\fa$ is often a scalar multiple of the original metric. In some examples, we can change the sign of the metric on $\fa$, modifying the bi-invariant pseudo-Riemannian metric on $\fg$ to a naturally reductive Riemannian one, as in the case of $\GL(n)$ in \cite{VAV} and \cref{sec:GL}.

For a homogeneous manifold $\rG/\rK$ with $\rG$ equipped with a deformation metric, we give a general condition in \cref{theo:hornatural} for the manifold $\rG/\rK$ to be also naturally reductive, with geodesics and parallel transport computable by exponential action.

With this result, we show explicit parallel transport formulas for this family of metrics for the {\it general linear groups} and the {\it special orthogonal groups}. While not discussed here, the results also apply to \emph{symplectic groups} \cite{Fiori}. In particular, the Euclidean embedded metric of  Stiefel manifolds and $\GL(n)$ with the Frobenius metric \cite{Edelman_1999,VAV} are naturally reductive. While instances of this result are already known in the literature \cite{DAZ,VMSil,ExtCurveStiefel}, 
the explicit ODE for parallel transport, based on \cite{SmithThesis,ExtCurveStiefel} and the observation that certain tractable Riemannian metrics on non compact groups could be considered a deformation of the pseudo-Riemannian trace form is novel.

The nondegeneracy of the trace form as a bi-invariant metric is guaranteed if we restrict to Lie subalgebras that are equal to their own transpose, such as $\oo(n)$ or the full $n\times n$ matrix algebra, as opposed to the algebra of upper triangular matrices. 


\subsection{Related literature and State-of-the-Art}Closed-form geodesics formulas for the general linear group are in \cite{VAV,MartinNeff,Barp2019}. In the original paper \cite{Edelman_1999}, Edelman et. al. gave an equation (equation 2.27) for parallel transport for the Stiefel manifold with the {\it canonical metric}. The differential equation is linear but even with exponential action, the cost is $O(n^3)$. They also mentioned that this formula is a special case for reductive spaces and also gave an $O(nd^2)$ algorithm for the Grassmann transport. The paper referenced the thesis \cite{SmithThesis}, which gave a parallel transport formula for a quotient $\rG/\rH$ in its Proposition 2.12. The proof assumed a bi-invariant metric on $\rG$. When the metric is constructed from a deformation, (see \cref{theo:main,theo:hornatural}), which is essentially a quotient metric on a larger group $\trG$ with a bi-invariant metric, we can apply this result to compute the parallel transport. Exploiting a special property of Stiefel and flag manifolds to split the transported vector as noted previously, we avoid the $O(n^3)$ cost.


In \cite{HuperFlor}, the authors give a formula for parallel transport for Stiefel manifolds in the Euclidean embedded metric ($\alpha=1$ in \cref{eq:StiefelMetric}). The formula is in terms of a function $F$ satisfying a matrix ODE, depending on the equation of the curve, and requires vectorization. The geodesic is calculated with $O(n^3)$ cost for a given time. Vector transport \cite[chapter 8]{AMS_book}, often used in Riemannian optimization, is another method to transport tangent vectors on manifolds. They are often less expensive computationally, although they are generally not isometries (but there are remedies \cite{SatoRiemannianCG} in Riemannian optimization). We are not aware of other progress in parallel transport for Stiefel manifolds. Numerically, in \cite{ladder,LoPenn,LoAyPenn}, the authors introduce numerical algorithms to solve transport problems, in particular, the ladder schemes.
As closed-form formulas are not available most of the time, they are of great importance.

The family of metrics considered here could be considered a deformation of a pseudo-Riemannian metric (also called semi-Riemannian metric) to a Riemannian metric, see \cite{GZ2000}, the deformation is often called the Cheeger deformation, although it has also been used by other authors \cite{Jensen,Sagle1970}. The deformation comes from a (pseudo-Riemannian) submersion and is often used to produce metrics with desired curvature properties.  In \cite[Theorem 1]{DAZ}, the authors analyzed naturally reductive space arising from this construction for compact groups. Our \cref{theo:main} could be considered a partial extension of their construction to the pseudo-Riemannian case. The article \cite{Gordon1985} considers the non-compact spaces from a Lie theoretic approach, which we complement by offering an explicit metric construction and geometric formulas.

In \cite{VMSil}, the authors analyze sub-Riemannian geodesics arising from this family and note that this type of deformation produces closed-form geodesic. The resulting induced metrics for Stiefel manifolds are studied in \cite{ExtCurveStiefel}. In \cite{NguyenLie}, one of the authors computed the connection and curvature for this family in the case of a Riemannian bi-invariant metric. Flag manifolds are studied by many authors, where Riemannian Hessian, exponential map, logarithm, and the Levi-Civita connection are studied in \cite{YeLim,Pennec,Szwagier,NguyenOperator,NguyenGeodesic}.

The fact that parallel transport is an isometry is of interest in the optimization literature, allowing estimates for convergence analysis, see \cite{SatoRiemannianCG,RingWirth}.

\subsection{Outline}In the next section, we review the background required in the paper, including bilinear form and pseudo-Riemannian geometry. We introduce the concept of transposable Lie-subalgebra, which includes $\gl$ and $\oo$. We introduce intertwining operators and the family of deformation metrics in \cref{theo:main} and discuss the condition for the metrics to be Riemmanian.

We show the geodesic and parallel transport formula for general linear groups in \cref{sec:GL} and special orthogonal groups in \cref{sec:SOSE}. In \cref{sec:submerse}, we show the quotient by a subgroup with a particular structure has closed-form geodesics. The parallel transport equation can be reduced to a linear ODE with variable coefficients in the general case, and constant coefficients, solvable by exponential action in special cases. In \cref{sec:stiefel} and \cref{sec:flag}, we provide parallel transport formulas for the Stiefel and flag manifolds, with a discussion of Grassmann manifolds in \cref{app:appGrass}. In \cref{sec:numer}, we provide numerical experiments. We end the paper with a discussion. In \cref{sec:expact} we review the exponential action and prove our norm estimate.

\subsection{Notations} The general linear group $\GL(n)$ is the group of all invertible matrices in $\R^{n\times n}$, the connected subgroup with positive determinant is $\GL^+(n)$, the corresponding Lie algebra $\gl(n)$ could be identified with $\R^{n\times n}$. The special orthogonal group $\SO(n)$ is the group of orthogonal matrices of determinant $1$, its Lie algebra $\oo(n)$ could be identified as a vector space with the space of antisymmetric matrices $\AHerm_n$. Its orthogonal complement is the space $\Herm_n$ of symmetric matrices. The Stiefel, flag, and Grassmann manifolds with notations $\St{n}{p}, \Gr{n}{p}, \Flag(\vd)$ will be introduced later. We write $\sym$ for the symmetrize operator $A_{\sym} = \frac{1}{2}(A+A^{\sfT})$ for $A\in\R^{n\times}$ and $\asym$ for the antisymmetrize operator $A_{\asym} = \frac{1}{2}(A-A^{\sfT})$. The notation $\cP_{_{\cV}}$ denotes the projection to a subspace $\cV$.

In a matrix space $\R^{n\times n}$, we define the $\ad$ operator $\ad_{\ttX}\ttY = [\ttX, \ttY] = \ttX\ttY - \ttY\ttX$ for two matrices $\ttX, \ttY$ and the $\Ad$ operator $\Ad_K\ttX = K\ttX K^{-1}$ for an invertible matrix $K\in \R^{n\times n}$.

\section{Background Review}We treat exponential action in the appendix. The main indefinite linear algebra and differential geometry background are reviewed here.
\subsection{Symmetric Bilinear forms}
As we will discuss pseudo-Riemannian metric, it is appropriate to review some quick facts about bilinear forms, which are indefinite scalar products. While we work with the real case, the proofs and results in the main reference \cite{GohbergIndefinite} for the Hermitian case carry through readily. The main results that we need are about subspaces. A symmetric bilinear form, or just {\it form}, or \emph{scalar product} on a vector space $\cE$ is a scalar-valued bilinear map from $\cE\times\cE$ to $\R$, denoted by $\langle v, w\rangle$. For brevity, we will omit the word {\it symmetric} as we will only work with symmetric bilinear forms. If we identify $\cE$ with $\R^n$ and represent the form by a matrix $\rB$, then $\langle v, w\rangle  = \langle v, w\rangle_{\rB} = v^{\sfT}\rB w$ for $v, w\in\cE$. It is nondegenerate (or we also say $\cE$ is nondegenerate) if $\rB$ is nonsingular, or $\langle v, w\rangle = 0$ for all $w\in\cE$ implies $v=0$. This latter formulation is useful if $\cE$ is a matrix space. We can restrict the form $\langle , \rangle$  to a subspace $\cV\subset \cE$. In contrast to the positive-definite case, an indefinite scalar product may be degenerate on $\cV$ even if it is nondegenerate on $\cE$. We define the orthogonal complement $\cV_{_{\perp}}$ of $\cV$ as follows
\begin{equation}\cV_{_{\perp}} = \{ w\in\cE|\; \langle v, w\rangle = 0\text{ for all } v\in\cV\}.\label{eq:projspace} \end{equation}
Then $\cV_{_{\perp}}$ may intersect $\cV$ non-trivially. The real case of \cite[Proposition 2.2.2]{GohbergIndefinite} generalizes several well-known results for inner product spaces in the lemma below.
\begin{lemma}\label{lem:split} A subspace $\cV\subset \cE$ is nondegenerate if and only if $\cV_{_{\perp}}\cap \cV= \{0\}$. Assuming this condition, we have a unique decomposition $\cE = \cV\oplus \cV_{_{\perp}}$, or any vector $w\in\cE$ could be written in a unique way as $w = w_{\cV} +w_{\cV_{\perp}}$, with $w_{\cV}\in \cV, w_{\cV_{\perp}}\in \cV_{\perp}$.  In that case, the map $\cP_{_{\cV}}: w\mapsto w_{\cV}$ is the orthogonal projection to $\cV$. It satisfies
\begin{equation}  \langle \cP_{_{\cV}}w, v \rangle = \langle w, v\rangle \text{ for } v\in \cV, w\in\cE.\label{lem:orth}\end{equation}
\end{lemma}

\subsection{Manifolds, geodesics, and parallel transport}
For the background on matrix manifolds, the reader can consult \cite{Edelman_1999,Gallier,AMS_book}. We will also need background on \emph{naturally reductive homogeneous spaces}, see \cite{ONeil1983}, reviewed in this section. In the embedded setup, we consider a smooth subset $\cM$ of a matrix space $\cE=\R^{n\times m}$, a {\it manifold}. We equip $\cM$ with a pseudo-Riemannian metric (also called semi-Riemannian metric) $\sfg$, which is a smooth scalar-valued bilinear form $\sfg_X\langle,\rangle$ on the tangent space of $\cM$ at each $X\in\cM$. We write $\sfg\langle,\rangle$ or $\langle,\rangle_{\sfg}$ if the point $X$ is understood. The bilinear form is assumed to be nondegenerate. When it is positive-definite, we have a Riemannian metric, otherwise, the form could have negative but nonzero eigenvalues.

A vector field $\ttY$ on $\cM$ could be considered a $\cE=\R^{n\times m}$-valued function, such that for each $X\in \cM$, $\ttY(X)$ is tangent to $\cM$. If $\ttZ$ is another vector field, we can take the directional derivative $\rD_{\ttY}\ttZ$, considering $\ttZ$ as an $\cE$-valued function. However, $\rD_{\ttY}\ttZ$ is usually not a vector field (the same reason as why an acceleration on a sphere is not tangent), but we can adjust $\rD_{\ttY}\ttZ$ by adding a term of the form $\Gamma(\ttY, \ttZ)$ so that $\nabla_{\ttY}\ttZ := \rD_{\ttY}\ttZ + \Gamma(\ttY, \ttZ)$ is a vector field. The term $\Gamma(\ttY, \ttZ)$ is called a Christoffel function in \cite{Edelman_1999}, while $\nabla_{\ttY}\ttZ$ is called a {\it covariant derivative}, or {\it a connection}. Note that $\Gamma$ is dependent on $X$, and is bilinear in the two tangent vector variables $\xi = \ttY(X), \eta = \ttZ(X)$ for $\xi, \eta\in T_X\cM$. We write $\Gamma(X; \xi, \eta)$ when we need to show the variable $X$ explicitly. For example, for the sphere, we can check that $\Gamma(X; \xi, \eta) = X\xi^{\sfT}\eta$ will make $\rD_{\ttY}\ttZ + \Gamma(\ttY, \ttZ)$ a vector field. 

There is more than one way to define covariant derivatives. However, there is a unique connection satisfying two additional conditions, called the Levi-Civita connection, which will be the kind of connection that we consider in this article. The two conditions are torsion-free, or $\Gamma(\ttY, \ttZ) = \Gamma(\ttZ, \ttY)$ for two vector fields $\ttY, \ttZ$, and metric compatible ($\rD_{\ttY}\langle \ttZ, \ttZ\rangle_{\sfg} = 2\langle \ttZ, \nabla_{\ttY}\ttZ\rangle_{\sfg}$.)

A geodesic from $X$ with initial velocity $\xi\in T_X\cM$ is a curve $\gamma$ on the manifold satisfying the equation $\nabla_{\dot{\gamma}(t)}\dot{\gamma}(t)=0$, with the initial value condition $\gamma(0) = X, \dot{\gamma}(0) = \xi$. The map $v\mapsto \gamma(1)$ is the Riemannian exponential map, which is a different notion than Lie group exponential \cite[section 2.1, chapter 4]{Gallier}.  A central theme that we will exploit is for some special metrics, the two concepts of exponential are related.

If $\eta\in T_X\cM$ is another tangent vector at $X$, then the parallel transport of $\eta$ along the geodesic $\gamma(t)$ from $0$ to $t$ is the vector $\ttT_0^t(\gamma)\eta= \Delta(t)$, where $\Delta$ is a {\it vector field along the curve} $\gamma$, that means for each $0 \leq s\leq t$, $\Delta(s)$ is tangent to $\cM$ at $\gamma(s)$. Additionally, $\Delta$ must satisfy the transport equation 
\begin{equation}\begin{gathered}\dot{\Delta}(s) + \Gamma(\gamma(s); \dot{\gamma}(s), \Delta(s)) = 0,\\
\Delta(0)=\eta.
\end{gathered}
\end{equation}  
We also consider quotient manifolds of matrix manifolds. This is a special case of a pseudo-Riemannian submersion (see \cite[Chapter 7]{ONeil1983}, our main reference for this topic).
In this situation, we assume there is a smooth map $\fq:\bcM\to\cM$ between two pseudo-Riemannian manifolds, such that $\fq$ is surjective and the differential $d\fq$ is surjective at every point. There is a decomposition $T_X\cM=\cV_X\oplus \cH_X$ to the vertical space $\cV_X$, the tangent space to $\fq^{-1}(X)$ and its orthogonal complement $\cH_X$. Vectors in $\cH_X$ are called horizontal vectors, and we can define horizontal vector fields. A tangent vector field on $\cM$ can be lifted uniquely to a horizontal vector field on $\bcM$. Let $\rH$ be the metric-compatible projection to the horizontal space then we have the horizontal lift formula for the Levi-Civita connection \cite[lemma 7.45]{ONeil1983}.
\begin{equation}\rH\overbar{\nabla}_{\bttY}\bttZ = \overline{\left(\nabla_{\ttY}\ttZ \right)}.
\end{equation}  

In the next section, we will define matrix groups, a major topic of this paper. It also provides background for quotient manifolds as examples of submersion.
\subsection{Matrix Lie groups, Lie algebra, left-invariant metrics, and matrix quotient spaces}
A Lie group is a differentiable manifold with a differentiable group structure. We are interested in subgroups of the general linear group $\cE=\GL(n)$, which is the (multiplication) group of invertible matrices in $\R^{n\times n}$. Thus, for such a group $\rG$, $\dI_n\in  \rG$ and $G_1^{-1}G_2\in\rG$ if $G_1, G_2\in\rG$.

We also consider cartesian products of matrix Lie groups, equipping the product $\rG\times \rH$ for two matrix Lie groups $\rG\in \R^{n_1\times n_1}, \rH\in\R^{n_2\times n_2}$ with the component-wise multiplication $(G_1, H_1)(G_2, H_2) = (G_1G_2, H_1H_2)$ and inversion. This is equivalent to embedding $\rG\times\rH$ in a block diagonal matrix subgroup of $\R^{(n_1+n_2)\times (n_1+n_2)}$, where $\rG$ is embedded in the top-left $n_1\times n_1$-block and $\rH$ with the bottom-right $n_2\times n_2$ block. This realization extends to tangent spaces and Lie algebras discussed below.

The tangent space $\fg$ of $\rG$ at $\dI=\dI_n$ is a {\it Lie subalgebra} of $\R^{n\times n}$. Recall a vector subspace $\fg$ of the square matrix space $\cE=\R^{n\times n}$ is called a Lie subalgebra if it is closed under the Lie bracket, that means for two matrices $\ttA, \ttB$ in $\fg$,
\begin{equation}[\ttA, \ttB] :=\ttA\ttB - \ttB\ttA\in \fg.
\end{equation}
The Lie bracket is anticommutative  and satisfies the Jacobi\qs identity \cite[Chapter 3]{Gallier}. The matrix exponential map $\expm$ maps $\fg$ to $\rG$. Conversely, given a Lie subalgebra $\fg$ of $\R^{n\times n}$, we can construct a connected Lie group with Lie algebra $\fg$. In this instant, $\expm$ corresponds to the {\it Lie exponential map} of a subgroup of $\GL(n)$. We have
\begin{equation}\Ad_{\expm(\ttZ)}\ttX = \expm(\ttZ)\ttX \expm(-\ttZ) = \expa(\ad_{\ttZ}, \ttX),\label{eq:expad}
\end{equation}
where $\ad_{\ttZ}: \ttX\mapsto [\ttZ, \ttX]$ and $\expa$ is the exponential action in \cref{sec:expact} \cite[Chapter 3]{Gallier}. From this, $\expm(\ttZ)\ttX\expm(-\ttZ)$ is in $\fg$ for $\ttZ, \ttX\in\fg$. The tangent space of $\rG$ at $X\in\rG$ could be identified with $X\fg$, or $\xi\in T_X\rG$ if and only if $X^{-1}\xi\in\fg$. Thus, if $\langle,\rangle$ is a nondegenerate form on $\fg$, we can define a pseudo-Riemannian metric on $\rG$ by defining for two tangent vectors $\xi, \eta$ at $X$
$\langle \xi, \eta\rangle_X = \langle X^{-1}\xi, X^{-1}\eta\rangle$,
which gives $\rG$ a {\it left-invariant} pseudo-Riemannian metric.

For a left-invariant metric, left multiplication is an isometry, a smooth, invertible map between two manifolds preserving the metric (between the same manifold in this case). A metric is {\it right-invariant} if for all $X_1\in \rG$, the map $R_{X_1}: X\mapsto XX_1, X\in \rG$ is an isometry, and it is \emph{bi-invariant} if it is both left and right invariant, and this is equivalent to $\Ad_{\rG}$-invariant on $\fg$, $\langle K\ttX K^{-1}, K\ttX K^{-1}\rangle =   \langle \ttX , \ttX \rangle$ for $X\in \fg, K\in \rG$. From \cite[Proposition 11.9]{ONeil1983}, if $\rG$ is connected, \emph{bi-invariance} is equivalent to the cyclic invariant condition (note the items in the brackets below preserve the cyclic order)
\begin{equation}\langle \ttA, [\ttB,\ttC] \rangle = \langle \ttB, [\ttC,\ttA] \rangle = \langle \ttC, [\ttA,\ttB] \rangle, \ttA, \ttB, \ttC \in \fg.\label{eq:biinvariant}
\end{equation}
From the same proposition, bi-invariant pseudo-Riemannian metrics on Lie groups have the important property that \emph{geodesics are given by the Lie exponential map}.

The concepts of homogeneous spaces, coset manifolds, group actions (both right and left), and isotropy groups are covered in \cite[Chapters 9 and 11]{ONeil1983}. 

We call a pseudo-Riemannian space $\cM$ a \emph{reductive space} \cite[chapter 11]{ONeil1983} if it is isometric to a quotient $\rG/\rK$, for Lie groups $\rG$ and $\rK$ with Lie algebras $\fg$ and $\fk$ such that there is a subspace $\fm\subset \fg$ complementary to $\fk\subset \fg$ with $\fm$ is $\Ad_{\rK}$-invariant ($K\fm K^{-1}\subset \fm$ for $K\in \rK$, and when $\rK$ is connected, this  is equivalent to $[\fk, \fm] \subset \fm $). From \cite[Proposition 10.3.1]{KobNom}, there is a one-to-one correspondence between $\rG$-invariant pseudo-Riemannian metric on $\rG/\rK$ and $\ad(\fk)$-invariant non-degenerate symmetric bilinear form $\langle, \rangle$ on $\fm$. That means
\begin{equation}\langle [\ttK, \ttM_1], \ttM_2\rangle = \langle \ttM_1, [\ttK, \ttM_2]\rangle  \text{ for } \ttK \in \fk, \ttM_1, \ttM_2\in \fm. \label{eq:ReductiveInv}
\end{equation}
Further, it is \emph{naturally reductive} if the form satisfies the cyclic condition
\begin{equation}\langle \ttM_1, \ttpm[\ttM_2,\ttM_3] \rangle = \langle \ttM_2, \ttpm[\ttM_3,\ttM_1] \rangle \text{ for } \ttM_1, \ttM_2, \ttM_3 \in \fm.\label{eq:NatReductiveInv}
\end{equation}
Here, $\ttpm$ is the projection to $\fm$ (using the fact that $\fk$ and $\fm$ are complementary), the term $\ttpm(\ttM_1)$ is typically written $(\ttM_1)_{\fm}$ in the differential geometric literature, and $\ttpm([\ttM_3,\ttM_1])$ is typically written $[\ttM_1,\ttM_3]_{\fm}$. Here, we follow the matrix convention.

In the above, the form $\langle,\rangle$ needs to be defined on $\fm$ only. We do not assume a metric on $\fk$, or we have much freedom to extend $\langle,\rangle$ to a left-invariant metric on $\rG$, see \cite[Lemma 11.24]{ONeil1983}. However, a construction that is often used is if $\fg$ itself is equipped with a nondegenerate bi-invariant form $\langle,\rangle$ such that its restriction to $\fk$ is nondegenerate, we can take $\fm$ to be the orthogonal complement to $\fk$ in \cref{lem:split}. Then, the restriction to $\fm$ is also nondegenerate, and both \cref{eq:ReductiveInv} and \cref{eq:NatReductiveInv} are satisfied, giving us a naturally reductive metric on $\cM = \rG/\rK$. This is essentially the construction that we use for the nonsingular case in the proof of \cref{theo:main}.


As mentioned in \cite[chapter 11.22]{ONeil1983}, naturally reductive metrics could be considered a generalization of the bi-invariant metrics to a quotient subspace. It has a simple expression for the Levi-Civita connection \cite[Theorem 3.3]{KobNom} and geodesics. In \cite[Proposition 2.12]{SmithThesis}, the author showed if $\rG$ is given a bi-invariant metric (which the proof assumes), the parallel transports on $\rG/\rK$ are also given by a simple equation. This will be the result we use, although we originally derived our results by working directly with the transport equation. 


\section{The Trace form as a bi-invariant symmetric bilinear form and transposable subalgebras}
For matrix Lie algebras, the most important examples of bi-invariant symmetric bilinear forms are trace forms \cite[Lemma 11.6]{ONeil1983}.
The {\it trace form} is the bilinear map on $\R^{n\times n}$, mapping $\ttA, \ttB\in \R^{n\times n}$ to the scalar $\langle \ttA, \ttB\rangle_{\scrK}:=\Tr\ttA\ttB$ (the subscript $\cK$ is in honor of Killing, who defined the closely related Killing form). Using the commutative property of trace, we have 
\begin{equation}\begin{gathered}\Tr (K\ttB K^{-1}) \ttC  =  \Tr\ttB (K^{-1}\ttC K)\quad\text{ for } K\in \GL(n), \ttB, \ttC\in \R^{n\times n},\\
\Tr[\ttA,\ttB]\ttC = \Tr\ttB[\ttC,\ttA]\quad \text{ for } \ttA, \ttB, \ttC\in \R^{n\times n},
\label{eq:cyclicInvariant}
\end{gathered}
\end{equation}
or the trace form is \emph{bi-invariant} on $\R^{n\times n}$ and on all matrix Lie subalgebras. As any strictly upper-triangular matrix $\ttA$ has $\Tr\ttA^2=0$, the trace form is not positive definite. A symmetric matrix $\ttA$ has $\Tr\ttA^2 \geq 0$, while an antisymmetric matrix has $\Tr\ttA^2 \leq 0$. However, if it is nondegenerate on a Lie algebra $\fg$ of a Lie group $\rG$, the trace form induces a bi-invariant pseudo-Riemannian metric on $\rG$ with closed-form geodesics.

The {\it Frobenius form}, or Frobenius inner product $(\ttA, \ttB)\mapsto \langle \ttA, \ttB\rangle_{\scrF} := \Tr\ttA\ttB^{\sfT}$ of two matrices is positive-definite. However, in general, it is not bi-invariant and may not induce a metric with closed-form geodesics. For subspaces that transpose to themselves, the positive-definite property of Frobenius forms can be used to prove nondegeneracy of subspaces equipped with the bi-invariant trace form. 
\begin{definition}A matrix vector space $\cV\subset\R^{n\times n}$ is called transposable if
  \begin{equation}\cV^{\sfT} = \{\ttA^{\sfT} |\ttA\in\cV\} = \cV.\end{equation}
A transposable Lie subalgebra is a transposable vector space that is a Lie subalgebra.
\end{definition}  
The spaces of symmetric, antisymmetric, diagonal matrices, and block-diagonal matrices with fixed block sizes are all transposable. If $\cV$ is transposable then its symmetric $\cV_{\sym}$ and antisymmetric $\cV_{\asym}$ parts are subspaces of $\cV$.

In the following, transposability implies nondegeneracy. This may be well-known in the reductive Lie algebra literature, but we do not have an exact reference.
\begin{proposition} If $\fg$ is transposable then the trace form is nondegenerate on $\fg$. Let $\fv\subset\fg$ be a transposable subspace. 
Let $\fvperp$ be the orthogonal complement of $\fv$ under the trace form $\langle \ttA, \ttB \rangle_{\scrK}= \Tr \ttA\ttB$
\begin{equation}\fvperp = \{ \ttA\in \fg |\; \Tr\ttA\fv = \{0\}\}.
\end{equation}
Then $\fvperp$ is also transposable and is the orthogonal complement under the Frobenius form $\langle \ttA, \ttB \rangle_{\scrF}= \Tr \ttA^{\sfT}\ttB$, and vice-versa. The projection $\cP_{\fv}$ of $\fg$ onto $\fv$ under $\langle,\rangle_{\scrK}$ or $\langle,\rangle_{\scrF}$ is identical and commutes with transpose, $\cP_{\fv}(\ttA^{\sfT}) = \cP_{\fv}(\ttA)^{\sfT}$ for $\ttA\in \fg$.
\end{proposition}
\begin{proof}Note for $\ttA_1, \ttA_2\in \fg$, $\langle \ttA_1, \ttA_2\rangle_{\scrF} = \langle \ttA_1^{\sfT}, \ttA_2\rangle_{\scrK}$, hence, if $\langle \ttA_1, \ttA_2\rangle_{\scrK}=0$ for all $\ttA_2\in \fg$, then $\Tr \ttA_1^{\sfT} \ttA_2=0$ for all $\ttA_2$, therefore $\ttA_1=0$. Thus, the trace form is nondegenerate on $\fg$.

The next statement is a consequence of $\fv=\fv^{\sfT}$, which implies $\fvperp=\{ \ttA\in \fg |\; \Tr\ttA\fv^{\sfT} = \{0\}\}$, or $\fvperp$ is also the orthogonal complement under the Frobenius inner product. Using $\Tr \ttA^{\sfT}\ttB = \Tr \ttA\ttB^{\sfT}$ for two matrices $\ttA, \ttB$, we get $\fvperp$ is transposable. From here, the decomposition $\ttA = \cP_{\fv}\ttA + \cP_{\fvperp}\ttA$ under the trace form is also the decomposition under the Frobenius form. Thus, the projection under one form is the projection under the other. Transposing the decomposition to $\ttA^{\sfT} = (\cP_{\fv}\ttA)^{\sfT} + (\cP_{\fvperp}\ttA)^{\sfT}$ then use the uniqueness of this decomposition, we get $\cP_{\fv}(\ttA^{\sfT}) = (\cP_{\fv}\ttA)^{\sfT}$.
\end{proof}

We now describe projections to several transposable subspaces that we will encounter. A projection to a block diagonal subspace is just setting the entries outside of those blocks to zero. A projection to the space of symmetric matrices is just by applying the symmetrize operator, $A\mapsto A_{\sym}$, projection to the space of antisymmetric matrices is by applying the antisymmetrize operator.  

\section{Pseudo-Riemannian deformation metric}
The main result of this section is if $\rG$ has a non-degenerate bi-invariant pseudo-Riemannian metric $\langle, \rangle$, and $\fa\subset\fg$ is a nondegenerate subalgebra, if we replace the bilinear form on $\fa$ with minus of another bi-invariant form $\langle, \rangle_{\rB}$ in $\fa$ while scaling the form on the complement $\faperp$ by a scalar $\beta_0\neq 0$, then the resulting left-invariant metric on $\rG$ will be naturally reductive (but may no longer be bi-invariant) and we still have closed-form geodesics and tractable parallel transport. Relative to the original form, the form $\langle, \rangle_{\rB}$ on $\fa$ is defined by a symmetric (with respect to $\langle, \rangle$) linear operator $\rB$ on $\fa$ only ($\rB\ttA$ only makes sense if $\ttA\in\fa$), called an intertwining operator. Thus, the new metric will be given by
\begin{equation} \langle \ttA, \ttA\rangle_{_{(\beta_0, \rB)}} := \beta_0\langle \ttpap(\ttA), \ttpap(\ttA)\rangle - \langle \ttpa(\ttA), \rB\ttpa(\ttA)\rangle. \label{eq:metCheegerAlt}
\end{equation}
We first discuss the condition for $\langle,\rangle_{\rB}$ to be bi-invariant.
\subsection{Intertwining operator}
We note that the most important examples of intertwining operators we use are simply multiplying by scalars, so in the procedure above, we just scale the metrics on $\fa$ and $\faperp$ by possibly different scalars. However, the general result requires roughly the same effort to prove, and potentially could have future applications.

If a Lie algebra $\fa$ has a nondegenerate bi-invariant symmetric form $\langle,\rangle$, the form is not unique, as any scalar multiple of $\langle,\rangle$ is also bi-invariant. If $\fa$ is a direct sum of two mutually commuting subalgebras $\fa = \fa_1\oplus \fa_2$, then the form $(\ttA, \ttB)\mapsto \beta_1 \langle \ttA_1, \ttB_1\rangle + \beta_2 \langle \ttA_2, \ttB_2\rangle, \ttA, \ttB\in \fa$ is also bi-invariant, where $\ttA = \ttA_1+\ttA_2, \ttB=\ttB_1+\ttB_2$ is the decomposition of $\ttA$ and $\ttB$ to the $\fa_1$ and $\fa_2$ components. Here, we apply different scalar factors on different components. We note if $\fa$ is commutative, $[ \ttA_1, \ttA_2] = 0$ for all $\ttA_1, \ttA_2\in \fa$ then any form is bi-invariant. These are the often used constructions of bi-invariant forms, see also \cite[Lemma 7.5,7.6]{Milnor1976}, \cite{medinaRevoy,MiolanePennec}.

We now give a condition for an operator to preserve the bi-invariant property. It satisfies the same condition as intertwining operators (morphism of the adjoint representation \cite[section II.6.1]{HumphreysLie}) in representation theory. Thinking of them as generalized scalars would probably be helpful. Most material presented here is likely known, but we could not locate the references.

Note that if $\langle,\rangle$ is non-degenerate, any other symmetric form could be written as $\langle \ttA,  \ttC\rangle_{\rB} := \langle \ttA, \rB \ttC\rangle$ ($\ttA, \ttC\in \fa$) for a symmetric operator $\rB$ (that means 
$\langle \rB \ttA,  \ttC\rangle = \langle \ttA, \rB \ttC\rangle$). Also note that $\rB$ is invertible if and only if $\langle , \rangle_{\rB}$ is non-degenerate.
\begin{definition}
Let $\fa$ be a Lie algebra with a nondegenerate bi-invariant form $\langle,\rangle$, then an intertwining operator $\rB$ is a symmetric operator on $\fa$ such that the form
\begin{equation} (\ttA_1, \ttA_2) \mapsto \langle \ttA_1, \ttA_2\rangle_{\rB} := \langle \ttA_1, \rB\ttA_2\rangle
\end{equation}
is also bi-invariant. 
\end{definition}
We have the following characterization of intertwining operators
\begin{proposition}A symmetric operator $\rB$ is an intertwining operator if and only if $\rB$ satisfies $\rB\ad_{\ttA} = \ad_{\ttA}\rB$ for all $\ttA\in \fa$. Equivalently, it is an intertwining operator if either of the following holds
\begin{gather}
\rB[\ttA_1, \ttA_2] = [\ttA_1, \rB\ttA_2] ,\label{eq:inter1}\\
\rB[\ttA_1, \ttA_2] = [\rB\ttA_1, \ttA_2]. \label{eq:inter2}
\end{gather}
\end{proposition}

\begin{proof}
For $\langle, \rangle_{\rB}$ to be bi-invariant, we need $\langle \ttA_1, [\ttA_2, \ttA_3]\rangle_\rB = \langle \ttA_2, [\ttA_3, \ttA_1]\rangle_\rB $, for $\ttA_1, \ttA_2, \ttA_3\in \fa$ or
$$ \langle \ttA_1, \rB[\ttA_2, \ttA_3]\rangle = \langle \ttA_2, \rB[\ttA_3, \ttA_1]\rangle 
$$
but the first term could be written as $\langle \rB \ttA_1, [\ttA_2, \ttA_3]\rangle =\langle \ttA_2 , [\ttA_3, \rB \ttA_1]\rangle$. This holds for all $\ttA_2$, and since $\langle,\rangle$ is nondegenerate, $[\ttA_3, \rB \ttA_1] = \rB[\ttA_3, \ttA_1]$, or $\rB$ commutes with $\ad_{\ttA_3}$
\begin{equation}\ad_{\ttA}\rB = \rB\ad_{\ttA}\quad \text{ for }\ttA\in \fa.\label{eq:intertwine}
\end{equation}
Finally, note the operator $\ttX\mapsto [\ttX, \ttA]$ is $-\ad_{\ttA}$, thus, if one of \cref{eq:inter1} or \cref{eq:inter2} holds for all $\ttA_1, \ttA_2$ in $\fa$, we have the other.
\end{proof}

In the case of $\oo(n)$, or of a class of Lie algebras called \emph{simple}, $\rB$ must be a scalar multiplication. Most intertwining operators in this paper are of this form. The following shows intertwining operators behave like scalars.
 
\begin{lemma}Let $\dI_{\fa}$ be the identity operator on $\fa$, then $\beta\dI_{\fa}, \beta\in \R$, is an intertwining operator. Sum, product, and inversion (if exists) of intertwining operators are intertwining operators.\label{lem:interAlg}
\end{lemma}
For example, if $\rB_1$ and $\rB_2$ are intertwining operators then
$$\ad_{\ttA}\rB_1\rB_2 = \rB_1\ad_{\ttA}\rB_2 = \rB_1\rB_2\ad_{\ttA},
$$
and $\ad_{\ttA}\rB(\rB^{-1}\ttA_1) = \rB\ad_{\ttA}(\rB^{-1}\ttA_1)$ implies $\rB^{-1}\ad_{\ttA}\ttA_1 = \ad_{\ttA}(\rB^{-1}\ttA_1)$, proving product and inversion producing new intertwining operators.
\subsection{Deformation using an intertwining operator on a subalgebra}
We will consider matrix groups in the following theorem to simplify the notation, but we believe it also holds for abstract Lie groups, replacing matrix multiplications with left and right multiplication actions.

\begin{proposition}\label{prop:main}
Let $\rG$ be a connected matrix Lie group with Lie algebra $\fg$ and a nondegenerate bi-invariant form $\langle,\rangle$. Let $\rA$ be a connected Lie subgroup of $\rG$ with Lie algebra $\fa \subset \fg$ such that $\langle,\rangle$ restricted to $\fa$ is nondegenerate. Let $\rB$ be an invertible intertwining operator on $\fa$. Write $\vbeta$ for the tuple $(\beta_0, \rB)$ where $\beta_0 \neq 0$ is a real number. For $\ttX_1, \ttX_2\in \fg$, define another bilinear form on $\fg$
\begin{equation}\langle \ttX_1, \ttX_2\rangle_{\vbeta} := \beta_0\langle \ttX_1, \ttX_2\rangle - \langle (\rB+ \beta_0\dI_{\fa})\ttpa\ttX_1, \ttpa\ttX_2\rangle.
\label{eq:metCheeger}
\end{equation}
It induces a left-invariant metric on $\rG$. Let $\trG $ be the component-wise product $\rG\times \rA$ and $\trH = \{ (H, H)\in \trG| H \in \rA\}$. 
Then the action $(X, A).Y= XYA^{-1}$ makes $\trG$ a group of isometries of $\rG$ with the metric \cref{eq:metCheeger}. The isotropy group at $\dI_{\rG}$ is $\trH$. Thus, we can identify $\rG$ with $\trG/\trH$. 
\end{proposition}
\begin{proof}First, \cref{eq:metCheeger} is equivalent to \cref{eq:metCheegerAlt} since $\cP_{\faperp} = \dI_{\fg} - \cP_{\fa}$, and $\langle \ttpa \ttX_1, \ttX_2 \rangle = \langle \ttpa \ttX_1, \ttpa\ttX_2 \rangle$. The form $\langle,\rangle$ is nondegenerate on $\faperp\subset \fg$ by \cref{lem:split}. If $\ttX_1$ is a degenerate element in \cref{eq:metCheegerAlt}, it pairs to zero with all $\ttX_2\in \fg$. For $\ttX_2$ in $\fa$, this implies $\ttpa\ttX_1=0$, while for $\ttX_2 \in \faperp$, we get $\cP_{\faperp}\ttX_1 = 0$. Together, we have $\ttX_1 =0$. Thus, the form \cref{eq:metCheeger} is nondegenerate.

By definition, the metric \cref{eq:metCheeger} is left invariant. To show for $A\in \rA$, $Y\mapsto YA^{-1}$ is an isometry on $\rG$, the first term of \cref{eq:metCheeger} is bi-invariance. We need to deal with the second term.
We have $\Ad_A$ commutes with $\ttpa$ since for $\ttX_1, \ttX_2\in \fg$, 
$$\begin{gathered}\langle \ttpa \Ad_A \ttX_1, \ttX_2  \rangle = \langle \Ad_A \ttX_1 , \ttpa \ttX_2  \rangle  = \langle  \ttX_1 , \Ad_{A^{-1}}\ttpa \ttX_2 \rangle 
= \langle  \ttpa\ttX_1 , \Ad_{A^{-1}}\ttpa \ttX_2 \rangle \\
= \langle \Ad_A \ttpa\ttX_1 , \ttpa \ttX_2  \rangle = \langle \Ad_A \ttpa\ttX_1 , \ttX_2  \rangle.
\end{gathered}$$
Thus, $\ttpa \Ad_A \ttX_1 = \Ad_A \ttpa\ttX_1$. Since $\rA$ is connected, by \cref{eq:expad}, $\rB$ commutes with $\Ad$. Hence, for $\xi, \eta\in T_Y\rG$, we have $\xi A^{-1}, \eta A^{-1}\in T_{YA^{-1}}\rG$ and we need to evaluate
$$\begin{gathered}\langle (\rB+ \beta_0\dI_{\fa})\ttpa\left((YA^{-1})^{-1}\xi A^{-1}\right), \ttpa\left((YA^{-1})^{-1}\eta A^{-1}\right)\rangle\\
= \langle (\rB+ \beta_0\dI_{\fa})\ttpa \left( A(Y^{-1}\xi) A^{-1}\right), \ttpa \left(A(Y^{-1}\eta) A^{-1}\right)\rangle \\
= \langle A(\rB+ \beta_0\dI_{\fa})\ttpa (Y^{-1}\xi) A^{-1}, A\ttpa (Y^{-1}\eta) A^{-1}\rangle
= \langle (\rB+ \beta_0\dI_{\fa})\ttpa (Y^{-1}\xi) , \ttpa (Y^{-1}\eta) \rangle
\end{gathered}$$
by $\Ad_A$ invariance. Thus, $\trG$ acts as an isometry. The isotropy group at $\dI_{\rG}$ consists of $(X, A)$ such that $XA^{-1}=\dI_{\rG}$, thus $X=A$, proving the isotropy group is $\trH$. Observe for $X\in \rG$ and $A\in \rA$, the action of $\trG$ by $(XA, A)$ sends $\dI_{\rG}$ to $XAA^{-1}=X$, thus the action is transitive. The inverse image at each $X$ is a copy of $\rA$, embedded as $\tilde{\rH}$. This identifies $\rG$ with $\trG/\trH$ as manifolds.
\end{proof}
\begin{theorem}\label{theo:main}The metric in \cref{prop:main} is naturally reductive when $\rG$ is identified with $\trG/\trH$. Specifically, let $\tfg = \fg\times \fa$ be the Lie algebra of $\trG$ and $\tfh$ be the Lie algebra of $\trH$. Let $\hrB := (\rB+\beta_0\dI_{\fa})\rB^{-1}$. Then
\begin{equation}\begin{gathered}
\tfh = \{ (\ttA, \ttA)\in \tfg| \ttA \in \fa\},\\
\tfm := \{(\ttX, \hrB\ttpa\ttX)|\ttX\in\fg \} \text{ with }\hrB = (\rB+\beta_0\dI_{\fa})\rB^{-1}
\label{eq:tfhm}
\end{gathered}
\end{equation}
are complementary subspaces, $\tfg = \tfm+\tfh$ and $\tfm\cap \tfh = \{0\}$ with the decomposition
\begin{equation}\begin{gathered}(\ttX, \ttA) = \left(\ttX
- (\dI_{\fa}+\beta_0^{-1}\rB)\ttpa\ttX + \beta_0^{-1}\rB\ttA
, -(\dI_{\fa}+\beta_0^{-1}\rB)\ttpa(\ttX-\ttA)\right)\\
+ \left(\ttA + (\dI_{\fa}+\beta_0^{-1}\rB)\ttpa(\ttX-\ttA) , \ttA + (\dI_{\fa}+\beta_0^{-1}\rB)\ttpa(\ttX-\ttA)\right).\label{eq:decompose}
\end{gathered}\end{equation}
The space $\tfm$ is $\Ad_{\trH}$ invariance. The following bilinear form on $\tfm$
\begin{equation}\langle (\ttX_1, \hrB\ttpa\ttX_1) , (\ttX_2, \hrB\ttpa\ttX_2)\rangle_{\tfm} := \beta_0\langle \ttX_1, \ttX_2 \rangle -\langle \beta_0^2\rB^{-1}(\dI_{\fa} +\beta_0^{-1}\rB) \ttpa\ttX_1, \ttpa\ttX_2\rangle
\label{eq:metricM}
\end{equation}
satisfies the naturally reductive condition, inducing the metric \cref{eq:metCheeger} on $\rG$. 

If $\dI_{\fa} +\beta_0^{-1}\rB$ is invertible then for $(\ttX_1, \ttA_1), (\ttX_2, \ttA_2)\in \fg\times\fa$, define the metric
\begin{equation}\langle (\ttX_1, \ttA_1), (\ttX_2, \ttA_2)\rangle_{\trG,\vbeta} := \beta_0\langle \ttX_1, \ttX_2\rangle - \langle \beta_0\rB(\rB +\beta_0\dI_{\fa})^{-1}\ttA_1, \ttA_2\rangle. \label{eq:metGH}
\end{equation}
It is bi-invariant on $\trG$ and restricts to \cref{eq:metricM}. The mapping $\fq:\trG\to\rG$ defined by $\fq: (X, A)\mapsto XA^{-1}\in\rG$ is a pseudo-Riemannian submersion between $\langle, \rangle_{\trG,\vbeta}$ and $\langle, \rangle_{\vbeta}$.
\end{theorem}
\begin{remark}
    1. Since $\rB$ is symmetric and $\ttpa$ is a projection, in \cref{eq:metCheeger}
    \begin{equation} \langle (\rB+ \beta_0\dI_{\fa})\ttpa\ttX_1, \ttpa\ttX_2\rangle =  \langle \ttpa\ttX_1, (\rB+ \beta_0\dI_{\fa})\ttpa\ttX_2\rangle 
=  \langle \ttX_1, (\rB+ \beta_0\dI_{\fa})\ttpa\ttX_2\rangle.\label{eq:Bbetaeq}
\end{equation}
    2. Note, when $\rB= \beta\beta_0\dI_{\fa}$, singularity of $\dI_{\fa} +\beta_0^{-1}\rB$ means $\beta=-\beta_0$, thus, the metric \cref{eq:metCheeger} is a multiple of the original bi-invariant metric. In this case, while nominally we deform using $\fa$, there is no actual deformation. For general $\rB$, with appropriate conditions, we can likely show if $\dI_{\fa} +\beta_0^{-1}\rB$  is singular, deformation is practically done on a subalgebra $\fa_1\subset \fa$. We will not use this unproven statement.\hfill\break
    3. The decomposition \cref{eq:decompose} and the form \cref{eq:metricM} on $\tfm$ are sufficient to define a metric on $\rG$ identified with $\trG/\trH$, the naturally reductive conditions 
    \cref{eq:ReductiveInv} and \cref{eq:NatReductiveInv} can be verified directly without assuming a metric on $\trG$. In actuality, we arrive at the decomposition and the metric on $\tfm$ assuming
    $\dI_{\fa} +\beta_0^{-1}\rB$ is nonsingular, following existing literature on the deformation metric \cite{Cheeger1973,Jensen,VMSil,ExtCurveStiefel,DAZ}, then observe that the decomposition and the metric on $\tfm$ are both well-defined even
    when $\dI_{\fa} +\beta_0^{-1}\rB$ is singular. We will present a proof of the theorem along this line, using the bi-invariance metric \cref{eq:metGH} then show the general case follows from a continuity argument.
\end{remark}

\begin{proof}From \cref{eq:tfhm}, if  $(\ttX, \ttA) \in \tfh\cap \tfm$ then  $\ttX=\ttA$ and $\hrB\ttA =\ttA$, but this shows $\beta_0\rB^{-1}\ttA=0$, thus $\ttX=\ttA=0$, or $\tfh\cap \tfm=\{0 \}$.

In \cref{eq:decompose}, it is clear we have the equality and that the second line is in $\tfh$. To check that the first term is in $\tfm$ so that \cref{eq:decompose} is a direct sum decomposition, we expand
$$\hrB(\ttpa\ttX
- (\dI_{\fa}+\beta_0^{-1}\rB)\ttpa\ttX + \beta_0^{-1}\rB\ttA) = (\rB+\beta_0\dI_{\fa})\rB^{-1}(
- (\beta_0^{-1}\rB)\ttpa\ttX + \beta_0^{-1}\rB\ttA),$$
which simplifies to $-(\dI_{\fa}+\beta_0^{-1}\rB)\ttpa(\ttX-\ttA)$, the $\rA$-component of the first line.

Applying partial derivative rule on $XA^{-1}$, the differential $d\fq_{(X,A)}$ of $\fq$ applied on direction $(\eta^{\rG}, \eta^{\rA})\in T_{(X, A)}\trG$ is given by
\begin{equation}d\fq_{(X,A)}: (\eta^{\rG}, \eta^{\rA})\mapsto X(X^{-1} \eta^{\rG} -  A^{-1}\eta^{\rA})A^{-1} = XA^{-1}\Ad_A(X^{-1} \eta^{\rG} -  A^{-1}\eta^{\rA}).\label{eq:dqCheeger}\end{equation}
In particular, at $\dI_{\trG}$, it maps $(\eta^{\rG}, \eta^{\rA})$ to $\eta^{\rG} -\eta^{\rA}$. Thus, for $\ttX\in \fg$, $d\fq_{_{\dI_{\trG}}}:(\ttX, \hrB\ttpa\ttX) \mapsto \ttX - (\rB + \beta_0\dI_{\fa} )\rB^{-1}\ttpa\ttX\in \fg$. Considered as a map from $\tfm$ to $\fg$, its inverse applied on $\ttZ \in\fg$ is the \emph{horizontal lift} of $\ttZ$, given by the element $\tilde{\ttZ}\in \tilde{\fm}$ below
\begin{equation}\tilde{\ttZ} =
(\ttZ - (\dI_{\fa}+\beta_0^{-1}\rB)\ttpa\ttZ, -(\dI_{\fa}+\beta_0^{-1}\rB)\ttpa\ttZ) .
\label{eq:horLiftIntert}\end{equation}
To see this, it is immediate that $d\fq_{_{\dI_{\trG}}}\tilde{\ttZ} =\ttZ$. Let $(\ttZ^{\rG}, \ttZ^{\rA})$ be the components of $\tilde{\ttZ}$
\begin{equation}\tilde{\ttZ} = (\ttZ^{\rG}, \ttZ^{\rA}) = (\ttZ + \ttZ^{\rA}, \ttZ^{\rA});\quad \ttZ^{\rA} = -(\dI_{\fa}+\beta_0^{-1}\rB)\ttpa\ttZ.\label{eq:xiGA}
\end{equation}
We show $\tilde{\ttZ}$ is in $\tfm$ by verifying $\hrB\ttpa(\ttZ + \ttZ^{\rA}) = \ttZ^{\rA}$ by simple algebra
$$(\rB+\beta_0\dI_{\fa})\rB^{-1} \ttpa(\ttZ -(\dI_{\fa}+\beta_0^{-1}\rB)\ttpa\ttZ)=(\rB+\beta_0\dI_{\fa})\rB^{-1} ( -(\beta_0^{-1}\rB)\ttpa\ttZ)=\ttZ^{\rA}.
$$
A direct proof of natural reductivity is to expand the $\tfm$-form \cref{eq:metricM} for $\tilde{\ttZ}$ and verify it simplifies to \cref{eq:metCheeger}, and similarly verifying $\Ad_{\trH}$-invariance and \cref{eq:NatReductiveInv}  directly. However, as discussed, we give a more comprehensive proof using the metric \cref{eq:metGH} below.

When $\dI_{\fa} +\beta_0^{-1}\rB$ is nonsingular, the metric \cref{eq:metGH} is bi-invariant on $\trG$, as the $\rG$-component is invariant by the bi-invariance of $\fg$, while the $\rA$-component is bi-invariant by \cref{lem:interAlg} (for example, it reduces to $(\ttA_1, \ttA_2)\mapsto \frac{\beta}{\beta + 1}\langle \ttA_1, \ttA_2\rangle$ if $\rB = \beta\beta_0\dI_{\fa}$). When $\dI_{\fa} +\beta_0^{-1}\rB$ is singular, the metric is not defined on $\trG$. However, the form \cref{eq:metricM} on $\tfm$ is still well-defined and continuous in the parameters. Therefore, if the naturally reductive conditions on $\tfm$ hold for nonsingular $\dI_{\fa} +\beta_0^{-1}\rB$, the conditions carry to singular values by continuity. Hence, it suffices to prove the nonsingular case, where natural reductivity follows from the bi-invariance of \cref{eq:metGH}, since both conditions \cref{eq:ReductiveInv} and \cref{eq:NatReductiveInv} are instances of the bi-invariance condition.

First, we need to show the decomposition \cref{eq:decompose} is the orthogonal decomposition under \cref{eq:metGH}. The orthogonal complement of $\tfh$ under the metric \cref{eq:metGH} consists of pairs $(\ttX, \ttA)\in \fg\times\fa$ satisfying $$\beta_0\langle \ttX, \ttA_0\rangle - \langle \beta_0\rB(\rB +\beta_0\dI_{\fa})^{-1}\ttA, \ttA_0\rangle =0
$$
for all $\ttA_0\in\fa$. Since $\langle \ttX, \ttA_0\rangle = \langle \ttpa\ttX, \ttA_0\rangle$, the above reduces to $\ttpa\ttX-\rB(\rB +\beta_0\dI_{\fa})^{-1}\ttA =0$, 
or $\ttpa\ttX=\rB(\rB +\beta_0\dI_{\fa})^{-1}\ttA$, thus $\ttA = \hrB\ttpa \ttX$. Hence, the orthogonal complement of $\tfh$ is $\tfm$, and the metric \cref{eq:metGH} is nondegenerate on $\tfh$ and $\tfm$ by \cref{lem:split}.

We now verify the metric \cref{eq:metricM} is the restriction of \cref{eq:metGH} to $\tfm$
$$\begin{gathered}\langle (\ttX, \hrB\ttpa\ttX), (\ttX, \hrB\ttpa\ttX) \rangle_{\trG, \vbeta} = \beta_0\langle \ttX, \ttX\rangle - \langle
\beta_0\rB(\rB +\beta_0\dI_{\fa})^{-1}\hrB\ttpa\ttX , \hrB\ttpa\ttX
\rangle\\
=\beta_0\langle \ttX, \ttX\rangle - \langle
\beta_0\rB(\rB +\beta_0\dI_{\fa})^{-1} (\rB+\beta_0\dI_{\fa})\rB^{-1}\ttpa\ttX ,  (\rB+\beta_0\dI_{\fa})\rB^{-1}\ttpa\ttX
\rangle\\
=\beta_0\langle \ttX, \ttX\rangle - \langle
\beta_0\ttpa\ttX ,  (\rB+\beta_0\dI_{\fa})\rB^{-1}\ttpa\ttX
\rangle,
\end{gathered}$$
which is \cref{eq:metricM}. To show $d\fq$ is a submersion, let $\tilde{\ttX}\in \tilde{\fm}$ be the lift \cref{eq:horLiftIntert} of $\ttX$, then
$$\begin{gathered}\langle \tilde{\ttX}, \tilde{\ttX}\rangle_{\trG, \vbeta} = \beta_0\langle \ttX - (\dI_{\fa} + \beta_0^{-1}\rB)\ttpa\ttX, \ttX - (\dI_{\fa} +\beta_0^{-1}\rB)\ttpa\ttX\rangle \\
- \langle \beta_0\rB(\rB +\beta_0\dI_{\fa})^{-1}(-(\dI_{\fa} + \beta_0^{-1}\rB)\ttpa\ttX), -(\dI_{\fa}+\beta_0^{-1}\rB)\ttpa\ttX\rangle\\
= \beta_0\langle \ttX, \ttX \rangle
-2\beta_0\langle \ttpa\ttX, (\dI_{\fa} +\beta_0^{-1}\rB)\ttpa\ttX \rangle + \beta_0\langle (\dI_{\fa} +\beta_0^{-1}\rB)\ttpa\ttX, (\dI_{\fa} +\beta_0^{-1}\rB)\ttpa\ttX\rangle\\
- \langle \rB\ttpa\ttX, (\dI_{\fa}+\beta_0^{-1}\rB)\ttpa\ttX\rangle\\
= \beta_0\langle \ttX, \ttX \rangle -\langle 2\beta_0\ttpa\ttX - \beta_0(\dI_{\fa} +\beta_0^{-1}\rB)\ttpa\ttX + \rB\ttpa\ttX, (\dI_{\fa} +\beta_0^{-1}\rB)\ttpa\ttX \rangle \\
=\beta_0\langle \ttX , \ttX \rangle -\beta_0 \langle \ttpa\ttX,  (\dI_{\fa}+\beta_0^{-1}\rB)\ttpa\ttX\rangle, 
\end{gathered}$$
where between the third to last and the second to last expression, we factor out $\langle , (\rB+\beta_0\dI_{\fa})\rB^{-1}\ttpa\ttX \rangle$, then simplify to get to the last expression, 
which is the metric in \cref{eq:metCheeger}. We have previously showed $d\fq_{_{\dI_{\trG}}}$ maps $\tfm$ bijectively onto $\fg$ with the inverse given by the lift \cref{eq:horLiftIntert}. In particular, $d\fq_{_{\dI_{\trG}}}$ is surjective, and the invariance of the metric shows it is surjective at any point in $\trG$. Thus, $\fq$ is a submersion. Since \cref{eq:metricM} corresponds to the nondegenerate form \cref{eq:metCheeger} except for a singular pair $(\beta_0, \rB)$, it corresponds everywhere by continuity. All the required identities hold, hence \cref{eq:metCheeger} is naturally reductive.
\end{proof}
\begin{theorem}\label{theo:mainChris}
With the metric \cref{eq:metCheeger}, the Christoffel function of two tangent vectors $\xi, \eta$ at $X\in \rG$ is given by
\begin{equation}\begin{gathered}
\Gamma(\xi, \eta) =  -\frac{1}{2}(\xi X^{-1}\eta+\eta X^{-1}\xi) \\
    + \frac{1}{2}X\left([(\dI_{\fa}+\beta_0^{-1}\rB)\ttpa(X^{-1}\xi),X^{-1}\eta] + [(\dI_{\fa}+\beta_0^{-1}\rB)\ttpa(X^{-1}\eta),X^{-1}\xi] \right).
\end{gathered}\label{eq:ChristoffelCheeger}
\end{equation}
The geodesic starting at $X\in\rG$ with initial velocity $\xi\in T_X\rG$ is given by
\begin{equation}\gamma(t) = X\expm(t\left(-(\dI_{\fa}+\beta_0^{-1}\rB)\ttpa(X^{-1}\xi) + X^{-1}\xi\right))\expm(t(\dI_{\fa}+\beta_0^{-1}\rB)\ttpa(X^{-1}\xi)  ).\label{eq:geodesicCheeger}\end{equation}
  Set $\ttV = X^{-1}\xi$, let $\eta\in T_X\cM$ be another tangent vector, then the parallel transport of $\eta$ along the geodesics $\gamma$ to $X_t=\gamma(t)$ with initial point and velocity $(X, \xi)$ is
  \begin{equation}\begin{gathered}
    \ttT(\gamma)_{0}^t\eta = X\expm(t\left(-(\dI_{\fa}+\beta_0^{-1}\rB)\ttpa\ttV + \ttV\right))\expa(t\rP_{\ttV}, X^{-1}\eta)\expm(t(\dI_{\fa}+\beta_0^{-1}\rB)\ttpa\ttV  )\\
    \text{ where for }\ttB\in \fg, \rP_{\ttV}: \ttB \mapsto \frac{1}{2}\left( [\ttB, \ttV] + [(\dI_{\fa}+\beta_0^{-1}\rB)\ttpa\ttV, \ttB]-[(\dI_{\fa}+\beta_0^{-1}\rB)\ttpa\ttB, \ttV]\right).\label{eq:transportCheeger}
\end{gathered}    
\end{equation}
The map $\rP_{\ttV}$ is antisymmetric in $\langle, \rangle_{\vbeta}$, $\langle \rP_{\ttV}\ttB, \ttC\rangle_{\vbeta} + \langle \rP_{\ttV}\ttC, \ttB\rangle_{\vbeta} =0$ for $\ttV, \ttB, \ttC\in \fg$.
\end{theorem}
\begin{proof}We will assume $\dI_{\fa}+\beta_0^{-1}\rB$ is invertible, then use continuity as before.

We start with the geodesics. For the geodesic at point $X\in \rG$ with initial velocity $\xi$, we lift $X$ to $(X, \dI_{\rA})$. Then the geodesic $(X(t), A(t))$ on $\trG$ using the horizontal lift of $X^{-1}\xi$ in \cref{eq:horLiftIntert} is given by
$$\left(X\expm\left(t(X^{-1}\xi - (\dI_{\fa} + \beta_0^{-1}\rB)\ttpa(X^{-1}\xi))\right), \expm \left(t\left(-\beta_0^{-1}(\rB+\beta_0\dI_{\fa})\ttpa(X^{-1}\xi)\right)\right)\right)
$$
which maps by the submersion $\fq$ to the geodesic \cref{eq:geodesicCheeger}.

We will compute the connection at $\dI_{\rG}$ then use the invariance property. For the geodesic starting at $X(0) = \dI_{\rG}$ with initial velocity $\dot{X}(0) = \xi$ of the form $X(t) = \expm(t(-M_1+M_2))\expm(tM_1)$ for $M_1, M_2\in \fg$, we have $\dot{X}(t) = \expm(t(-M_1+M_2)(M_2)\expm(tM_1)$ and 
$$\ddot{X}(0) =- \Gamma(\xi, \xi)= (-M_1+M_2)M_2+M_2M_1 = -[M_1, M_2]+M_2^2.$$
We thus get $\Gamma(\xi, \xi) = -\xi^2 + [(\dI_{\fa}+\beta_0^{-1}\rB)\ttpa(\xi), \xi]$ and \cref{eq:ChristoffelCheeger} follows by polarization and left invariance.

For parallel transport, we also start with $X=\dI_{\rG}$ then use left invariance. From \cite[Proposition 2.12]{SmithThesis}, the lifted parallel transport from $(\dI_{\rG}, \dI_{\rA})$ is $
(\eta(t)^{\rG},\eta(t)^{\rA}) = (\expm(t(\xi+\xi^{\rA}))\ttC^{\rG}, \expm(t\xi^{\rA})\ttC^{\rA})$ where $(\ttC^{\rG}(t), \ttC^{\rA}(t))\in \fg\times \fa$ solves
$$\begin{gathered}(\dot{\ttC}^{\rG}, \dot{\ttC}^{\rA} ) = \cP_{\tilde{\fm}}(-\frac{1}{2}[\xi + \xi^{\rA},  \ttC^{\rG}], -\frac{1}{2}[\xi^{\rA}, \ttC^{\rA}]),\\
(\ttC^{\rG}(0), \ttC^{\rA}(0)) = (\eta + \eta^{\rA}, \eta^{\rA}).
\end{gathered}$$
Since both the initial condition and the equation are in $\tilde{\fm}$, $(\ttC^{\rG}, \ttC^{\rA})\in\tilde{\fm}$. Using \cref{eq:dqCheeger}, the lifted transport maps to the transport on $\rG$ as
$$X(t)(X(t)^{-1}\eta(t)^{\rG} -A(t)^{-1}\eta(t)^{\rA} )A(t)^{-1} = \expm(t(\xi + \xi^{\rA}))(\ttC^{\rG} - \ttC^{\rA} ) \expm( -t\xi^{\rA}).
$$
Let $F(t) = \ttC^{\rG}(t) - \ttC^{\rA}(t)$, thus, $(\ttC^{\rG}(t), \ttC^{\rA}(t))$ is the lift of $F(t)$ by $d\fq_{(\dI_{\rG}, \dI_{\rA})}$. We have $F(0) =\eta$, and by \cref{eq:horLiftIntert}, $\rC^{\rG}= F+\rC^{\rA} , \rC^{\rA} = - (\dI_{\fa}+ \beta^{-1}_0\rB)\ttpa F$. The ODE for $F$ is
$$\begin{gathered}\dot{F} = \dot{\ttC}^{\rG}- \dot{\ttC}^{\rA} = -\frac{1}{2}\left([\xi+\xi^{\rA}, F+\rC^{\rA}] - [\xi^{\rA}, \rC^{\rA}]  \right) \\
= -\frac{1}{2}\left([\xi, F] - [\xi, (\dI_{\fa} + \beta_0^{-1}\rB)\ttpa F] - [(\dI_{\fa} + \beta_0^{-1}\rB)\ttpa\xi, F]\right).
\end{gathered}$$
Thus, $F(t)=\expa(t\rP_{\xi}, \eta)$, giving us the parallel transport formula \cref{eq:transportCheeger} for $X=\dI_{\rG}$. To show $\rP_{\ttV}$ (where $\ttV = \xi$) is antisymmetric in $\langle,\rangle_{\vbeta}$, set $R=\dI_{\fa} + \beta_0^{-1}\rB$, we calculate
$$\begin{gathered}\frac{2}{\beta_0}\langle\rP_{\ttV}\ttB, \ttC \rangle_{\vbeta} =  \langle[\ttB, \ttV]+ [R\ttpa\ttV, \ttB]- [R\ttpa\ttB, \ttV],\ttC\rangle    - \langle [\ttB, \ttV] + [R\ttpa\ttV, \ttB]-[R\ttpa\ttB, \ttV], R\ttpa \ttC\rangle
    \end{gathered}$$
from \cref{eq:Bbetaeq}. Expand the right hand side and transform $\langle [R\ttpa\ttV, \ttB], R\ttpa\ttC\rangle$ to
$$\begin{gathered}\langle \ttB, [R\ttpa\ttC,R\ttpa\ttV ]\rangle = \langle \ttpa\ttB, [R\ttpa\ttC,R\ttpa\ttV ]\rangle 
= \langle \ttpa\ttB, R[R\ttpa\ttC,\ttpa\ttV ]\rangle  \\
= \langle R\ttpa\ttB, [R\ttpa\ttC, \ttpa\ttV]\rangle 
= \langle \ttpa\ttV,  [R\ttpa\ttB, R\ttpa\ttC]\rangle 
=\langle \ttV,  [R\ttpa\ttB, R\ttpa\ttC]\rangle.
\end{gathered}$$
Rearrange the other terms in $\frac{2}{\beta_0}\langle\rP_{\ttV}\ttB, \ttC \rangle_{\vbeta}$ to isolate $\ttV$ to one side using bi-invariance, 
$$\begin{gathered}
\frac{2}{\beta_0}\langle\rP_{\ttV}\ttB, \ttC \rangle_{\vbeta} =
  \langle \ttV, [\ttC, \ttB]\rangle + \langle R\ttpa\ttV, [\ttB, \ttC]\rangle-\langle \ttV, [\ttC, R\ttpa\ttB]\rangle
- \langle \ttV, [R\ttpa\ttC, \ttB]\rangle \\
- 2\langle\ttV, [R\ttpa\ttB, R\ttpa\ttC ] \rangle
\end{gathered}$$
Thus, by anticommutativity of the Lie bracket, we have $\langle\rP_{\ttV}\ttB, \ttC \rangle_{\vbeta} + \langle\rP_{\ttV}\ttC, \ttB \rangle_{\vbeta} = 0$.
\end{proof}

\subsection{Riemannian metrics from the family}\label{sec:riemannFam}We will assume $\langle, \rangle$ is given by the trace form, and the algebras are transposable. The case $\rB=-\beta_0\dI_{\alpha}$ means the metric \cref{eq:metCheeger} is bi-invariant, a multiple $\beta_0\Tr \ttA_1\ttA_2$ of the trace form. In \cref{theo:mainChris}, the connection $\Gamma(\xi, \eta)$ reduces to the first two terms, the geodesic is $X\expm(tX^{-1}\xi)$, or just $\expm(t\xi)$ if $X=\dI_{\cE}$, the operator $\rP_{\ttV}$ is just $-\frac{1}{2}\ad_{\ttV}$, and using \cref{eq:expad}, the parallel transport is $X\expm(\frac{t}{2}\ttV)X^{-1}\eta\expm(\frac{t}{2}\ttV)$. This is known \cite{SmithThesis}.

However, the trace form metric may not be Riemannian. Using \cref{theo:main} to deform the trace form using a Lie algebra $\fa$, for certain choices of the subgroup $\fa$ and parameters $\vbeta$, the metric becomes Riemannian. Let $\sfT$ be the transpose operator, $\sfT: A\mapsto A^{\sfT}$. Compared with the Frobenius form, the deformation metric becomes
\begin{gather}\langle \ttA, \ttB\rangle_{\vbeta} = \langle \ttA, \cI_{\vbeta}\ttB\rangle_{\scrF},\\
\cI_{\vbeta}:= \sfT\circ \left(\beta_0(\rI_{\cE} - \ttpa) - \rB\circ\ttpa\right)
= \left(\beta_0(\rI_{\cE} - \ttpa) -\rB\circ\ttpa\right)\circ \sfT.
\end{gather}
In the above, we identify an element in the range of $\rB$ with an element of $\fg$.

We will discuss a more general example in \cref{sec:GL}, but will focus on the case $\rB = \beta_1\dI_{\fa}$  otherwise. The transpose $\sfT$ commutes with $\ttpa$ since we consider transposable algebras. We will study the eigenvalues of $\cI_{\vbeta} = \sfT\circ \left(\beta_0(\rI_{\cE} - \ttpa) -\beta_1\ttpa\right)$.

Note that the transpose $\sfT$ has eigenvalues $\pm 1$ on the spaces of symmetric and antisymmetric matrices. Write $\cV_{\sym}$ and $\cV_{\asym}$ for the symmetric and antisymmetric components of a transposable subspace $\cV\subset \cE$, the possible eigenpairs of $\cI_{\vbeta}$ are $(\beta_0, (\faperp)_{\sym}), (-\beta_0, (\faperp)_{\asym}), (-\beta_1, \fa_{\sym}), (\beta_1, (\fa)_{\asym})$. The positive-definite requirement of a Riemannian metric implies at least one of the first two, and at least one of the last two eigenspaces are trivial, assuming the appropriate signs of $\beta_0$ and $\beta_1$. These are the possible cases
\begin{enumerate}\item $(\faperp)_{\sym}=\{0\}, (\fa)_{\sym}=\{0\}$. Thus, $\fg_{\sym}=\{0\}$, or $\rG$ is a compact Lie subgroup of $\SO(n)$. In this case, $\beta_0 <0, \beta_1 > 0$. An example is $\SO(n)$ itself in \cref{sec:SOSE}.
\item  $(\faperp)_{\sym}=\{0\}, (\fa)_{\asym}=\{0\}$. For example, $\rG$ could be a product of a compact subgroup (for $\faperp$)  and a diagonal group (for $\fa$). We need $\beta_0 <0, \beta_1 < 0$.
\item  $(\faperp)_{\asym}=\{0\}, (\fa)_{\sym}=\{0\}$. This shows $\fa=\fg_{\asym}$.
  An example is $\GL(n)$ in \cite{VAV,MartinNeff}, reviewed under our point of view with parallel transport in \cref{sec:GL}. The case of $\Spl(n)$ in \cite{Fiori} could be treated similarly. In this case $\beta_0 > 0, \beta_1>0$.\label{item:GL} Except for the case $\fg_{\sym}=\{0\}$ already considered, the trace form is indefinite, it is positive on $\fg_{\sym}$ and negative on $\fg_{\asym}$. The deformation metric \cref{eq:metCheeger} modifies the trace form on the antisymmetric part by a negative multiplier $-\beta_1$. This is also known in \cite{Barp2019}.
\item  $(\faperp)_{\asym}=\{0\}, (\fa)_{\asym}=\{0\}$. In this case, $\fg\subset\cE_{\sym}$, an example is a group of commutative symmetric matrices, or equivalently, a group of the form $U\cD U^{\sfT}$ where $U$ is a fixed orthogonal matrix and $\cD$ is a diagonal group.
\end{enumerate}  
If $\rB$ is not restricted to the form $\beta\beta_0\dI_{\fa}$, there are more possibilities, we discuss an example in \cref{sec:GL}. We will not be able to discuss an example where the bilinear form is not a trace form.
\begin{table}
\begin{tabular}{c|l|l|c}
\toprule
$\fa$ and $\faperp$  &  $\beta_0$ & $\beta_1$ & Examples \\
\midrule
 $(\faperp)_{\sym} = \{0\}$, $\fa_{\sym} = \{0\}, \fg_{\sym} =\{0\}$ & $-$ & $+$ & $\rA\subset \rG\subset \SO(n)$.\\
 $(\faperp)_{\sym} = \{0\}, \fa_{\asym} = \{0\} $ &$-$ &$-$ & $\cD =\rA \subset \rG = \SO(n)\times \cD$. \\
 $(\faperp)_{\asym} = \{0\}, \fa_{\sym} = \{0\} $ & $+$ & $+$ & $\SO(n)=\rA \subset \rG =  \GL(n)$.\\
 $(\faperp)_{\asym} = \{0\}, \fa_{\asym} = \{0\}, \fg_{\asym} =\{0\} $ & $+$&$-$ & $\rA\subset \rG = U\cD U^{\sfT}$, $UU^{\sfT}= \dI_n$.\\
\bottomrule
\end{tabular}  
\caption{Summary of Riemannian left-invariant metrics obtained by deforming a transposable algebra $\fg$ using a transposable $\fa\subset \fg$. We assume $\cD$ is a diagonal group.
}
\label{tab:beta}
\end{table}

\section{The General Linear group with positive determinant}\label{sec:GL}
In this case, the group is $\rG=\GL^+(n)$, the group of invertible matrices with positive determinant in $\R^{n\times n}$, with Lie algebra $\fg=\gl(n)=\R^{n\times n}$. We take $\fa = \oo(n)$, the Lie algebra of antisymmetric matrices. For $\ttX\in\fg$, we have $\ttpa \ttX = \ttX_{\asym}$. We typically take $\beta_0=1$, and consider $\rB = \beta\dI_{\fa}$. For $\ttX\in\fg$, the metric becomes
\begin{equation}\Tr\ttX^2 - (\beta+1)\Tr\ttX_{\asym}\ttX_{\asym} = \Tr\ttX^2_{\sym} + \beta\Tr\ttX_{\asym}^{\sfT}\ttX_{\asym} = \lvert \ttX\rvert_{\cF}^2 + (\beta-1)\lvert \ttX_{\asym}\rvert_{\cF}^2
  \label{eq:glnmetric}
\end{equation}
using the decomposition $\ttX=\ttX_{\sym} + \ttX_{\asym}$. We need $\beta > 0$ for the metric to be Riemannian. When $\beta = 1$, this is the Frobenius form, covered in \cite{VAV}.
Expanding the $\mathrm{skew}$ operator, \cref{theo:mainChris} implies
\begin{proposition}With the metric \cref{eq:glnmetric}, the Levi-Civita connection is given by
  \begin{equation}
    \Gamma(\xi, \eta) = -\frac{1}{2}(\xi X^{-1}\eta + \eta X^{-1}\xi)
     + \frac{1+\beta}{4}X([X^{-1}\xi, (X^{-1}\eta)^{\sfT}] + [X^{-1}\eta, (X^{-1}\xi)^{\sfT}]).
  \end{equation}
  The geodesic starting at $X$ with initial velocity $\xi=X\ttV$ is given by
  \begin{gather}\gamma(t) = X \exp(\frac{t}{2}\left((1-\beta)\ttV+(1+\beta)\ttV^{\sfT}\right))\exp(t(1+\beta)\ttV_{\asym}).
  \end{gather}
  Parallel transport of a tangent vector $\eta$ at $X$ along $\gamma(t)$ is given by
  \begin{equation}
    \ttT_0^t(\gamma)\eta = X\exp(\frac{t}{2}\left((1-\beta)\ttV+(1+\beta)\ttV^{\sfT}\right))\expa(t \rP,  X^{-1}\eta)\exp(t(1+\beta)(\ttV_{\asym})),
  \end{equation}
  where the operator $\rP = \rP_{\ttV}$ is given by
  \begin{equation}\rP_{\ttV}\ttB = \frac{1}{2}([\ttB,\ttV]+(1+\beta)([\ttV_{\asym},\ttB] -[\ttB_{\asym},\ttV])).
  \end{equation}
\end{proposition}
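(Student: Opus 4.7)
The plan is to observe that this proposition is a direct specialization of \cref{theo:main} with $\rG=\GL^+(n)$, $\fg=\gl(n)=\R^{n\times n}$, $\fa=\oo(n)$, $\beta_0=1$ and $\beta_1=\beta$. First I would verify that the hypotheses of \cref{theo:main} are met. The ambient algebra $\gl(n)=\R^{n\times n}$ is trivially transposable, and $\oo(n)$ is a transposable Lie subalgebra since $\tta^{\sfT}=-\tta\in\oo(n)$ for $\tta\in\oo(n)$, while $[\tta,\ttb]^{\sfT}=[\ttb^{\sfT},\tta^{\sfT}]=[\tta,\ttb]$ lies in $\oo(n)$ up to sign. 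In this setup the projection $\ttpa$ coincides with the antisymmetrize operator $\asym$, since $\oo(n)^{_{\perp}}=\Herm_n$ under the Frobenius form.

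Next I would verify the metric formula \cref{eq:glnmetric}. Substituting $\ttg_{\fa}=\ttg_{\asym}$ and $\tth=\ttg$ into \cref{eq:algebrametric}, using $\Tr\ttg_{\sym}\ttg_{\asym}=0$ and $\Tr\ttg_{\asym}^2=-|\ttg_{\asym}|_{\cF}^2$, gives $\Tr\ttg^2-(1+\beta)\Tr\ttg_{\asym}^2=|\ttg_{\sym}|_{\cF}^2+\beta|\ttg_{\asym}|_{\cF}^2=|\ttg|_{\cF}^2+(\beta-1)|\ttg_{\asym}|_{\cF}^2$, matching the stated expression.

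For the Christoffel function, the only nontrivial simplification is the $\fa$-dependent term in \cref{eq:connection}. Setting $\ttb=X^{-1}\xi$, $\ttc=X^{-1}\eta$, I would apply the identity
\begin{equation*}
[\ttb_{\asym},\ttc]+[\ttc_{\asym},\ttb]=\tfrac{1}{2}\bigl([\ttb,\ttc]-[\ttb^{\sfT},\ttc]+[\ttc,\ttb]-[\ttc^{\sfT},\ttb]\bigr)=\tfrac{1}{2}\bigl([\ttc,\ttb^{\sfT}]+[\ttb,\ttc^{\sfT}]\bigr),
\end{equation*}
which converts the coefficient $\tfrac{1+\beta}{2}$ from \cref{eq:connection} into the claimed $\tfrac{1+\beta}{4}$ and produces the stated bracket expression.

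For the geodesic, I would simplify the exponent $-(1+\beta)\tta_{\fa}+\tta$ in \cref{eq:geodesicmain} with $\tta_{\fa}=\tta_{\asym}=\tfrac{1}{2}(\tta-\tta^{\sfT})$ and $\tta=\tta_{\sym}+\tta_{\asym}$, obtaining $\tta_{\sym}-\beta\tta_{\asym}=\tfrac{1}{2}\bigl((1-\beta)\tta+(1+\beta)\tta^{\sfT}\bigr)$, which is the exponent in the stated $\gamma(t)$ (under the standard convention identifying $\xi$ with $X^{-1}\xi$ via left translation). The parallel transport formula and the expression for $\rP_{\tta}$ follow immediately from \cref{eq:transportmain} by the same substitution $\tta_{\fa}\mapsto\tta_{\asym}$, $\ttb_{\fa}\mapsto\ttb_{\asym}$. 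Since the bulk is bookkeeping, no real obstacle arises; the only place requiring care is the $\sym$/$\asym$ rewriting of the bracket term in the Christoffel function, which I would do by the short identity above.
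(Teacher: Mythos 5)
Your proposal is correct and is exactly the route the paper takes: the paper gives no separate proof, simply deriving the proposition by specializing \cref{theo:main} to $\fg=\gl(n)$, $\fa=\oo(n)$, $\beta_0=1$, $\beta_1=\beta$, and your identity $[\ttb_{\asym},\ttc]+[\ttc_{\asym},\ttb]=\tfrac{1}{2}([\ttb,\ttc^{\sfT}]+[\ttc,\ttb^{\sfT}])$ together with $-(1+\beta)\tta_{\asym}+\tta=\tfrac{1}{2}((1-\beta)\tta+(1+\beta)\tta^{\sfT})$ is precisely the ``expanding the skew operator'' step the paper alludes to. The only cosmetic remark is that the middle equality in your transposability check should read $[\tta,\ttb]^{\sfT}=[\ttb^{\sfT},\tta^{\sfT}]=-[\tta,\ttb]$ for $\tta,\ttb\in\oo(n)$, which your ``up to sign'' already acknowledges.
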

We illustrate \cref{theo:main} by discussing the naturally reductive structure of the Frobenius metric in \cite{VAV}, corresponding to $\beta=1$. The metric \cref{eq:metCheeger} for $\xi, \eta\in T_X\GL(n)$ is $\Tr(X^{-1}\xi(X^{-1}\eta)^{\sfT})$. 
The group $\trG$ is $\GL(n)\times\SO(n)$, which acts on $\rG=\GL(n)$ by $(X, A).Y = XYA^{\sfT}$, $X,Y\in\GL(n)$, $A\in \SO(n)$ (see also \cite{MartinNeff}). The lift of $\ttX\in \fg=\gl(n) = \R^{n\times n}$ in \cref{eq:horLiftIntert} is $(\ttX^{\sfT}, \ttX^{\sfT}-\ttX)$, and the horizontal space $\tilde{\fm}$ consists of vectors of this form. The metric \cref{eq:metGH} is
\begin{equation}\langle (\ttX_1, \ttA_1) , (\ttX_2, \ttA_2)\rangle_{\trG} = \Tr\ttX_1\ttX_2 - \frac{1}{2}\Tr\ttA_1\ttA_2 \text{ for }(\ttX_1, \ttA_1) , (\ttX_2, \ttA_2)\in \trG.
\end{equation}
and evaluating the inner product on the lifts shows this is a submersion to $\GL(n)$ with the Frobenius metric $(\ttX_1, \ttX_2) = \Tr\ttX_1^{\sfT}\ttX_2$. The $\ad_{\trH}$-invariance condition on $\tilde{\fm}$ implies $\ad_{\oo(n)}$-invariance on $\gl(n)$, that means for $\ttA \in \oo(n), \ttX_1, \ttX_2 \in \gl(n)$
$$\Tr [\ttA, \ttX_1]^{\sfT}\ttX_2 +  \Tr\ttX_1^{\sfT}[\ttA, \ttX_2]=0,
$$
which also follows from $\Tr [\ttA, \ttX_1]^{\sfT}\ttX_2= \Tr [\ttX_1^{\sfT}, -\ttA]\ttX_2 = - \Tr[\ttA, \ttX_2]\ttX_1^{\sfT}$. For the naturally reductive condition \cref{eq:NatReductiveInv}, the projection $\cP_{\tilde{\fm}}$ is given by $\cP_{\tilde{\fm}}(\ttX, \ttA) =  (\ttX^{\sfT}+\ttA, 2\ttA+ \ttX^{\sfT} - \ttX)$ from \cref{eq:decompose}.
Denote $\tilde{\ttX}\in\tilde{\fm}$ for the lift of $\ttX\in\gl(n)$ then for $\ttX_1, \ttX_2\in \gl(n)$
$$\cP_{\tilde{\fm}}[\tilde{\ttX}_1, \tilde{\ttX}_2] = \cP_{\tilde{\fm}}([\ttX_1^{\sfT}, \ttX_2^{\sfT}], [\ttX_1^{\sfT}-\ttX_1, \ttX_2^{\sfT}-\ttX_2]),
$$
which is the lift of $[\ttX_1^{\sfT}, \ttX_2^{\sfT}] - [\ttX_1^{\sfT}-\ttX_1, \ttX_2^{\sfT}-\ttX_2] =[\ttX_1^{\sfT}, \ttX_2] + [\ttX_1, \ttX_2^{\sfT}] - [\ttX_1, \ttX_2]$. Thus, condition \cref{eq:NatReductiveInv}, which follows from the quotient structure, would also follow from the following identity, which can be verified directly by manipulating trace identities
\begin{equation}\Tr([\ttX_1^{\sfT}, \ttX_2] + [\ttX_1, \ttX_2^{\sfT}] - [\ttX_1, \ttX_2])\ttX_3^{\sfT} = \Tr([\ttX_3^{\sfT}, \ttX_1] + [\ttX_3, \ttX_1^{\sfT}] - [\ttX_3, \ttX_1])\ttX_2^{\sfT}. 
\end{equation}

Instead of taking $\fa=\oo(n)$, if we take $\fa = \oo(n)\oplus \R\dI_n$, then the scalar operator $\beta\dI_{\fa}$ violates the condition in \cref{item:GL} of the list of possible choices of $\beta$ to produce a Riemannian metric. However, if we take instead the operator $\rB$ which acts by multiplying $\oo(n)$ by $\beta$ and $\R\dI_n$ by a negative constant $-\mu$, that is for $\ttA \in \fa$
$$\rB \ttA = \beta(\ttA - \frac{1}{n}\Tr(\ttA)\dI_n) - \frac{\mu}{n} \Tr(\ttA)\dI_n .$$
Then the metric in \cref{sec:riemannFam} is Riemannian, the formulas still hold, as $\R\dI_n$ does not contribute to the Lie brackets. This is the metric in \cite{MartinNeff}.
\section{The Special Orthogonal group with a deformation metric}\label{sec:SOSE}
In this case, $\rG=\SO(n)$, the group of orthogonal matrices with determinant $1$. Its Lie algebra $\fg = \oo(n)$ is identified with the set of antisymmetric matrices. For $\fa$, we take the algebra of matrices where the top $d\times d$ block is antisymmetric and the remaining entries vanish, ($\fa =\begin{bmatrix}\oo(d) & 0_{d \times(n-d)}\\ 0_{(n-d) \times d} & 0_{(n-d) \times (n-d)}\end{bmatrix}$ graphically). Here, the projection $\ttpa$ from $\R^{n\times n}$ maps a matrix $\ttW$ to the matrix $\ttpa \ttW$ with the top $d\times d$ block is antisymmetrized from the corresponding block of $\ttW$, while the remaining entries vanish. The standard normalization takes $\beta_0 = -\frac{1}{2}$. For $\rB = \alpha\dI_{\fa}$, we have
\begin{proposition}At $X\in \SO(n)$, $\xi, \eta\in T_X\SO(n)$ where $\xi = X\begin{bmatrix}A_{\xi} & -B^{\sfT}_{\xi}\\ B_{\xi} & C_{\xi} \end{bmatrix}$, $\eta = X\begin{bmatrix}A_{\eta} & -B^{\sfT}_{\eta}\\ B_{\eta} & C_{\xi} \end{bmatrix}$, with $A_{\xi}, A_{\eta}\in\oo(d), C_{\xi}, C_{\eta}\in \oo(n-d)$, the metric \cref{eq:metCheeger} with $\beta_0 = -\frac{1}{2}, \rB = \alpha\dI_{\fa}$ is given by
  \begin{equation}\langle\xi, \eta \rangle =\frac{1}{2}\Tr \xi^{\sfT}\eta + (\alpha-\frac{1}{2})
    \Tr(\ttpa(\xi^{\sfT}X)\ttpa(X^{\sfT}\eta))=\frac{1}{2}\Tr \xi^{\sfT}\eta + (\alpha-\frac{1}{2})
    \Tr A_{\xi}^{\sfT}A_{\eta} .\label{eq:SOmetric}
  \end{equation}
The Christoffel function of the Levi-Civita connection is  
  \begin{equation}\begin{gathered}
    \Gamma(\xi, \eta) = \frac{1}{2}
    X(\xi^{\sfT} \eta + \eta^{\sfT}\xi)+ \frac{1-2\alpha}{2}X([\ttpa(X^{\sfT}\xi), X^{\sfT}\eta] +
    [\ttpa(X^{\sfT}\eta), X^{\sfT}\xi])\\
    = \frac{1}{2}
    X(\xi^{\sfT} \eta + \eta^{\sfT}\xi)+ \frac{1-2\alpha}{2}X\begin{bmatrix}0_{d\times d} & -A_{\xi}B_{\eta}^{\sfT} - A_{\eta}B_{\xi}^{\sfT}\\
    -B_{\eta}A_{\xi} - B_{\xi}A_{\eta} & 0_{(n-d)\times (n-d)}.
    \end{bmatrix}.\label{eq:christSO}
\end{gathered}    
  \end{equation}  
The geodesic $\gamma(t)$ with $\gamma(0) = X, \dot{\gamma}(0) = \xi$ is
  \begin{gather}\label{eq:SOgamma}
    \gamma(t)= X\exp\left(t\begin{bmatrix} 2\alpha A_{\xi} & -B_{\xi}^{\sfT}\\ B_{\xi} & C_{\xi}\end{bmatrix}\right)\exp\left(t\begin{bmatrix} (1-2\alpha)A_{\xi} & 0\\ 0 & 0\end{bmatrix}\right).
  \end{gather}
  Parallel transport of $\eta$ along $\gamma(t)$ is given by
  \begin{equation}
    \ttT_0^t(\gamma)\eta = X\exp\left(t\begin{bmatrix} 2\alpha A_{\xi} & -B^{\sfT}_{\xi}\\ B_{\xi} & C_{\xi}\end{bmatrix}\right)\expa(t\rP, X^{\sfT}\eta)\exp\left(t\begin{bmatrix} (1-2\alpha)A_{\xi} & 0\\ 0 & 0\end{bmatrix}\right)\label{eq:parSON}
  \end{equation}
  where the operator $\rP=\rP_{\ttV}$ for $\ttV = X^{\sfT}\xi$ is given in \cref{eq:transportCheeger}.
\end{proposition}
This is the metric in \cite{ExtCurveStiefel} for the $\SO(n)$ identified with the Stiefel manifold $\St{n}{n}$.
\section{A naturally reductive family of quotient metrics}\label{sec:submerse}
We now introduce a deformation metric on a quotient manifold $\rG/\rK$ where $\rG$ has a deformation metric by a subgroup $\rA$. The resulting metric is also naturally reductive. The two standard metrics on the Stiefel manifold are instances of these metrics. Roughly, the requirement is the Lie algebra of $\rK$ is both orthogonal and commuting with the Lie algebra $\fa$ of $\rA$. The precise statement is in \cref{theo:hornatural} below.

Let us explain through an example of a space of block matrices. For $k < n$, let $\fa$ be a copy of $\R^{k\times k}$, considered as a subspace of $\cE=\R^{n\times n}$ by identifying $\fa$ with the space of upper-left $k\times k$ block diagonal matrices, and $\faperp$ its Frobenius orthogonal complements. Hence, $\fa$ consists of matrices of the form $\begin{bmatrix} A &0\\0& 0\end{bmatrix}$ with $A\in\R^{k\times k}$, while $\faperp$ consists of those of the form $\begin{bmatrix} 0 & B\\C& D\end{bmatrix}$, with $D\in \R^{(n-k)\times (n-k)}, B\in \R^{k\times (n-k)}, C\in \R^{(n-k)\times k} $. Notice that $\fa$ and $\fk = \{\begin{bmatrix} 0 & 0\\0& D\end{bmatrix}|D\in \R^{(n-k)\times (n-k)}\}$ are both commuting and orthogonal. This is the situation we are interested in.
\begin{theorem}\label{theo:hornatural}Assume $\rA\subset \rG$ are connected matrix Lie groups with Lie algebras $\fa\subset\fg$, with a bi-invariant symmetric bilinear form $\langle,\rangle$ on $\fg$ and $\rB$ is an intertwining operator on $\fa$ as in \cref{theo:main}. Let $\rK$ be a closed Lie group commuting with $\rA$, with a nondegenerate Lie algebra $\fk$ of $\rK$ satisfying 
\begin{equation}
[\fk, \fa] = \{0\}\text{ and } \langle \fk, \fa \rangle =\{0\}.\label{eq:top}
\end{equation}
Then $\rK$ acts by right multiplication as a group of isometries on $\rG$ with the metric \cref{eq:metCheeger}. The resulting quotient space $\cM=\rG/\rK$ with the quotient metric is naturally reductive when identified with $\trG/\trK$, where $\trG=\rG\times \rA$ and 
\begin{equation}\trK = \{ (KH, H)\in \trG | K\in \rK, H\in \rA\}.
\end{equation}
Specifically, let $\fm$ be the orthogonal complement of $\fk$ in $\langle, \rangle$. Then $\fa\subset \fm$. Set
\begin{equation}\begin{gathered}\tfk = \{(\ttK + \ttA, \ttA)| \ttK\in \fk, \ttA\in\fa\},\\
\tfmfq = \{(\ttM, \hrB\ttpa\ttM)| \ttM\in \fm\} \text{ with }\hrB =  (\rB+\beta_0\dI_{\fa})\rB^{-1}. \label{eq:reductiveComp}
\end{gathered}
\end{equation}
Then $\tfg =\tfmfq \oplus \tfk$, with the reductive decomposition for $(\ttX, \ttA)\in \tfg$ is given by
\begin{equation}\begin{gathered}
(\ttX, \ttA) =  \left(\cP_{\fm}\ttX
- (\dI_{\fa}+\beta_0^{-1}\rB)\ttpa\ttX + \beta_0^{-1}\rB\ttA
, -(\dI_{\fa}+\beta_0^{-1}\rB)\ttpa(\ttX-\ttA)\right)\\
+ \left(\cP_{\fk}\ttX + \ttA + (\dI_{\fa}+\beta_0^{-1}\rB)\ttpa(\ttX-\ttA) , \ttA + (\dI_{\fa}+\beta_0^{-1}\rB)\ttpa(\ttX-\ttA)\right).\label{eq:decomposeK}
\end{gathered}
\end{equation}
The metric on $\trG/\trK$ given by \cref{eq:metricM} restricted to $\tfmfq$ corresponds to \cref{eq:metCheeger}.

When $\dI_{\fa} + \beta^{-1}\rB$ is invertible, if $\trG$ is equipped with the bi-invariant metric \cref{eq:metGH} then $\cM$ is isometric to $\trG/\trK$. 
Let $\pi:\rG\mapsto \rG/\rK$ be the quotient submersion and $\fq:\trG\mapsto \rG$ be the submersion in \cref{theo:main}. Then the map 
\begin{equation}\fqh = \pi\circ \fq: \trG\mapsto \rG/\rK=\cM,\quad\quad \fqh(X, A) = \lb XA^{-1} \rb \quad\text{ for }(X, A) \in \trG \end{equation}
is a pseudo-Riemannian submersion. Here, $\lb XA^{-1} \rb\in \cM = \rG/\rK$ is the equivalence class of $XA^{-1}$ defined by the right multiplication by $\rK$.
\end{theorem}
\begin{proof}
For $K\in \rK$, $\nu\in \fg$ and $\ttA\in \fa$, differentiating $K^{-1}\expm(t\ttA) K =  \expm(t\ttA)$, we get $K^{-1}\ttA K =  \ttA$. Using $\Ad_K$ invariance of $\langle,\rangle$
$$\begin{gathered}\langle \ttA, K^{-1}\nu K\rangle = \langle K^{-1}\ttA K, K^{-1}\nu K\rangle = \langle \ttA, \nu\rangle = \langle \ttA , \ttpa \nu\rangle.
\end{gathered}$$
Thus, $\ttpa K^{-1}\nu K = \ttpa \nu$.
For $\eta = X\nu$ in $T_X\rG$ with $\nu\in\fg$, right multiplication by $K$ sends $X$ to $XK$ and $\eta$ to $\eta K$. By left invariance of $\langle,\rangle_{\vbeta}$ and $\Ad$-invariance of $\langle,\rangle$,
if $\vbeta_{XK}\langle , \rangle$ denotes the pseudo-Riemannian product $\langle, \rangle_{\vbeta}$ at $XK$
  $$\begin{gathered}\vbeta_{XK}\langle \eta K, \eta K\rangle = \vbeta_{XK}\langle X\nu K, X\nu K\rangle=\langle (XK)^{-1}X\nu K, (XK)^{-1} X\nu K\rangle_{\vbeta} \\
=   \beta_0 \langle K^{-1}\nu K, K^{-1}\nu K\rangle - \beta_0\langle (\dI_{\fa}+\beta_0^{-1}\rB) \ttpa (K^{-1}\nu K), \ttpa(K^{-1}\nu K)\rangle\\
 = \beta_0\langle  \nu ,\nu \rangle -\beta_0\langle (\dI_{\fa}+\beta_0^{-1}\rB) \ttpa\nu, \ttpa\nu\rangle = \langle \nu, \nu\rangle_{\vbeta}.
\end{gathered}$$
Thus, $\rK$ acts as a group of isometries on $\rG$ by right multiplication. Hence, $\pi$ is a submersion from $\rG$ if $\cM$ is equipped with the quotient metric. Since $YKA^{-1} = YA^{-1}K$ for $Y\in \rG, K\in \rK$ and $A\in \rA$, there is a well-defined action by $\trG$ on $\rG/\rK$ by $(X, A): \lb Y\rb\mapsto \lb XYA^{-1}\rb$, and $\trG$ acts as a group of isometries. The inverse image of $\lb XA^{-1} \rb$ consists of $(X_1, A_1)$ such that $X_1A_1^{-1} = XA^{-1}K$ for $K\in \rK$. Set $H = A^{-1}A_1$ then $A_1=AH$,
$$X_1 = XA^{-1}KA_1=XA^{-1}A_1K = XHK= XKH$$
since $K$ and $H$ commute. Thus, $(X_1, A_1) = (X, A) (KH, H)\in (X, A)\trK$. Since $\rK$ and $\rH$ are closed subgroups, $\trK$ is closed, identified with the isotropy group at $\lb \dI_{\rG}\rb$. From \cite[Proposition 11.12]{ONeil1983}, we can identify $\cM=\rG/\rK$ with $\trG/\trK$.

We now show $\tfk\cap \tfmfq = \{0\}$. In \cref{eq:reductiveComp}, for a point in the intersection, $\ttK = 0, \ttA=\ttM$ and $\ttM =\hrB\ttpa\ttM$, thus $\ttA=\ttM=0$. We confirm that we have the equality in \cref{eq:decomposeK}, and the two terms in the decomposition are in $\tfm$ and $\tfk$, respectively, giving us the reductive decomposition, where $\Ad_{\tfk}$ invariance will follow from the next argument.

We argue as before that we only need to prove the remaining naturally reductive conditions for nonsingular $\dI_{\fa}+\beta_0^{-1}\rB$, which follows from bi-invariance of the metric \cref{eq:metGH} on $\trG$. The singular case follows by continuity as the pairing on $\tfmfq$ is well-defined and continuous in the parameters. The horizontal lift is still given by \cref{eq:horLiftIntert}, restricted to $\ttZ\in \fm$. Thus, most requirements have been verified in the proof of \cref{prop:main}. It remains to show \cref{eq:decomposeK} is an orthogonal decomposition with the metric \cref{eq:metGH} when $\dI_{\fa}+\beta_0^{-1}\rB$ is nonsingular, which is clear. Therefore, the induced metric on $\cM$ is naturally reductive.
\end{proof} 

\begin{lemma} \label{lem:faaftop}For an intertwining operator $\rB$ on $\fa\subset \fg$ with a bi-invariant form $\langle,\rangle$, for  $\ttX_1, \ttX_2 \in \fg$, assume the Lie algebra $\fk$ satisfies \cref{eq:top}. Then we have
\begin{equation}\begin{gathered} \langle \fa,  [\rB\ttpa \ttX_1, \ttX_2] + [\rB\ttpa \ttX_2, \ttX_1]\rangle = \{0\},\\
\langle \fk,  [\rB\ttpa \ttX_1, \ttX_2] + [\rB\ttpa \ttX_2, \ttX_1]\rangle = \{0\}.
\end{gathered}
\end{equation}
\end{lemma}
\begin{proof}
This follows since for $\ttA\in \fa$,
$$\begin{gathered}
\langle \ttA,  [\rB\ttpa \ttX_1, \ttX_2] + [\rB\ttpa \ttX_2, \ttX_1]\rangle = 
\langle    [\ttA, \rB\ttpa \ttX_1], \ttX_2\rangle  + \langle [\ttA, \rB\ttpa \ttX_2], \ttX_1\rangle \\
= \langle    [\ttA, \rB\ttpa \ttX_1], \ttpa\ttX_2\rangle  + \langle [\ttA, \rB\ttpa \ttX_2], \ttpa\ttX_1\rangle 
= \langle    \ttA, [\rB\ttpa \ttX_1, \ttpa\ttX_2]\rangle  + \langle \ttA, [\rB\ttpa \ttX_2, \ttpa\ttX_1]\rangle \\
= \langle    \ttA, \rB[\ttpa \ttX_1, \ttpa\ttX_2]\rangle  + \langle \ttA, \rB[\ttpa \ttX_2, \ttpa\ttX_1]\rangle =0.\\
\end{gathered}$$
We have $\langle \fk,  [\rB\ttpa \ttX_1, \ttX_2] + [\rB\ttpa \ttX_2, \ttX_1]\rangle =\{0\}$  by bi-invariance of $\langle, \rangle$ and $[\fa, \fk] =0$.
\end{proof}

From the decomposition $\fg = \fm\oplus\fk$, the horizontal space at $X\in\rG$ is $\cH_X=X\fm$, while the vertical space is $\cV_X = X\fk$. We now have a quotient version of \cref{theo:mainChris}.
\begin{theorem}\label{theo:mainChrisHor}With the assumption and the metric of \cref{theo:hornatural}, the horizontal Christoffel function $\Gamma^{\cH}$ for two horizontal vectors $\xi, \eta$ at $X\in\rG$ is given by
  \begin{equation}\begin{gathered}\Gamma^{\cH}(\xi, \eta) =
-\frac{1}{2}\left(\xi X^{-1}\eta +\eta X^{-1}\xi + X(\cP_{\fk}[X^{-1}\xi,X^{-1}\eta])\right)\\
    + \frac{1}{2}X\left([(\dI_{\fa} + \beta_0^{-1}\rB)\ttpa(X^{-1}\xi), X^{-1}\eta] + [(\dI_{\fa} + \beta_0^{-1}\rB)\ttpa(X^{-1}\eta), X^{-1}\xi]\right)\end{gathered}\label{eq:maingammahor}
  \end{equation}
if we express the horizontal connection for two horizontal vector fields $\ttY, \ttZ$ as $\nabla^{\cH}_{\ttY}\ttZ = \rD_{\ttX}\ttY + \Gamma^{\cH}(\ttX, \ttY)$. The horizontal geodesic $\gamma$ with $\gamma(0) = X, \dot{\gamma}(0) = \xi$, where $\xi$ is a horizontal vector is given by \cref{eq:geodesicCheeger}.

Let $\ttV = X^{-1}\xi\in \fm$. The parallel transport of $\eta\in \cH_X$ is given by
  \begin{gather}
      \ttT(\gamma)_{0}^t\eta = X\expm(t\left(\ttV-(\dI_{\fa}+\beta_0^{-1}\rB)\ttpa\ttV \right))\expa(t\rP_{\ttV}, X^{-1}\eta) \expm(t(\dI_{\fa}+\beta_0^{-1}\rB)\ttpa\ttV) \label{eq:transporthor}\\
\text{ where }      \rP_{\ttV}: \ttB \mapsto \frac{1}{2}\left( \cP_{\fm}[\ttB, \ttV] + ([(\dI_{\fa}+\beta_0^{-1}\rB) \ttpa\ttV, \ttB]-[(\dI_{\fa}+\beta_0^{-1}\rB) \ttpa\ttB, \ttV])\right).\label{eq:transporthor2}
\end{gather}  
$\rP_{\ttV}$ maps $\fm$ to itself and the operator $\rP_{\ttV}$ is antisymmetric in the pairing $\langle\rangle_{\vbeta}$ on $\fm$.
\end{theorem}
\begin{proof}The proof is similar to the proof of \cref{theo:mainChris}. We will use the notations and results from that proof.

We will assume $X=\dI_{\rG}$ then use left invariance. For two horizontal vectors $\xi_0, \eta_0\in \fm$ at $X=\dI_{\rG}$, consider the horizontal vector fields $\bxi: X\mapsto X\xi_0, \bareta: X\mapsto X\eta_0$, then by \cite[Lemma 7.45]{ONeil1983}, if $\rH$ is the horizontal projection and $\Gamma$ is the Christoffel function of Levi-Civita connection in \cref{eq:ChristoffelCheeger} then
$$\begin{gathered} \Gamma^{\rH}_{X=I_{\rG}}(\bxi, \bareta) = \rH(\rD_{\bxi}\bareta + \Gamma(\bxi, \bareta) ) - \rD_{\bxi}\bareta \\
= \cP_{\fm}(\xi_0\eta_0 - \frac{1}{2}(\xi_0\eta_0 + \eta_0\xi_0)) + \frac{1}{2}\left([(\dI_{\fa} + \beta_0^{-1}\rB)\ttpa\xi_0, \eta_0] + [(\dI_{\fa} + \beta_0^{-1}\rB)\ttpa \eta_0, \xi_0]\right)-\xi_0\eta_0
\end{gathered}$$
because by \cref{lem:faaftop}, the sum of the two Lie brackets in the third and fourth terms above are in $\fm$. We collect the remaining terms to
$$\begin{gathered}
    \frac{1}{2}\cP_{\fm}[\xi_0, \eta_0] - \xi_0\eta_0 = 
    \frac{1}{2}[\xi_0, \eta_0] - \frac{1}{2}\cP_{\fk}[\xi_0, \eta_0] - \xi_0\eta_0 =
 - \frac{1}{2}(\xi_0\eta_0 + \eta_0\xi_0)  - \frac{1}{2}\cP_{\fk}[\xi_0,\eta_0].
\end{gathered}$$
Together, we get $\Gamma^{\rH}_{X=I_{\rG}}(\bxi, \bareta)$ evaluated at $X = \dI_{\rG}$ is given by \cref{eq:maingammahor}. We get the general case by using left invariance.

The result on the horizontal lift of geodesics is standard.  For parallel transport, we will assume again $X=\dI_{\rG}$, the general result will follow by left invariance. 
As before, by \cref{theo:hornatural} and \cite[Proposition 2.12]{SmithThesis}, the lifted parallel transport from $(\dI_{\rG}, \dI_{\rA})$ is $(\exp(t(\xi+\xi^{\rA}))\ttC^{\rG}, \exp(t\xi^{\rA})\ttC^{\rA})$ where $(\ttC^{\rG}(t), \ttC^{\rA}(t))\in\tilde{\fm}_{\fqh}$ solves
$$\begin{gathered}(\dot{\ttC}^{\rG}, \dot{\ttC}^{\rA} ) = \cP_{\tilde{\fm}_{\fqh}} (-\frac{1}{2}[\xi + \xi^{\rA},  \ttC^{\rG}], -\frac{1}{2}[\xi^{\rA}, \ttC^{\rA}]),\\
(\ttC^{\rG}(0), \ttC^{\rA}(0)) = (\eta + \eta^{\rA}, \eta^{\rA}).
\end{gathered}$$
The lifted transport maps to $\exp(t(\xi + \xi^{\rA}))(\ttC^{\rG} - \ttC^{\rA} ) \exp(- t\xi^{\rA})$, the horizontal transport on $\rG$, and $F(t) = \ttC^{\rG}(t) - \ttC^{\rA}(t)$ solves
$$\begin{gathered}\dot{F} = -\frac{1}{2}\cP_{\fm}\left([\xi, F] - [\xi, (\dI_{\fa} + \beta_0^{-1}\rB)\ttpa F] - [(\dI_{\fa} + \beta_0^{-1}\rB)\ttpa\xi, F]\right).
\end{gathered}$$
The additional projection $\cP_{\fm}$ is from the map $d\pi$ in $d\fqh =  d\pi\circ d\fq$. Thus, $F(t)=\expa(t\rP_{\xi}, \eta)$, giving us the parallel transport formula \cref{eq:transporthor} for $X=\dI_{\rG}$, where we note that the sum of the last two brackets are in $\fm$ by \cref{lem:faaftop}. The remaining statements are immediate or follow from similar arguments in the proof of \cref{theo:mainChris}.
\end{proof}

\section{Stiefel manifolds}\label{sec:stiefel}
The Stiefel manifold $\St{n}{d}$ consists of orthogonal matrices in $\R^{n\times d}$, where we assume $d < n$. Thus, $Y\in \St{n}{d}$ if $Y^{\sfT}Y = \dI_d$. The simplest Stiefel manifolds are unit spheres, corresponding to the case $d=1$. The tangent space to $\St{n}{d}$ at $Y$ is defined by the equation $(Y^{\sfT}\xi)_{\sym}=0$, $\xi\in \R^{n\times d}$.

\subsection{An isometry and a totally geodesic submanifold}\label{sec:isometry}
In a sphere, great circles are geodesics. Given a point $Y$ on the unit sphere, and a tangent vector $\xi$, the plane spanned by $Y$ and $\xi$ cuts a great circle. Changing to the coordinate defined by $Y$ and the unit vector in the direction $\xi$, this great circle maps isometrically to the unit circle in $\R^2$. There is a similar picture for Stiefel manifolds. We start the following lemma,
with ideas in \cite{MataigneZimmermannMiolane} and discussed partially in \cite[section 5.1]{NguyenGeodesic}.
\begin{lemma}Let $Y$ be a point on the Stiefel manifold $\cM=\St{n}{d}\subset \R^{n\times d}$, and $\xi$ be a tangent vector to $\cM$ at $Y$. If $\xi\neq YY^{\sfT}\xi$ and $Q_{\xi}$ is an orthonormal basis of the column span of $\xi-YY^{\sfT}\xi$, then $\xi = YA + Q_{\xi}R_{\xi}$ for some matrix $R_{\xi}$. More generally, assume $Q$ is an orthogonal matrix in $\R^{n\times k}$ for an integer $k\geq 0$, such that $Y^{\sfT}Q = 0$ and $\xi = YA+QR$ for a matrix $R\in\R^{k\times d}$. Then the vector subspace $\cS=\cS_Q\subset \R^{n\times d}$ of matrices of the form
  \begin{equation}\cS := \{ V\in\R^{n\times d}|\; V = YV_Y + Q V_{Q}\text{ for } V_Y\in \R^{d\times d}, V_{Q}\in \R^{k\times d}\}\label{eq:subspaceS}
  \end{equation}
  intersects $\St{n}{d}$ along a submanifold $\cM_{\cS}\subset\cM$. Let $\ttF$ be the map from $\R^{n\times d}$ to $\R^{(d+k)\times d}$ defined by
  \begin{equation}  \ttF: V\mapsto FV, \quad V\in \R^{n\times d}\quad \text{ for } F = F_{Y,Q} = \begin{bmatrix}Y^{\sfT}\\Q^{\sfT}\end{bmatrix}.\label{eq:stfisometry}
\end{equation}    
Then $\ttF$ restricted to $\cM_{\cS}$ is a bijection onto $\St{d+k}{d}\subset \R^{(d+k) \times d}$.  We have $\ttF Y =\dI_{d+k,d}= \begin{bmatrix}\dI_d\\ O_{k\times d}\end{bmatrix}\in \St{d+k}{d}\subset\R^{n\times(d+k)}$.\hfill\break
If $\xi = YY^{\sfT}\xi$, we can consider $F_{Y,Q}= Y^{\sfT}$, or $Q=0$ and the intersection $\cM_{\cS}$ is isometric to $\OO(d)$, with two connected components.\label{lem:isometry}
\end{lemma}
\begin{proof}
  First, if $\xi\neq YY^{\sfT}\xi$, since columns of $Q_{\xi}$ are in the column span of $\xi - YY^{\sfT}\xi$, we have $Y^{\sfT}Q_{\xi}=0$, and $Q_{\xi}^{\sfT}Q_{\xi}=\dI_k$ by the orthonormal assumption. Express the columns of $\xi - YY^{\sfT}\xi$ in the basis $Q_\xi$ as $\xi - YY^{\sfT}\xi=Q_{\xi}R_{\xi}$ for some matrix $R_{\xi}$, then $\xi = Y(Y^{\sfT}\xi)+Q_{\xi}R_{\xi}$. This shows $Q_{\xi}$ has the desired property. 
    
  Assume $Y^{\sfT}Q=0, Q^{\sfT}Q=\dI_k$ and $\xi=YA+QR$ for some matrix $A, R$. With  $F=\begin{bmatrix}Y|Q\end{bmatrix}^{\sfT}$, we have $FF^{\sfT} = \dI_{d+k,d+k}$. If $V = YV_Y + QV_Q\in \cM_{\cS}$, then $\ttF V = FV=\begin{bmatrix}V_Y\\V_Q\end{bmatrix}$. The condition $V^{\sfT}V= \dI_d$ implies $V_Y^{\sfT}V_Y +V_Q^{\sfT}V_Q = \dI_d$ by substitution, but this means $(\ttF V)^{\sfT}(\ttF V)=\dI_d$. Thus, $\ttF$ maps $\cM_{\cS}$ to $\St{d+k}{d}$. Conversely, $V = F^{\sfT}(\ttF V)$ and the map $W\mapsto F^{\sfT}W$ maps $\St{d+k}{d}$ to $\cM_{\cS}$, hence, $\ttF$ is a bijection from $\cM_{\cS}$ to $\St{d+k}{d}$.

By direct calculation, $F Y = \dI_{d+k,d}$ and $F\xi = \begin{bmatrix}A\\R\end{bmatrix}\in T_{\dI_{d+k,d}}\St{d+k}{d}$.
  
The case $k=0$ follows similarly.    
\end{proof}
We will see the metric induced from the family in \cref{sec:SOSE} could be expressed as   
\begin{equation}   \langle \xi, \xi \rangle_{\vbeta} = \Tr\xi^{\sfT}\xi + (\alpha-1)\Tr\xi^{\sfT}YY^{\sfT}\xi,\label{eq:StiefelMetric}
\end{equation}
which is the metric \cite{ExtCurveStiefel}, using the parametrization in \cite{NguyenGeodesic}. For a tangent vector $\nu$ at $V\in\cM_{\cS}$, write $\nu= F^{\sfT}\ttF(\nu), V = F^{\sfT}\ttF(V)$ and substitute, we get
$$   \langle \nu, \nu \rangle_{\vbeta} = \Tr\ttF(\nu)^{\sfT}\ttF(\nu) + (\alpha-1)\Tr\ttF(\nu)^{\sfT}\ttF(V)\ttF(V)^{\sfT}\ttF(\nu).
$$
Thus, if we equip $\St{d+k}{d}$ with a metric with the same parameter $\alpha$, $\ttF$ is an isometry.

Computing on this smaller Stiefel manifold is more effective, as observed in the case of geodesics \cite{Edelman_1999,NguyenGeodesic,ZimmermanHuper,Rentmee,Zimmer}. The case of parallel transport will be done similarly. Besides the original paper \cite{Edelman_1999}, the first author benefited from the prior works \cite[Chapter 5]{Rentmee} and \cite{Zimmer} in understanding the advantage of working on the \emph{baby problem}, as referred to in the first citation.

By \cite[page 104]{ONeil1983}, $\cM_{\cS}$ is a totally geodesic submanifold, as the geodesic with initial point $V\in \cM_{\cS}$ and initial vector tangent to $\cM_{\cS}$ will stay on $\cM_{\cS}$.
\subsection{Submersion metric, connection and parallel transport}\label{sec:submerseStiefel}The material on the submersion metric and the connection of this section follows from the treatment of Stiefel manifolds in the original paper \cite{Edelman_1999}, the thesis \cite{Rentmee} (both with $\alpha\in \{\frac{1}{2}, 1\}$), and  \cite{VMSil,ExtCurveStiefel}, where the quotient structure $(\rG\times \rA)/\trK$ with $\rG = \SO(n), \rA=\SO(d), \rK=\SO(n-d)$ and $\trK$ as in \cref{theo:hornatural} is covered with a different metric parametrization.

The Stiefel manifold $\St{n}{d}$ could be considered as a quotient manifold of $\SO(n)$ under the map sending an $n\times n$ matrix to its first $d$ columns, or $\Theta: X\mapsto X\dI_{n, d}$, for $X\in \SO(n)$, $\dI_{n,d}=\begin{bmatrix}\dI_d\\ 0_{(n-d)\times d}\end{bmatrix}$. This map is surjective, realizing $\St{n}{d}$ as a quotient of $\SO(n)$ by the group $\rK$ of block diagonal matrices of the form $\diag(\dI_d, Q)$ where $Q\in \SO(n-d)$. This is due to the fact that we can complement an orthogonal matrix $Y$ with an orthogonal matrix $Y_{\perp}\in \St{n}{n-d}$ to an element of $\SO(n)$, and given $Y_{\perp}$, any other complement basis is of the form $Y_{\perp}Q$ for $Q\in\rK$. In this case, $\fa$ is the top $d\times d$ block-matrix copy of $\oo(d)$ and $\fk$ is the bottom $(n-d)\times (n-d)$ copy of $\oo(n-d)$. The complement $\fm$ consists of all matrices with the lower right $(n-d)\times(n-d)$ blocks  identically zero. Thus, graphically, $\fk = \begin{bmatrix}0_{d\times d} & 0 \\ 0 &\oo(n-d)\end{bmatrix}, \fm = \begin{bmatrix}\fa  & * \\ * & 0_{(n-d)\times (n-d)}\end{bmatrix}$.

The differential $d\Theta$ at $\dI_n$ also maps a Lie algebra element in $\oo(n)$ to its first $d$ columns. If $\Delta = \begin{bmatrix} A\\B\end{bmatrix}$ is a tangent vector at $\dI_{n,d}$, then its horizontal lift is the matrix $\bar{\Delta} = \begin{bmatrix} A & -B^{\sfT}\\B & 0\end{bmatrix}$, the bottom right entry corresponding to $\fk$ is set to zero. With $\beta_0=-\frac{1}{2}, \rB = \alpha\dI_{\fa}$, the metric in \cref{eq:SOmetric} at $\dI_n$ is
      $$\langle \bar{\Delta}, \bar{\Delta} \rangle_{\vbeta} = \frac{1}{2}(\Tr A^{\sfT}A + 2\Tr B^{\sfT}B) + (\alpha-\frac{1}{2})\Tr A^{\sfT}A = \alpha\Tr A^{\sfT}A + \Tr B^{\sfT}B.
    $$
More generally, for $\bX\in \SO(n)$, split $\bX=\begin{bmatrix}Y|\Yperp\end{bmatrix}$ along the $d$-th column, if $\bxi\in T_{\bX}\SO(n)$ is horizontal and maps to $\xi\in T_Y\St{n}{d}$, with $\bX^{\sfT}\bxi=\bar{\Delta}$ above then we deduce $A = Y^{\sfT}\xi, B=\Yperp^{\sfT}\xi$. Using $\Yperp\Yperp^{\sfT} + YY^{\sfT} = \dI_n$, the metric is as given by \cref{eq:StiefelMetric}
$$\alpha \Tr \xi Y Y^{\sfT}\xi + \Tr \xi \Yperp \Yperp^{\sfT}\xi =
\Tr\xi^{\sfT}\xi + (\alpha - 1)\Tr \xi^{\sfT}Y Y^{\sfT}\xi.
$$
Moreover, set $\dIperp = \begin{bmatrix} 0_{d\times(n-d)} \\ \dI_{n-d}\end{bmatrix}$ then the horizontal lift $\bxi$ is given by
\begin{equation}\bxi = \xi\dI_{n,d}^{\sfT} - Y\xi^{\sfT}\Yperp\dIperp^{\sfT}\label{eq:stiefellift}.
\end{equation}
To see this, with $\bxi$ as above, we have $\bxi\dI_{n,d}=\xi$ and using $Y^{\sfT}Y = \dI_d, \Yperp^{\sfT}Y = 0$ then
$$\bX^{\sfT}\bxi= (Y\dI_{n,d}^{\sfT} + \Yperp\dIperp^{\sfT})^{\sfT}\bxi=
\dI_{n,d}Y^{\sfT}\xi\dI_{n,d} - \dI_{n,d}\xi^{\sfT}\Yperp\dIperp^{\sfT} +\dIperp\Yperp^{\sfT}\xi\dI_{n,d}^{\sfT},$$
which is antisymmetric, and $\dIperp^{\sfT}(X^{\sfT}\bxi)\dIperp =0$. We also have $\ttpa(X^{\sfT}\bxi)= \dI_{n,d}Y^{\sfT}\xi\dI_{n,d}^{\sfT}$.

Under this metric and with the lift above, using \cref{eq:maingammahor}, we can show \cite{ExtCurveStiefel,NguyenLie} the Levi-Civita connection for Stiefel manifold is $\nabla_{\ttZ}\ttW = \rD_{\ttZ}\ttW + \Gamma(\ttZ, \ttW)$ with the Christoffel function evaluated at $Y\in \cM=\St{n}{d}, \ttZ(Y) =\xi, \ttW(Y) =\eta$ is
\begin{equation}
  \label{eq:stiefel_connect}
\Gamma(\xi, \eta) = \frac{1}{2}Y(\xi^{\sfT}\eta+\eta^{\sfT}\xi) +(1-\alpha)(\dI_n-YY^{\sfT})(\xi\eta^{\sfT}+\eta\xi^{\sfT})Y.
\end{equation}

For $Y\in \cM=\St{n}{d}$, with $\Theta(\bX)=Y$, and $\xi\in T_Y\cM= T_Y\St{n}{p}$, using \cref{eq:geodesicCheeger}, the geodesics, with initial conditions $\gamma(0) = Y, \dot{\gamma}(0) = \xi$ is
\begin{equation}\gamma(t) = \bX\expm(t\begin{bmatrix}2\alpha A & -B^{\sfT}\\ B & 0 \end{bmatrix} )\expm(t\begin{bmatrix}(1-2\alpha)A & 0\\0 & 0 \end{bmatrix})\dI_{n,d}
\end{equation}
if $\bxi = \bX\begin{bmatrix}A & -B^{\sfT}\\ B & 0 \end{bmatrix}$. Parallel transport could also be computed using \cref{eq:transporthor} as $\rK$ commutes with $\rA$. The operations are $O(n^3)$ and not efficient.

However, we can use the isometry in \cref{sec:isometry}. For geodesics, the isometry gives us the geodesic formula $\gamma(t) = F^{\sfT}\gamma_{\ttF\cM_{\cS}}(t)$, where $\gamma_{\ttF\cM_{\cS}}$ is the geodesic on $\St{d+k}{d}$ starting at $\dI_{d+k,d}$ with the initial velocity $\begin{bmatrix}A\\R\end{bmatrix}$. We lift $\gamma_{\ttF\cM_{\cS}}$ to $\gamma_{\ttF\cM_{\cS}}^{\SO(d+k)}\in \SO(d+k)$ starting at $\dI_{d+k}$, with the initial horizontal velocity $\begin{bmatrix}A & -R^{\sfT}\\ R & 0 \end{bmatrix}$. The resulting expression $F^{\sfT}(\gamma^{\SO(d+k)}_{\ttF\cM_{\cS}}(t)\dI_{d+k,d})$ is the known geodesic formula in \cite{Edelman_1999,ZimmermanHuper,NguyenGeodesic}
  \begin{equation}
    \label{eq:st_geo2}
    \gamma(t) = \begin{bmatrix}Y|Q\end{bmatrix}\expm(t\begin{bmatrix}2\alpha A & -R^{\sfT}\\ R & 0 \end{bmatrix} )\begin{bmatrix}\expm((1-2\alpha)t\bA)\\ 0_{k,d}\end{bmatrix}.
\end{equation}
  The parallel transport equation \cref{eq:transporthor} could be used for tangent vectors of $\cM_{\cS}$. Since parallel transports are linear in initial vectors, for the complete picture, we will need a transport formula for tangent vectors to $\St{n}{d}$ normal to $T_Y\cM_{\cS}$. We have
  \begin{lemma}\label{lem:normal}For a geodesic $\gamma(t)$ with $\gamma(0) =Y\in \cM=\St{n}{p}, \dot{\gamma}(0) = \xi\in T_Y\cM$, let $\cS$ be the vector subspace of $\R^{n\times d}$ of matrices with columns in the span of columns of $Y$ and $\xi$ as in \cref{eq:subspaceS}. If $\eta\in T_Y\St{n}{d}$ satisfies both $\eta^{\sfT}Y = 0$ and $\eta^{\sfT}\xi = 0$, or equivalently, $\eta$ is normal to $\cS$ in the metric \cref{eq:StiefelMetric}, then with $A = Y^{\sfT}\xi$, the parallel transport of $\eta$ along $\gamma$ in that metric is given by
    \begin{equation}\ttT(\gamma)_0^t\eta = \Delta(t) := \eta \expm(t(1-\alpha)A).\label{eq:normalTransport}
\end{equation}      
\end{lemma}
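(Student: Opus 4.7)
The plan is to verify directly that $\Delta(t) := \eta\expm(t(1-\alpha)A)$ satisfies the three defining conditions of parallel transport with respect to the Christoffel function \cref{eq:stiefel_connect}: the initial value $\Delta(0)=\eta$, tangency $\Delta(t)\in T_{\gamma(t)}\St{n}{d}$, and the ODE $\dot\Delta(t) + \Gamma(\gamma(t);\dot\gamma(t),\Delta(t)) = 0$.

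First I would translate the normality hypotheses into a uniform orthogonality. Since $\eta^{\sfT}Y = 0$, the condition $\eta^{\sfT}\xi = 0$ yields $\eta^{\sfT}(\xi - YY^{\sfT}\xi) = 0$, and because $Q$ is an orthonormal basis for the column span of $\xi - YY^{\sfT}\xi$, this forces $\eta^{\sfT}Q = 0$, hence $\eta^{\sfT}[Y|Q] = 0$. The closed form \cref{eq:st_geo2} shows that every column of $\gamma(t)$ lies in the span of the columns of $[Y|Q]$, and the same is true for $\dot\gamma(t)$ after differentiating, so $\eta^{\sfT}\gamma(t) = 0$ and $\eta^{\sfT}\dot\gamma(t) = 0$ for all $t$. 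This immediately yields $\gamma(t)^{\sfT}\Delta(t) = \expm(t(1-\alpha)A)^{\sfT}\eta^{\sfT}\gamma(t) = 0$, giving the initial condition at $t=0$ and the tangency $\gamma^{\sfT}\Delta + \Delta^{\sfT}\gamma = 0$.

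Next I would extract $\gamma(t)^{\sfT}\dot\gamma(t)$ in closed form. With $N(t) = \expm(t\begin{bmatrix}2\alpha A & -R^{\sfT}\\ R & 0\end{bmatrix})$ and $K(t) = \expm((1-2\alpha)tA)$ (both orthogonal, since $A$ is antisymmetric and the first exponent is antisymmetric), differentiating \cref{eq:st_geo2} gives $\dot\gamma(t) = [Y|Q]\,N(t)\begin{bmatrix}AK(t)\\RK(t)\end{bmatrix}$, the top block $A = 2\alpha A + (1-2\alpha)A$ arising from combining the contributions of $\dot N(t)$ and $\dot K(t)$. Using the orthogonality of $[Y|Q]$ and $N(t)$ one gets $\gamma(t)^{\sfT}\dot\gamma(t) = K(t)^{\sfT}A K(t) = A$, the last equality since $A$ commutes with its own exponential. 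Hence $\dot\gamma(t)^{\sfT}\gamma(t) = -A$ for all $t$.

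Finally I would substitute into \cref{eq:stiefel_connect}. All factors containing $\eta^{\sfT}\gamma$, $\gamma^{\sfT}\eta$, $\eta^{\sfT}\dot\gamma$, or $\dot\gamma^{\sfT}\eta$ vanish, killing the $Y(\xi^{\sfT}\eta + \eta^{\sfT}\xi)$ term entirely and reducing the second summand to $(1-\alpha)\Delta(t)\dot\gamma(t)^{\sfT}\gamma(t) = -(1-\alpha)\Delta(t)A$. On the other hand, since $A$ commutes with $\expm(t(1-\alpha)A)$, $\dot\Delta(t) = (1-\alpha)\eta A\expm(t(1-\alpha)A) = (1-\alpha)\Delta(t)A$, so $\dot\Delta + \Gamma = 0$ as required. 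The main (mild) obstacle is the constancy of $\gamma^{\sfT}\dot\gamma$, but this follows at once from the closed geodesic formula and the antisymmetry of $A$.
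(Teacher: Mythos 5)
Your proposal is correct and follows essentially the same route as the paper's proof: reduce the hypotheses to $\eta^{\sfT}\begin{bmatrix}Y|Q\end{bmatrix}=0$, deduce $\Delta^{\sfT}\gamma=\Delta^{\sfT}\dot\gamma=0$ from the closed-form geodesic, so that the Christoffel term collapses to $(1-\alpha)\Delta\,\dot\gamma^{\sfT}\gamma=-(1-\alpha)\Delta A$, which cancels $\dot\Delta$. The only (harmless) additions are your explicit check of tangency and the constancy of $\gamma^{\sfT}\dot\gamma$, which the paper handles inline.
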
    
  \begin{proof}First, we show the condition $\eta$ is normal to $\cS$ is equivalent to $Y^{\sfT}\eta = 0$ and $\xi^{\sfT}\eta = 0$. We need to show $\langle \eta, YA_1 +QR_1\rangle_{\vbeta} = 0$ for all antisymmetric $A_1$ and $R_1\in \R^{k\times d}$ means $\eta^{\sfT}Y = 0$ and $\eta^{\sfT}Q =0$. But
    $\langle \eta, YA_1\rangle_{\vbeta} = \alpha\Tr \eta^{\sfT}Y A_1$ and $\langle \eta, QR_1\rangle_{\vbeta} = \Tr \eta^{\sfT}Q R_1$
and $\eta^{\sfT}Y$ is antisymmetric. Since the Frobenius form is nondegenerate on the space of antisymmetric matrices and on $\R^{k\times d}$, we have $\eta^{\sfT}Y =0$ and $\eta^{\sfT}Q=0$. The reverse direction is clear.

The parallel transport equation is $\dot{\Delta} + \Gamma(\gamma(t); \dot{\gamma}(t), \Delta(t)) = 0$. With $\gamma$ in \cref{eq:st_geo2}
$$\dot{\gamma}(t) = \begin{bmatrix}Y|Q\end{bmatrix}\expm(t\begin{bmatrix}2\alpha A & -R^{\sfT}\\ R & 0 \end{bmatrix} )
\begin{bmatrix}A & -R^{\sfT}\\ R & 0 \end{bmatrix}
\begin{bmatrix}\expm((1-2\alpha)t\bA)\\ 0_{k,d}\end{bmatrix}.
$$
Since $\gamma(t)$ and $\dot{\gamma(t)}$ stay in $\cS$, they are are normal to $\eta$. If $\Delta$ is given by \cref{eq:normalTransport} then $\Delta^{\sfT}\gamma = \Delta^{\sfT}\dot{\gamma}=0$ since for $S\in\cS$, $\Delta^{\sfT}S = \expm(-t(1-\alpha)A)\eta^{\sfT}S=0$. Substitute in \cref{eq:stiefel_connect}, we get $\Gamma(\dot{\gamma},\Delta) = (1-\alpha)\Delta \dot{\gamma}^{\sfT}\gamma$ and
$$\begin{gathered}
\dot{\Delta}+\Gamma(\dot{\gamma},\Delta)=\dot{\Delta} + (1-\alpha)\Delta \begin{bmatrix}\expm((1-2\alpha)t\bA)\\ 0_{k,d}\end{bmatrix}^{\sfT}
\begin{bmatrix}-A & R^{\sfT}\\ -R & 0 \end{bmatrix}
\begin{bmatrix}\expm((1-2\alpha)t\bA)\\ 0_{k,d}\end{bmatrix}\\
= \dot{\Delta} - (1-\alpha)\Delta e^{-(1-2\alpha)t\bA} A e^{(1-2\alpha)t\bA}=
(1-\alpha)\eta e^{t(1-\alpha)A}A - (1-\alpha)\Delta A =0
\end{gathered}$$
verifying the transport equation. 
\end{proof}    
Putting everything together, we have
\begin{theorem}For $Y\in \St{n}{d}$ and $\xi\in T_Y\St{n}{d}$, let $k\leq d$ be the rank of the column span of $\xi-YY^{\sfT}\xi$, and let $Q$ be an orthonormal basis of that span, take $Q=0$ if $\xi-YY^{\sfT}\xi=0$. Consider the decomposition in \cref{sec:isometry}
\begin{equation}  \xi = YA+QR.\end{equation}
Let $\eta\in T_Y\St{n}{d}$ be another tangent vector at $Y$. Then, the parallel transport $\Delta=\Delta(t)$ of $\eta$ along the geodesic $\gamma(t)$ with $\gamma(0) = Y, \dot{\gamma}(0) = \xi$ is given by 
    \begin{equation}\begin{gathered}
        \Delta(t) = \begin{bmatrix}Y|Q\end{bmatrix}\expm(t\begin{bmatrix}2\alpha A & -R^{\sfT}\\ R & 0\end{bmatrix})\expa(t\rP, \begin{bmatrix}Y|Q\end{bmatrix}^{\sfT}\eta) \expm(t(1-2\alpha) A) \\
        + (\eta - YY^{\sfT}\eta - QQ^{\sfT}\eta)\expm(t(1-\alpha)A)\label{eq:stieftran}.
\end{gathered}
      \end{equation}
Here, $\rP = \rP_{AR}$ is the operator on $\cW:=\AHerm_{p}\times \R^{k\times p}\subset \R^{(p+k)\times p}$ that maps $\ttW= \begin{bmatrix}\ttW_a \\ \ttW_r\end{bmatrix}\in\cW$ to $\rP(\ttW)$ as follows
\begin{equation}    
     \rP(\ttW) = \rP_{AR}(\ttW) := \begin{bmatrix}((4\alpha-1)\ttW_aA + R^{\sfT}\ttW_r)_{\asym}\\
       \alpha(\ttW_rA-R \ttW_a)\end{bmatrix}.\label{eq:rPStiefel}
\end{equation}     
\end{theorem}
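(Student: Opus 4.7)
My plan is to decompose $\eta$ into its component in the totally geodesic submanifold $\cM_{\cS}$ of \cref{sec:isometry} and a component normal to $\cS$, then transport each with a different already-established result. Set $\eta_{\cS} := YY^{\sfT}\eta + QQ^{\sfT}\eta$ and $\eta_{\perp} := \eta - \eta_{\cS}$. First I check $\eta_{\cS}\in T_Y\cM_{\cS}$: it sits in $\cS$ by inspection, and $Y^{\sfT}\eta_{\cS}=Y^{\sfT}\eta$ remains antisymmetric since $\eta\in T_Y\St{n}{d}$. Using $Y^{\sfT}Y=\dI_d$, $Q^{\sfT}Q=\dI_k$, $Y^{\sfT}Q=0$, direct substitution yields $Y^{\sfT}\eta_{\perp}=0$ and $Q^{\sfT}\eta_{\perp}=0$; by the criterion spelled out in \cref{lem:normal}, this is precisely the condition that $\eta_{\perp}$ is normal to $\cS$ under the pseudo-Riemannian metric \cref{eq:StiefelMetric}, and at the same time $\eta_{\perp}$ remains tangent to $\St{n}{d}$ at $Y$.

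Because parallel transport is linear, $\ttT(\gamma)_0^t\eta = \ttT(\gamma)_0^t\eta_{\cS} + \ttT(\gamma)_0^t\eta_{\perp}$. The second piece is handled immediately by \cref{lem:normal}, giving the summand $(\eta-YY^{\sfT}\eta-QQ^{\sfT}\eta)\expm(t(1-\alpha)A)$ of \cref{eq:stieftran}. For the first piece, since $\cM_{\cS}$ is totally geodesic the ambient transport equals the intrinsic transport on $\cM_{\cS}$, so I push the problem to $\St{d+k}{d}$ via the isometry $\ttF$ of \cref{eq:stfisometry}, under which $Y\mapsto\dI_{d+k,d}$, $\xi\mapsto\begin{bmatrix}A\\ R\end{bmatrix}$, and $\eta_{\cS}\mapsto[Y|Q]^{\sfT}\eta$. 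Viewing $\St{d+k}{d}$ as the submersion quotient $\SO(d+k)/\rK$ of \cref{sec:submerseStiefel} with $\fk=\fatop$, the simplified transport formula \cref{eq:transporthor2} applies. Substituting $\tta=\begin{bmatrix}A & -R^{\sfT}\\ R & 0\end{bmatrix}$, $\tta_{\fa}=\begin{bmatrix}A & 0\\ 0 & 0\end{bmatrix}$, $\beta=-2\alpha$ at $\bX=\dI_{d+k}$ in \cref{eq:transporthor} and right-multiplying by $\dI_{d+k,d}$ projects the transport to $\St{d+k}{d}$; the rightmost factor $\expm(t(1-2\alpha)\tta_{\fa})\dI_{d+k,d}$ collapses to $\dI_{d+k,d}\expm(t(1-2\alpha)A)$ because $\tta_{\fa}$ is block-diagonal, while the leading exponent becomes $\expm(t\begin{bmatrix}2\alpha A & -R^{\sfT}\\ R & 0\end{bmatrix})$. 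Left-multiplying by $F=[Y|Q]$ then returns to $\St{n}{d}$ and reproduces the first summand of \cref{eq:stieftran}.

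The remaining task is to identify the abstract operator $\rP_{\tta}$ on $\fm\subset\oo(d+k)$ of \cref{theo:mainhor} with the concrete $\rP=\rP_{AR}$ of \cref{eq:rPStiefel}, and to observe that $\bar{\eta}_{\cS}\dI_{d+k,d}=[Y|Q]^{\sfT}\eta$, which follows from the horizontal-lift formula \cref{eq:stiefellift} applied at $\dI_{d+k}$. Under the coordinate identification $\ttb = \begin{bmatrix}b_a & -b_r^{\sfT}\\ b_r & 0\end{bmatrix}\leftrightarrow w=\begin{bmatrix}b_a\\ b_r\end{bmatrix}$, a block expansion of $\rP_{\tta}\ttb=\tfrac12([\ttb,\tta]_{\fm} + (1-2\alpha)([\tta_{\fa},\ttb]-[\ttb_{\fa},\tta]))$ gives top-left block $\tfrac12((4\alpha-1)[b_a,A]+R^{\sfT}b_r - b_r^{\sfT}R)$, which equals $((4\alpha-1)w_aA + R^{\sfT}w_r)_{\asym}$ since $w_aA$ and $R^{\sfT}w_r$ antisymmetrize to these expressions, and bottom-left block $\alpha(w_rA-Rw_a)$ after the $(1-2\alpha)$ contributions cancel. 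The main obstacle is conceptual rather than computational: one must ensure that the simplified submersion formula actually applies here, which is granted by $\fk=\fatop$ in \cref{theo:mainhor}, and that $\eta_{\perp}$ is normal in the metric sense rather than merely the Frobenius sense, which is granted by the criterion inside \cref{lem:normal}. Once these are in place, the remainder is block-matrix bookkeeping.
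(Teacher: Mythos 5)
Your proposal is correct and follows essentially the same route as the paper's proof: decompose $\eta$ into $\eta_{\cS}=YY^{\sfT}\eta+QQ^{\sfT}\eta$ and its complement, transport the complement by \cref{lem:normal}, push $\eta_{\cS}$ through the isometry to $\St{d+k}{d}$ where the simplified submersion formula \cref{eq:transporthor2} applies (since $\fk=\fatop$), and recover \cref{eq:rPStiefel} by block expansion with $\beta=-2\alpha$. The block computations you sketch match the paper's, so there is nothing further to add.
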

\begin{proof}Decompose $\eta =  \eta_{\cS} + \eta_{\cS^o}$ with
$$\begin{gathered}\eta_{\cS} = YY^{\sfT}\eta + QQ^{\sfT}\eta,\\
  \eta_{\cS^o} = (\dI_n - YY^{\sfT})\eta - QQ^{\sfT}\eta.
\end{gathered}
  $$
We can verify $\eta_{\cS}$ is in $\cS$ and $Y^{\sfT}\eta_{\cS^o}=0$, $Q^{\sfT}\eta_{\cS^o}=Q^{\sfT}\eta - Q^{\sfT}\eta=0$, hence $\xi^{\sfT}\eta_{\cS^o}=0$. Thus, the theorem follows from \cref{lem:normal} applied to $\eta_{\cS^o}$ and the isometry in \cref{sec:isometry} applied to $\eta_{\cS}$, and \cref{sec:SOSE}. It remains to derive $\rP$ from \cref{eq:transporthor2}. With $\ttB = \begin{bmatrix} \ttB_A & -\ttB_R^{\sfT}\\\ttB_R & 0\end{bmatrix}$, $\ttV = \begin{bmatrix} \ttV_A & -\ttV_R^{\sfT}\\\ttV_R & 0\end{bmatrix}$ then
      $\cP_{\fm}\lbrack\ttB, \ttV\rbrack\dI_{d+k, d} = \begin{bmatrix} [\ttB_A, \ttV_A] - \ttB_R^{\sfT}\ttV_R + \ttV_R^{\sfT} \ttB_R \\ \ttB_R \ttV_A- \ttV_R\ttB_A \end{bmatrix}$,
      $\lbrack\ttpa\ttV, \ttB\rbrack\dI_{d+k, d} = \begin{bmatrix}[\ttV_A, \ttB_A] \\ -\ttB_R\ttV_A \end{bmatrix}$, and $[\ttpa\ttB, \ttV]\dI_{d+k, d} = \begin{bmatrix}[\ttB_A, \ttV_A] \\ -\ttV_R\ttB_A \end{bmatrix}$. Thus,     
$$\begin{gathered}
\frac{1}{2}\left( [\ttB_A, \ttV_A] +\ttV_R^{\sfT} \ttB_R  - \ttB_R^{\sfT}\ttV_R \right) - (1-2\alpha)[\ttB_A, \ttV_A] = ((4\alpha-1)\ttB_A\ttV_A + \ttV_R^{\sfT} \ttB_R)_{\asym},\\
\frac{1}{2}(\ttB_R\ttV_A-\ttV_R\ttB_A) + \frac{1-2\alpha}{2}( \ttV_R\ttB_A-\ttB_R\ttV_A )=\alpha(\ttB_R\ttV_A-\ttV_R\ttB_A)
      \end{gathered}$$
giving us the top and bottom rows of $\rP_{\ttV}\ttB$, with $A=\ttV_A, R=\ttV_R, \ttW_r = \ttB_R, \ttW_a = \ttB_A$.
\end{proof}
\begin{remark}
The transport formula can be evaluated with cost $O(C_1nd^2+C_2td^3)$ with $n > d$ because $Q, A, R$ could be computed with cost $O(nd^2)$ \cite[Corollary 2.2]{Edelman_1999}, the matrix exponential is of cost $O(d^3)$ (or more precisely $O(d^3\log_2t))$, the exponential action is of cost $O(td^3)$, and the matrix multiplications are $O(nd^2)$. Thus, the time complexity is $O(C_1nd^2+C_2td^3)$. As mentioned, for small $t$, the first $O(nd^2)$ term dominates the cost, whereas, for large $t$, the term $O(td^3)$ will dominate. We can say the algorithm is $O(nd^2)$ for fixed $t$. Since Stiefel manifolds are compact, there is an upper bound for distance between two points. Thus, there is an upper bound for reasonable times $t$.
\label{rem:cost}
\end{remark}
\begin{remark}In \cref{theo:hornatural}, the naturally reductive structure does not come directly from $\SO(n)/\SO(n-d)$, but from $\trG/\trK$ with $\trG = \SO(n)\times \SO(d)$ and $\trK= \{(\diag(H, K), H)|H\in\SO(d), K\in \SO(n-d)\}$. We denote an element $\begin{bmatrix}A & -B^{\sfT}\\B & 0\end{bmatrix}$ of $\fm$ by $\ttM = \llceil A, B\rrceil$ with $A\in\oo(d), B\in\R^{(n-d)\times d}$, then $\widetilde{\ttM} = (\llceil2\alpha A, B\rrceil, (2\alpha - 1)A)\in \tfmfq$ is the lift of $\ttM$. Set
$\ttM_i = \llceil A_i, B_i\rrceil$ for $i=1,2$ then $\cP_{\tfmfq} [\widetilde{\ttM}_1, \widetilde{\ttM}_2]$ is equal to
$$\begin{gathered} \cP_{\tfmfq}(\llceil 4\alpha^2[A_1, A_2] -B_1^{\sfT}B_2 + B_2^{\sfT}B_1, 2\alpha(B_1A_2 - B_2A_1) \rrceil, (2\alpha-1)^2[A_1, A_2] ) \\
= \mathsf{lift}\llceil (4\alpha-1)[A_1, A_2] -B_1^{\sfT}B_2 + B_2^{\sfT}B_1, 2\alpha(B_1A_2 - B_2A_1) \rrceil = \mathsf{lift}\; \mathsf{AS}(\ttM_1, \ttM_2)
\end{gathered}$$
where we simplify $4\alpha^2-(2\alpha-1)^2=4\alpha-1$, and $\mathsf{AS}(\ttM_1, \ttM_2) \in \fm$ is defined by the condition above. We can verify \cref{eq:NatReductiveInv} directly for the metric $\alpha \Tr A^{\sfT}A + \Tr B^{\sfT}B$ from 
$$\langle \ttM_1, \mathsf{AS}(\ttM_2, \ttM_3) \rangle_{\vbeta} =  \alpha\Tr( (1-4\alpha) A_1[A_2, A_3]  + 2A_1B_2^{\sfT}B_3 + 2A_2B_3^TB_1 + 2 A_3B_1^{\sfT}B_2)
$$
using $\Tr A_1B_2^{\sfT}B_3 = \Tr (A_1B_2^{\sfT}B_3)^{\sfT} = - \Tr A_1B_3^{\sfT}B_2$ and similar expressions. The right-hand side is cyclic invariant, implying \cref{eq:NatReductiveInv}. See also \cite[Theorem 2.1]{Tricerri}.
\end{remark}
\section{Flag manifolds with the canonical metric}\label{sec:flag}
Flag manifolds have found applications in data science in recent years, see \cite{YeLim,Szwagier,Pennec,Birdal} for surveys. Additional background for flag manifolds could be found in \cite{YeLim,NguyenOperator,NguyenGeodesic}. As we will see, the naturally reductive space construction in \cref{sec:submerse} works for the original bi-invariant (canonical) metric. We provide here a summary relevant to the computation.

With $n > d$, for a matrix $Y\in \St{n}{d}$, consider a partition of columns of $Y$ to $p$ blocks of sizes $(d_1,\cdots, d_p)$, $Y=\begin{bmatrix}Y_1|\cdots |Y_p\end{bmatrix}$, $Y_i\in \R^{n\times d_i}$. Consider the block diagonal group $\rK_{St} = \OO(d_1)\times \cdots \times \OO(d_p)\subset \OO(d)$, whose element $U=\diag(U_1,\cdots U_p)$ acts on $Y$ by right multiplication, $YU = \begin{bmatrix}Y_1U_1|\cdots |Y_pU_p\end{bmatrix}$.

A flag manifold is the quotient of $\St{n}{d}$ by this action \cite[section 4.2]{YeLim}. It is also typically expressed as a quotient of $\SO(n)$, allowing us to leverage our framework. Set $d_{p+1} = n-d$, the flag manifold $\St{n}{d}/\rK_{St}$ will be denoted by $\Flag(\vd)$, where $\vd=(d_1,\cdots, d_p,d_{p+1})$. We can also consider $\Flag(\vd)$ as a quotient of $\SO(n)$ by the block diagonal subgroup $\mathrm{S}(\OO(d_1)\times \cdots \times \OO(d_p)\times \OO(d_{p+1}))$ \cite{YeLim} ($\mathrm{S}$ denotes the subgroup of determinant $1$). 

Going forward, we will use the term {\it flag diagonal blocks} for this block structure. Set $\rG=\SO(n), \fg=\oo(n)$ and $\fk= \oo(d_1)\times \cdots \times \oo(d_p)\times \oo(d_{p+1})$. Hence, $\fk$ is a block diagonal Lie algebra of antisymmetric matrices, while its complement $\fm$ is the space of antisymmetric matrices with zero flag diagonal blocks. Any nontrivial block subalgebra $\fa$ would overlap  $\fk$, thus, to use the result of \cref{sec:submerse}, we will take $\fa=\{0\}$ and consider the naturally reductive manifold $\trG/\trK$ with $\trG=\rG$ and $\trK=\rK$. Note that this metric corresponds to $\alpha = \frac{1}{2}$ in \cref{sec:SOSE,sec:stiefel}, with $\beta_0= -\frac{1}{2}$ and $\rB=\frac{1}{2}\dI_{\fa}$.

To compute in the more efficient Stiefel coordinates, we proceed as before. In Stiefel coordinates, the Levi-Civita connection is given by \cite[eq. 6.4]{NguyenOperator}.
\begin{equation}
  \label{eq:flag_connect}
  \nabla^{\cH}_{\ttZ}\ttW= \rD_{\ttZ}\ttW +\bY(\ttZ^{\sfT}\ttW)_{\fsym} + \frac{1}{2}(\dI_n - YY^{\sfT})(\ttZ\ttW^{\sfT}+\ttW\ttZ^{\sfT})\bY.
\end{equation}  
Here, we use $\alpha=\frac{1}{2}$ and for a matrix $A\in \R^{d\times d}$, $A_{\fsym} = \frac{1}{2}(A + A^{\sfT}) + \frac{1}{2}\cP_{\fk}(A-A^{\sfT})$, the projection to $\fk$ is the operation extracting the diagonal blocks corresponding to $\oo(d_i)$, $i=1\cdots p$. Thus, we symmetrize $A$ but add back the antisymmetric parts of the flag diagonal blocks, hence $A_{\fsym}$ means symmetrizing $A$ but leaving the flag diagonal blocks unchanged. This accounts for the difference between the full tangent space of a Stiefel manifold and the horizontal space of a flag manifold (see the extra $\cP_{\fk}$ term in \cref{eq:maingammahor}). This is proved in \cite{NguyenOperator}, but we can also derive it from \cref{eq:maingammahor}.


Given an initial horizontal vector $\xi$ in $T_Y\St{n}{p}$, we can form the vector subspace $\cS\subset \R^{n\times d}$ with columns spanned by columns of $Y$ and $\xi$. Decompose $\xi = YA +QR$ with $Q\in \R^{n\times k}$, then $\cM_{\cS}$ is isometric via the map $V\mapsto F_{YQ}V$ to $\St{d+k}{k}$, where $F_{YQ} = \begin{bmatrix}Y|Q\end{bmatrix}^{\sfT}$ as before. The group $\rK_{St} = \OO(d_1)\times \cdots \times \OO(d_p)$ operates on $\cM_{\cS}$ and on $\St{d+k}{k}$. By associativity $F_{YQ}(VU) = (F_{YQ}V)U$ if $U\in \OO(d) \supset\rK_{St}$, thus horizontal spaces map to horizontal spaces. Therefore, flag geodesics and transports between these two spaces correspond, and we obtain geodesic formulas in Stiefel coordinates. Similarly, we can map parallel transports with initial horizontal vectors in $\cM_{\cS}$.

More specifically, let $\eta$ be a horizontal vector to be transported, we decompose $\eta = \eta_{\cS} + \eta_{\cS^o}$, where $\eta_{\cS^o}$ horizontal and normal to both $Y$ and $\cS$. Similar to the Stiefel case, the transport of $\eta_{\cS^o}$ is given by \cref{lem:normal}.
This is proved exactly like in the Stiefel case, using \cref{eq:flag_connect}, with $\eta_{\cS^o} = (\dI_n - YY^{\sfT})\eta - QQ^{\sfT}\eta$, then $\gamma^{\sfT}\Delta_{\cS^o} = \dot{\gamma}^{\sfT}\Delta_{\cS^o}=0$, and the rest follows by solving a linear ODE explicitly. Thus,
\begin{proposition}With the metric \cref{eq:StiefelMetric} for $\alpha=\frac{1}{2}$, at $Y\in \St{n}{p}$, let $\xi = YA + QR$ be a horizontal vector as in \cref{sec:stiefel}, the parallel transport of a horizontal vector $\eta\in T_Y\St{n}{p}$ of $\Flag(\vd)$ lifts to $\St{n}{p}$ by 
      \begin{equation}\begin{gathered}
        \Delta(t) = \begin{bmatrix}Y|Q\end{bmatrix}\expm(t\begin{bmatrix} A & -R^{\sfT}\\ R & 0\end{bmatrix})\expa(t\rP, \begin{bmatrix}Y|Q\end{bmatrix}^{\sfT}\eta)  \\
        + (\eta - YY^{\sfT}\eta - QQ^{\sfT}\eta)\expm(\frac{t}{2}A)\label{eq:flagtrans}.
\end{gathered}
      \end{equation}
With $\ttV = \begin{bmatrix} A &-R^{\sfT}\\R & 0\end{bmatrix}$ and $\ttW=\begin{bmatrix} \ttW_a & - \ttW_r^{\sfT}\\ \ttW_r & 0\end{bmatrix}$ then $\rP$ is given by
  \begin{equation}    
     \rP(\ttW) = \rP_{\ttV}(\ttW) := \begin{bmatrix}\cP_{\fm}(\ttW_aA + R^{\sfT}\ttW_r)_{\asym}\\
       \frac{1}{2}(\ttW_rA-R \ttW_a)\end{bmatrix}.\label{eq:rPFlag}
  \end{equation}      
\end{proposition}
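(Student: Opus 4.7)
The plan is to split $\eta$ along and transverse to the totally geodesic submanifold $\cM_\cS$, transport each piece separately, and sum the results. Set $\eta_{\cS} = YY^{\sfT}\eta + QQ^{\sfT}\eta \in \cS$ and $\eta_{\cS^o} = \eta - \eta_{\cS} = (\dI_n - YY^{\sfT})\eta - QQ^{\sfT}\eta$. A direct substitution verifies $Y^{\sfT}\eta_{\cS^o} = 0$ and $Q^{\sfT}\eta_{\cS^o} = 0$, so that $\xi^{\sfT}\eta_{\cS^o} = 0$ as well. Linearity of parallel transport then reduces the proof to two independent computations, one for each summand.

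For the transverse piece $\eta_{\cS^o}$, I would repeat the argument of \cref{lem:normal} along the same lines, substituting the flag Levi-Civita connection \cref{eq:flag_connect} for the Stiefel connection \cref{eq:stiefel_connect}. The only difference between the two connections is the replacement of $\sym$ by $\fsym$ in the first term; with the ansatz $\Delta(t) = \eta_{\cS^o}\expm(\tfrac{t}{2}A)$, however, one still has $\Delta(t)^{\sfT}\gamma(t) = 0$ and $\Delta(t)^{\sfT}\dot{\gamma}(t) = 0$ throughout the geodesic, so the $Y(\dot{\gamma}^{\sfT}\Delta)_{\fsym}$ contribution vanishes regardless of which symmetrizer is used. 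The remaining verification collapses to the same constant-coefficient linear ODE as in the Stiefel case and produces the second summand of \cref{eq:flagtrans}.

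For the $\cS$-component $\eta_{\cS}$, I would use the isometry $\ttF: V\mapsto F_{YQ}^{\sfT}V$ onto $\St{d+k}{d}$. Because $F_{YQ}(VU) = (F_{YQ}V)U$ for $U \in \SO(d) \supset \rK_{\fa}$, the isometry intertwines the right $\rK_{\fa}$-actions and descends to an isometry of the flag quotient structures, so horizontal lifts, horizontal geodesics, and parallel transports of horizontal vectors correspond. With $\alpha = \tfrac{1}{2}$ one has $\beta = -1$, so the factor $U(t) = \expm(t(1+\beta)\tta_{\fa})$ in \cref{theo:mainhor} collapses to the identity, the simplified transport condition is automatic, and $\rP_{\tta}$ on $\fm \subset \oo(d+k)$ reduces to $\ttb \mapsto \tfrac{1}{2}[\ttb,\tta]_{\fm}$. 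Transporting inside $\St{d+k}{d}$ and lifting back through $F_{YQ}$ yields the first summand of \cref{eq:flagtrans}.

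The final step, and the only place where a mild book-keeping obstacle arises, is to recover the explicit block form \cref{eq:rPFlag}. A routine $2 \times 2$ block computation of the commutator gives top-left block $[w_a,A] + R^{\sfT}w_r - w_r^{\sfT}R$, bottom-left block $w_rA - Rw_a$, and bottom-right block $Rw_r^{\sfT} - w_rR^{\sfT}$. Multiplying by $\tfrac{1}{2}$ and using $A^{\sfT} = -A$, $w_a^{\sfT} = -w_a$ to rewrite $\tfrac{1}{2}([w_a,A] + R^{\sfT}w_r - w_r^{\sfT}R) = (w_aA + R^{\sfT}w_r)_{\asym}$, then applying the $\fm$-projection (which zeros the $\oo(d_1),\dots,\oo(d_p)$ flag diagonal blocks inside the top-left and the entire $\oo(k)$ bottom-right block) delivers exactly \cref{eq:rPFlag}. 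One only has to check that $\rP_{\tta}$ preserves the ansatz shape $\bigl[\begin{smallmatrix}\cdot & -\cdot^{\sfT}\\ \cdot & 0\end{smallmatrix}\bigr]$, which is automatic from the definition of $\fm$ together with the antisymmetry of $\tfrac{1}{2}[\ttb,\tta]_{\fm}$.
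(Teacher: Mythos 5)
Your proposal is correct and follows essentially the same route as the paper: the same decomposition $\eta=\eta_{\cS}+\eta_{\cS^o}$, the transport of $\eta_{\cS^o}$ via the argument of \cref{lem:normal} adapted to \cref{eq:flag_connect}, and the transport of $\eta_{\cS}$ via the isometry onto $\St{d+k}{d}$ together with the $\beta=-1$ simplification $\rP_{\tta}\ttb=\frac{1}{2}[\ttb,\tta]_{\fm}$. The only cosmetic difference is that you derive \cref{eq:rPFlag} by computing the block commutator directly, whereas the paper obtains it by specializing \cref{eq:rPStiefel} to $\alpha=\frac{1}{2}$ and inserting the $\fm$-projection; the two computations are identical in substance.
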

The proof is by substituting in \cref{eq:transporthor,eq:transporthor2} then take the first $d$ columns as in the proof of \cref{eq:rPStiefel}. The operator $\cP_{\fm}$ in \cref{eq:rPFlag} sets the flag diagonal blocks to zero. 
\begin{remark}We can equip the quotient $\St{n}{d}/\rK_{St}$ with the deformation metric on $\St{n}{d}$ for $\alpha \neq \frac{1}{2}$, giving the flag manifold a different metric. It can be shown by substitution that in this case, while the Levi-Civita connection and the geodesics are of similar formats, the part of the parallel transport equation corresponding to the exponential action becomes the solution of an ODE with nonconstant coefficients.    
\end{remark}
\section{Numerical studies}\label{sec:numer}We implemented the formulas in Python. The tables and graphs are produced from Google colab workbooks in the folder \href{https://github.com/dnguyend/par-trans/tree/main/examples}{{\it examples}} in \cite{NguyenSomParTrans} on free sessions with two Xenon CPUs and $13$GB of RAM.

In \cref{alg:Parallel}, we describe the parallel transport algorithm for Stiefel and flag manifolds (with $\alpha=\frac{1}{2}$ for flag manifolds). In the decomposition $\xi = YA + QR$, $A = Y^{\sfT}\xi$, while $Q$ and $R$ could be computed by the SVD or QR decomposition of $\xi - YY^{\sfT}\xi$.
We call \cref{alg:expv} to calculate $W$ in the parallel transport. As indicated in \cref{sec:expact}, instead of computing $\expa(\rP_{AR, \cM}, B)$ directly (the operator denotes the map $\rP_{AR}$ in either \cref{eq:rPStiefel} or \cref{eq:rPFlag} based on $\cM$), we define the function $\sfS_c: \begin{bmatrix}T\\ B\end{bmatrix} \mapsto \begin{bmatrix}c T\\B\end{bmatrix}$ scaling the top block of a matrix, then compute the exponential of $\rP^{bal}_{AR, \cM} = \sfS_{\alpha^{\frac{1}{2}}}\circ\rP_{AR, \cM}\circ\sfS_{\alpha^{-\frac{1}{2}}}$, which is antisymmetric in the Frobenius norm (see \cref{lem:customP}), and has better convergence behavior. The number of Taylor terms ($m^*$) and the scaling factor $(s^*)$ are based on \cite{HighamExpAction}, as discussed in \cref{sec:expact}.
\begin{algorithm}
\caption{Parallel transport algorithm for Stiefel and flag manifolds}
\label{alg:Parallel}
\begin{algorithmic}[1]
\Require $Y, \xi$ and the manifold $\cM$.
\State{$A, Q, R \gets YQ\_basis(Y, \xi)$}\;\hfill{Express $\xi = YA+QR$ as in \cref{lem:isometry}}
\State $norm\_est \gets calc\_norm\_est(A, R)$;\hfill{The estimate in \cref{lem:customP}}
\State{$m^*, s^* \gets lookup\_m\_s(t\times norm\_est)$}\;\hfill{Based on table 3.1 in \cite{HighamExpAction}}

\State{$W\gets \mathsf{S}_{\alpha^{-\frac{1}{2}}}\expa(\rP^{bal}_{AR,\cM}, \sfS_{\alpha^{\frac{1}{2}}}\begin{bmatrix} Y^{\sfT}\eta\\ Q^{\sfT}\eta \end{bmatrix}, t, m^*, s^*)$}\hfill{Using \cref{alg:expv}}
\State{\Return{$\begin{gathered}\lbrack Y|Q\rbrack\expm(t\begin{bmatrix} 2\alpha A & -R^{\sfT} \\ R & 0\end{bmatrix}) W e^{t(1-2\alpha)A}
+ (\eta - YY^{\sfT}\eta - QQ^{\sfT}\eta)e^{t(1-\alpha)A}.\end{gathered}$}}
\end{algorithmic}
\end{algorithm}    

In \cref{tab:byn}, we show the computational time along a time grid, for different $n$ and $\alpha\in \{\frac{1}{2}, 1\}$ for Stiefel manifolds, and for $\alpha=\frac{1}{2}$ for flag manifolds. The table generally shows sublinear growths in execution time in $n$ and $t$, with large $t$ behavior closer to linear. This is consistent with the $O(C_1nd^2 + C_2td^3)$ cost. \cref{tab:byd} shows execution time by $d$, also along the same grid. The cost is sub-cubical in $d$, and is closer to cubical as $t$ increases, again, consistent with a sum of $d^2$ and $d^3$ terms.

In \cref{fig:stiefel_n_d}, we show the approximate $O(nd^2)$ cost for small $t$ and larger range of $d$ and $n$ for \href{https://github.com/dnguyend/par-trans/blob/main/examples/NumpyStiefelParallel.ipynb}{Stiefel manifolds} (the situation for \href{https://github.com/dnguyend/par-trans/blob/main/examples/NumpyFlagParallel.ipynb}{flag manifolds} is similar), with $d$ up to $500$ and $n$ up to $16000$. The graph shows a sub-quadratic cost (without the exponential action component) in $d$ (values at $d = 100, 200, 400$ are	$0.100, 0.283$, and $0.882$). The cost in $n$ is also sublinear (values at $n=1000, 2000, 4000$, $8000$ and $16000$ are $0.0162, 0.0203, 0.0307, 0.0458$, and $0.0802$, respectively). The fact that we do not get exactly quadratic and linear costs is likely due to fixed time cost incurred in the computer operation.

\begin{figure}
  \centering
\begin{subfigure}{.5\textwidth}
  \centering
  \includegraphics[width=.9\linewidth]{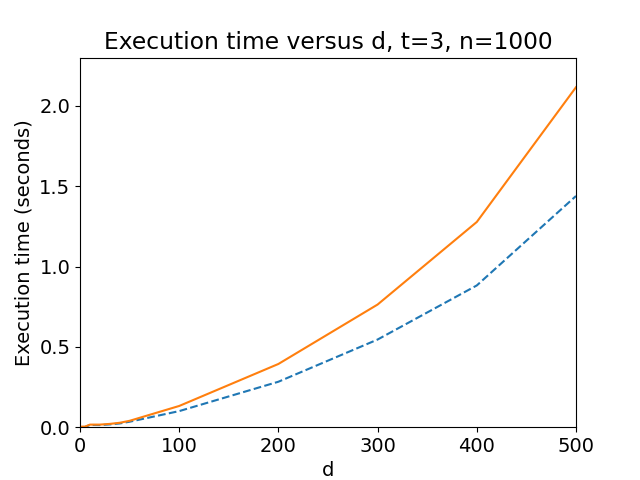}
  \caption{Time versus $d$}
  \label{fig:stief_d}
\end{subfigure}%
\begin{subfigure}{.5\textwidth}
  \centering
  \includegraphics[width=.9\linewidth]{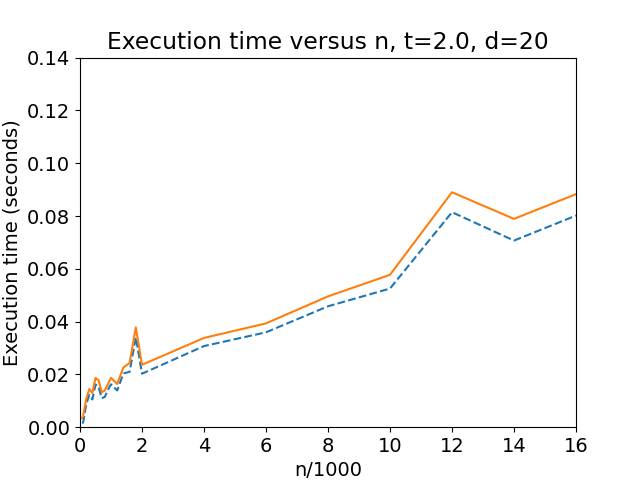}
  \caption{Time versus $n$}
  \label{fig:stief_n}
\end{subfigure}%
\caption{Execution time versus $n$ and $d$ for Stiefel manifolds. The dotted lines are times without the cost of the exponential action. The solid lines are with the exponential action.
On the left, we fix $n=1000$ and let $d$ vary from $2$ to $500$. The graph is approximately quadratic in $d$.
On the right, we set $d=20$ and let $n$ vary from $70$ to $16000$. The graph has a linear trend in $n$. Note the gap due to the exponential action increases with $d$ but is relatively constant versus $n$, consistent with the cost $O(td^3)$.}
\label{fig:stiefel_n_d}  
\end{figure}

Using automatic differentiation, in \cite{NguyenSomParTrans} we can verify that these correspond to highly accurate computation, depending on the manifold, with the transport equation satisfied with absolute accuracy around $10^{-10}$ to $10^{-12}$.
\begin{table}
  \small
\begin{tabular}{l|l|rrrrr|rrrrr}
\toprule
&  & \multicolumn{5}{c|}{Stiefel man. comp. time } & \multicolumn{5}{c}{Flag man. comp. time.} \\
\midrule
&  & \multicolumn{5}{c|}{Geodesic time grid $t$ } & \multicolumn{5}{c}{Geodesic time grid $t$} \\
$\alpha$ & n & 0.5 & 1.0 & 2.0 & 5.0 & 20.0 & 0.5 & 1.0 & 2.0 & 5.0 & 20.0 \\
\midrule
\multirow[t]{4}{*}{$\frac{1}{2}$} & 100 & 0.05 & 0.10 & 0.15 & 0.25 & 0.69 &  0.06 & 0.08 & 0.15 & 0.22 & 0.57  \\
 & 200 &  0.08 & 0.12 & 0.17 & 0.32 & 0.80  &    0.09 & 0.13 & 0.17 & 0.30 & 0.75 \\
 & 1000 & 0.15 & 0.18 & 0.26 & 0.47 & 1.45 &   0.14 & 0.18 & 0.26 & 0.49 & 1.41 \\
& 2000 & 0.18 & 0.24 & 0.34 & 0.59 & 1.78 & 0.14 & 0.25 & 0.32 & 0.54 & 1.93 \\
\cline{1-12}
\multirow[t]{4}{*}{1} & 100 & 0.09 & 0.16 & 0.22 & 0.50 & 1.15 \\
 & 200  &  0.12 & 0.17 & 0.25 & 0.45 & 1.44  &&&&& \\
 & 1000 & 0.18 & 0.21 & 0.35 & 0.68 & 2.36 &&&&&\\
 & 2000&  0.21 & 0.28 & 0.41 & 0.85 & 2.88 &&&&&\\
\cline{1-12}
\bottomrule
\end{tabular}  
\caption{Computational time for parallel transport for Stiefel and flag manifolds by $n$ for $d = 50$. The flags are $(20, 15, 15, n-d)$. The canonical metric corresponds to $\alpha=\frac{1}{2}$, the Euclidean embedded metric corresponds to $\alpha=1$. The table shows sublinear growth in execution time as $n$ and $t$ increase. Execution is on a free colab session. All time units are in second(s).}
\label{tab:byn}
\end{table}

\begin{table}
  \small
\begin{tabular}{l|l|rrrrr|rrrrr}
\toprule
&  & \multicolumn{5}{c|}{Stiefel man. comp. time} & \multicolumn{5}{c}{Flag man. comp. time.} \\
\midrule
&  & \multicolumn{5}{c|}{Geodesic time grid $t$} & \multicolumn{5}{c}{Geodesic time grid $t$} \\
$\alpha$ & d & 0.5 & 1.0 & 2.0 & 5.0 & 20.0 & 0.5 & 1.0 & 2.0 & 5.0 & 20.0 \\
\midrule
\multirow[t]{4}{*}{$\frac{1}{2}$} & 5& 0.01 & 0.02 & 0.04 & 0.08 & 0.25  &&&&&\\
 & 10 & 0.03 & 0.06 & 0.10 & 0.15 & 0.43  &  0.03 & 0.05 & 0.08 & 0.15 & 0.43 \\
 & 20 & 0.05 & 0.08 & 0.13 & 0.21 & 0.53  &  0.05 & 0.07 & 0.15 & 0.21 & 0.54 \\
& 100 & 0.42 & 0.60 & 0.99 & 2.55 & 9.18 & 0.45 & 0.57 & 1.13 & 2.35 & 8.60 \\
\cline{1-12}
\multirow[t]{4}{*}{1} &5 &  0.01 & 0.02 & 0.04 & 0.10 & 0.37 &&&&&\\
 & 10 & 0.04 & 0.07 & 0.13 & 0.23 & 0.61&&&&&\\
 & 20 &  0.07 & 0.10 & 0.15 & 0.25 & 0.73  &&&&&\\
 & 100 & 0.60 & 0.92 & 1.68 & 3.92 & 15.80 &&&&&\\
\cline{1-12}
\bottomrule
\end{tabular}  
\caption{Computational time for parallel transport for Stiefel and flag manifolds by $d$ for $n = 1000$. We scale the partial flag of $(d_1, d_2, d_3)=(4, 3, 3)$ by $\frac{d}{4+3+3}$ to $d$ for flag manifolds, setting $d_4=n-d$. Computational time is subcubical in $d$, consistent with the cost $O(C_1nd^2+td^3)$. For large $t$, the trend is approximately linear in $t$. All time units are in seconds.}
\label{tab:byd}
\end{table}

\subsection{Parallel transport as isometries}We also demonstrate numerically the isometric property of parallel transport as implemented by our method, by showing the inner product matrix of a set of vectors is preserved during transportation. We consider a geodesic starting at $\dI_{2000,100}$, with twenty tangent vectors of integer lengths sampled from integers between $1$ and $60$, while the relative angles are randomly generated. The geodesic initial velocity vector $v$ has length $1$. We compute the inner product matrix of the random tangent vectors at $t=0$ and of the transported vectors along a time grid. We measure the maximum absolute difference between the inner product matrices at each grid point and the initial point and plot the $\log_{10}$ of the differences in \cref{fig:isometry}. The absolute errors are around $10^{-12}$ up to $t=15$.
\begin{figure}
  \centering
\begin{subfigure}{.5\textwidth}
  \centering
  \includegraphics[width=1.05\linewidth]{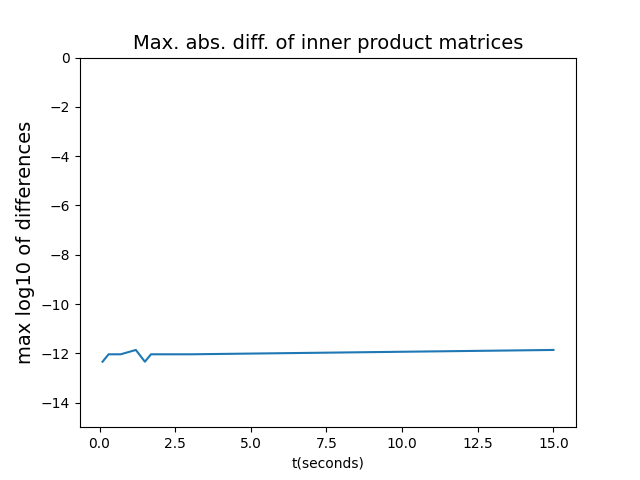}
  \caption{Stiefel manifolds}
  \label{fig:stief}
\end{subfigure}%
\begin{subfigure}{.5\textwidth}
  \centering
  \includegraphics[width=1.05\linewidth]{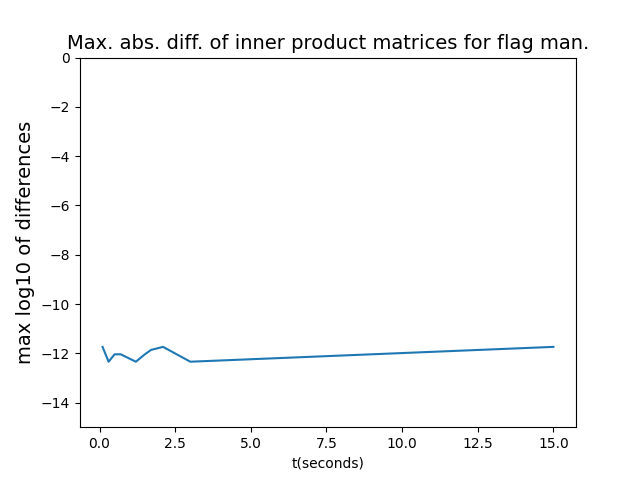}
  \caption{Flag manifolds}
  \label{fig:flag}
\end{subfigure}%
\caption{Isometry of parallel transport of Stiefel and flag manifolds. We transport a set of $20$ vectors from an initial point on $\St{2000}{100}$, with $\alpha=1$ along a geodesic, and compute the maximum difference between the inner product matrices of these $20$ vectors at $t=0$ versus $t\in \{0.1, .3, .5, .7, 1.2, 1.5, 1.7, 2.1, 3., 15.\}$. On the right-hand side, we consider $\Flag(50, 20, 30, 1900)$ with the canonical metric $\alpha=\frac{1}{2}$. Up to $t=15$ seconds, the differences are in the order of $10^{-12}$.}
\label{fig:isometry}  
\end{figure}
\section{Discussion}In this paper, we provide an efficient algorithm to compute parallel transport on Stiefel and flag manifolds, as well as transport formulas for the general linear group and the special orthogonal group. We also provide a framework for parallel transports on groups with a deformation metric based on a pair of Lie algebras with bi-invariant metrics, in particular, a pair of transposable Lie algebras. We believe the framework developed here will be useful for other manifolds in applied mathematics.
\section*{Acknowledgments}
We wish to thank the two anonymous reviewers and the editor for the careful review and numerous suggestions that helped us improve the
article significantly. In particular, we are grateful for one reviewer’s strong suggestion to use the reductive space approach, which pointed us in the direction of establishing the naturally reductive property. The new approach gives us a much clearer perspective of the transport formula, obtained by solving the transport equation directly in our first version. The same reviewer’s suggestion also leads to a better derivation of the Christoffel functions from the geodesic equations. We thank him\slash her for the references \cite{Rentmee,MataigneZimmermannMiolane} in relation to \cref{lem:isometry} as well as the observation that the algorithm could be considered $O(nd^2)$ for bounded time. Needless to say, any remaining mistake is our sole responsibility.

Du Nguyen would like to thank his family for their loving support in this project. He thanks Mr. Tom Szwagier for feedback on an earlier version of the article. The work was supported by a research grant (VIL40582) from VILLUM FONDEN, and the Novo Nordisk Foundation grants NNF24OC0093490 and NNF24OC0089608.

\appendix
\section{Exponential action}\label{sec:expact}The solution of a linear ODE with constant coefficient $\dot{X} = AX$ and initial condition $X(0) = X_0$ is given by $\expm(tA)X_0$, where typically $X$ is a vector and $A$ is a matrix, where $\expm$ is the matrix exponential. We often need to treat the case where $X_0$ is a matrix and $A$ is an operator on a matrix space. In that situation, which is the situation encountered in this article, computing the exponential by vectorization is usually inefficient. The better alternative is to compute the {\it exponential action}, denoted  by $\expa(tA, X_0)$. Instead of evaluating the full matrix exponential, we approximate the action $\expm(tA)X_0$ more efficiently, often \cite{HighamExpAction} by evaluating a {\it scaling and squaring} of a Taylor expansion of the operator exponential operating on $X_0$.

Specifically, the popular library algorithm for exponential action in Python (function $expm{\_}multiply$ in the scipy package \cite{scipy-nmeth} and similar functions for other languages, $expmv$ in MATLAB and $expv$ in Julia) are based on the work of Al-Mohy and Higham \cite{HighamExpAction}, approximating $e^A B= (e^{1/sA})^s B \approx (T_m(s^{-1} A))^sB$, where $T_m$ denotes the $m$ terms Taylor series expansion of $e^A$. The cited paper provides an algorithm to find a pair of optimal $(m, s)$, based on estimates of $\|A^p\|^{1/p}_1$ for several values of $p$. The paper found the higher $p$ estimates are beneficial when $A$ is not normal, for example, when $A$ is upper triangular. These estimates are used to determine the Taylor order $m$.

The cost is thus a multiple of the cost of evaluating $Av$ for a vector (or matrix) $v$ of the same size with $X_0$. The multiplier is dependent on the required accuracy and $|t|\|A\|$, for an appropriate norm $\|A\|$. In library implementations, $A$ could be given as a function. To estimate the norm, we also need to evaluate the transpose $A^{\sfT}$.

In these implementations, the $1$-norm is estimated using the algorithm in \cite{HighamTisseur}. We will use $expm{\_}multiply$ from scipy for $\GL(n)$ and $\SO(n)$, while we will show a customized implementation based on efficient estimates of the $1$-norm of the operator $\rP$ for Stiefel and flag manifolds in \cref{lem:customP} could speed up the algorithm significantly, without the higher $p$ estimates. In particular, we show for these manifolds, after rescaling, the operator $\rP$ becomes antisymmetric in the Frobenius norm. Numerically, in this case, the estimate using just the 1-norm $\|\rP\|_1$ is adequate.

Our next observation is for our problem, $\|\rP\|_1$ can be estimated more efficiently than by calling a library function. Recall that on a $n$-dimension vector space with a basis $\{e_i\}_{i=1}^n$, the 1-norm of a vector $\sum c_i e_i$ is $\sum_i|c_i|$. Vectorizing an operator $\rP$ using this basis, the operator $1$-norm is given by $\|\rP\|_1= \max_j |\rP e_j|_1$, which has a cost of $O(n^2)$, thus, it is $O(m^2d^2)$ on a matrix space $\R^{m\times d}$. The sampling in \cite{HighamTisseur} reduces this cost, but we can reduce it further by exploiting the structure of $\rP$.
For the Stiefel and flag manifold cases, we provide an estimate based on norms of $A$ and $R$. The resulting customized implementation of exponential action could be more than $10$ times faster than the library version, significantly reducing the total time cost of the parallel transport calculation. To keep focus, we do not attempt these estimates of $\|\rP\|_1$ for $\GL^+(n)$ and $\SO(n)$ (the latter uses a similar estimate to Stiefel manifolds). Likewise, we will not discuss the scalings to make $\rP$ antisymmetric in the Frobenius norm for these manifolds as in \cref{lem:customP} below.

Further, the scipy function $expm{\_}multiply$ is not yet implemented in JAX \cite{jax2018github}, a Python library that provides easy compilation to run in GPU and allows automatic differentiation (AD). We use our estimates to implement a simple version of $expv$ in JAX, and use the AD feature to compute the derivative of the parallel transport, verifying that our implementation is highly accurate. We offer $expm{\_}multiply$-based parallel transport for all four manifolds while providing customized implementations for Stiefel and flag manifolds, with AD(JAX) and without-AD versions. The discussion here will likely apply to other languages using similar algorithms. 

First, for a real number $c$, define $\sfS_{c}$ to be the operator on $\R^{m\times d}$ with $m \geq d$, scaling the top $d\times d$ block by $c$ and preserving the bottom $(m-d)\times d$ block. We have $\expm \rP = \sfS_c^{-1}\expm (\sfS_c\rP \sfS_c^{-1})\sfS_c$ for an operator $\rP$ on $\R^{m\times d}$. The following computes an upper bound for $\|\sfS_{\alpha^{\frac{1}{2}}}\rP_{AR} \sfS_{\alpha^{-\frac{1}{2}}}\|_1$ with a cost of $O(d^2)$:
\begin{lemma}\label{lem:customP}
  For $k\geq 0$ and $A=(a_{ij})_{ij=(1,1)}^{(d,d)}\in\oo(d), R=(r_{ij})_{ij=(1,1)}^{(k,d)}\in\R^{k\times d}$, with respect to the standard basis of $\R^{(d+k)\times d}$, the operator $\rP_{AR}^{bal} := \sfS_{\alpha^{\frac{1}{2}}}\circ \rP_{AR}\circ \sfS_{\alpha^{-\frac{1}{2}}}$ with
  $\rP_{AR}$ in \cref{eq:rPStiefel} operating on $\R^{(d+k)\times d}$ has an upper bound on the $1$-norm
  \begin{equation}\begin{gathered}
    \|\rP_{AR}^{bal}\|_1 = \max_{|\ttW|_1=1} \lvert \rP_{AR}^{bal}\ttW \rvert_1 \leq
    \max(n_A, n_R)\\
    \text{where }
    n_A = \max_j(\alpha^{\frac{1}{2}}\sum_{i=1}^d(\lvert r_{ij}\rvert
                          + \lvert 4\alpha-1\rvert\|A\|_1)),\\
                          n_R = \max_j(\alpha\sum_i(\lvert a_{ij}\rvert + \alpha^{\frac{1}{2}}\|R\|_{\infty})).
    \end{gathered}    
  \end{equation}
Also, define the operator $\mathsf{skew}_{top}$ on $\R^{(d+k)\times d}$ to be the map antisymmetrizing the top $d\times d$ block, then
$\rP_{AR}^{bal}\circ \mathsf{skew}_{top}: \begin{bmatrix}\ttW_a\\\ttW_r\end{bmatrix}\mapsto \rP_{AR}^{bal}\begin{bmatrix}(\ttW_a)_{\asym}\\ \ttW_r \end{bmatrix}$  is antisymmetric in the Frobenius inner product. 
With $\rP^{\Flag}_{AR}$ given in \cref{eq:rPFlag} at $\alpha=\frac{1}{2}$, we have the same bound
for $\sfS_{\alpha^{\frac{1}{2}}}\circ \rP^{\Flag}_{AR}\circ \sfS_{\alpha^{-\frac{1}{2}}}$.  
\end{lemma}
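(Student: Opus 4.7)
The plan is to unfold the conjugated operator $\rP_{AR}^{bal}=s_{\alpha^{1/2}}\circ\rP_{AR}\circ s_{\alpha^{-1/2}}$ in closed form, then exploit that on a matrix space with the standard basis the induced $1$-norm satisfies $\|\rP\|_1=\max_{e}|\rP e|_1$ where $e$ ranges over standard basis matrices. This reduces the whole estimate to evaluating $\rP_{AR}^{bal}$ on the two families of elementary matrices $E^a_{ij}$ (a single $1$ at position $(i,j)$ in the top $d\times d$ block) and $E^r_{ij}$ (a single $1$ in the bottom $k\times d$ block). For the antisymmetry assertion, the key observation is that $s_{\alpha^{1/2}}$ has been designed so that it is an isometry from the horizontal space $\fm$ equipped with $\langle\cdot,\cdot\rangle_{\vbeta}$ onto $\fm$ equipped with the Frobenius pairing; transporting the $\vbeta$-antisymmetry of \cref{eq:antisymmetric} through this isometry is then essentially the whole argument.

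Substituting the two scalings directly yields
$$\rP_{AR}^{bal}\begin{bmatrix} w_a\\ w_r\end{bmatrix}=\begin{bmatrix}\bigl((4\alpha-1)w_a A+\alpha^{1/2}R^{\sfT} w_r\bigr)_{\asym}\\ \alpha\,w_r A-\alpha^{1/2}R\,w_a\end{bmatrix},$$
with the symmetric appearance of $\alpha^{1/2}$ in the cross terms being the reason for calling the scaling balanced. Plugging in $E^a_{ij}$, the product $E_{ij}A$ is supported in row $i$ with entries equal to the $j$-th row of $A$, and $RE_{ij}$ is supported in column $j$ with entries equal to the $i$-th column of $R$; the elementary bound $|M_{\asym}|_1\leq|M|_1$ plus the triangle inequality then give an estimate of the shape $|4\alpha-1|\cdot(\text{row sum of }A)+\alpha^{1/2}\cdot(\text{column sum of }R)$, which regroups into $n_A$. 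The analogous calculation on $E^r_{ij}$ uses a row sum of $R$ and a column sum of $A$ and lands on $n_R$. Taking the max over both families yields $\max(n_A,n_R)$, and the stated $O(d^2)$ cost is visible because only row- and column-sums of $A$ and $R$ appear.

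For antisymmetry I would observe that on $\fm$ the form reads $\langle w,w'\rangle_{\vbeta}=\alpha\langle w_a,w'_a\rangle_F+\langle w_r,w'_r\rangle_F$, which is manifestly $\langle s_{\alpha^{1/2}}w,s_{\alpha^{1/2}}w'\rangle_F$, so $s_{\alpha^{1/2}}$ is indeed the claimed isometry. The image of $\rP_{AR}$ already lies in $\fm$ by construction of its top block, and $\mathsf{skew}_{top}$ is the Frobenius orthogonal projection onto $\fm$; hence testing Frobenius-antisymmetry of $\rP_{AR}^{bal}\circ\mathsf{skew}_{top}$ against an arbitrary $w'$ is equivalent to testing it against $\mathsf{skew}_{top}(w')\in\fm$, and the claim reduces to \cref{eq:antisymmetric} translated through the isometry. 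The flag version is immediate: at $\alpha=\tfrac{1}{2}$ the operator $\rP^{\Flag}_{AR}$ differs from $\rP_{AR}$ only by the additional projection $(\cdot)_{\fm}$ on the top block, which sets the flag-diagonal entries to zero and therefore can only decrease each $|\rP^{bal}(e)|_1$ on basis vectors.

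The main obstacle I anticipate is purely the bookkeeping of $\alpha^{1/2}$ factors in the evaluation on basis elements: they come from both $s_{\alpha^{1/2}}$ and $s_{\alpha^{-1/2}}$, partially cancel in the top-to-top and bottom-to-bottom terms, and survive asymmetrically in the coupling terms, so the resulting grouping must land exactly on the form of $n_A$ and $n_R$ rather than being merely loosely comparable. Once the grouping is aligned, the actual inequalities invoked are no sharper than the triangle inequality and $|M_{\asym}|_1\leq|M|_1$, so no further machinery should be required.
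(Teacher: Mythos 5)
Your proposal is correct, and the norm-bound half is essentially the paper's own argument: both unfold $\rP_{AR}^{bal}$ into the same closed form, evaluate on the elementary matrices $E_{ij}$ of the top and bottom blocks, and bound via the triangle inequality (the paper tracks the supports of the two terms in the antisymmetrization explicitly, noting a possible cancellation when $i=r=s$, but then discards it and lands on exactly the row-sum/column-sum grouping you describe). The flag case is also handled identically in both arguments: projection onto $\fm$ clears entries and cannot increase the $1$-norm of any image of a basis vector.

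Where you genuinely diverge is the antisymmetry claim. The paper proves it by direct block-wise verification: the diagonal blocks $w_a\mapsto(4\alpha-1)((w_a)_{\asym}A)_{\asym}$ and $w_r\mapsto\alpha w_rA$ are antisymmetric because $A$ is, and the off-diagonal maps $F_1: w_r\mapsto\alpha^{1/2}(R^{\sfT}w_r)_{\asym}$ and $F_2: w_a\mapsto-\alpha^{1/2}R(w_a)_{\asym}$ satisfy $F_1=-F_2^{\sfT}$. You instead observe that on the horizontal space the metric \cref{eq:StiefelMetric} reads $\alpha\langle w_a,w_a'\rangle_F+\langle w_r,w_r'\rangle_F=\langle s_{\alpha^{1/2}}w,s_{\alpha^{1/2}}w'\rangle_F$, so $s_{\alpha^{1/2}}$ is an isometry onto the Frobenius pairing, and you transport the $\vbeta$-antisymmetry already established in \cref{theo:mainhor} (the quotient analogue of \cref{eq:antisymmetric}, valid here since $\fk\subset\fatop$) through it; composing with $\mathsf{skew}_{top}$, the Frobenius-orthogonal projection onto $\fm$, then extends the antisymmetry to all of $\R^{(d+k)\times d}$ because the image of $\rP_{AR}$ already lies in $\fm$. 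This argument is sound (one should be slightly careful that the identification of $\fm\subset\oo(d+k)$ with rectangular matrices rescales the off-diagonal Frobenius contribution, but the $\vbeta$-form with $\beta_0=-\tfrac12,\beta_1=\alpha$ lands precisely on $\alpha\|w_a\|_F^2+\|w_r\|_F^2$, so the bookkeeping works out). What your route buys is an explanation of \emph{why} the balanced scaling $s_{\alpha^{1/2}}$ produces Frobenius antisymmetry, rather than a verification that it happens to; what the paper's route buys is independence from the earlier antisymmetry statement and a completely self-contained two-line check.
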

\begin{proof}For $\ttW=\begin{bmatrix}\ttW_a\\ \ttW_r\end{bmatrix}$, we have
$$\rP_{AR}^{bal}\ttW
=\begin{bmatrix}\alpha^{\frac{1}{2}}((4\alpha-1)\alpha^{-\frac{1}{2}}\ttW_aA + R^{\sfT}\ttW_r)_{\asym}\\
       \alpha (\ttW_rA-\alpha^{-\frac{1}{2}}R \ttW_a)\end{bmatrix}
      = \begin{bmatrix}((4\alpha-1)\ttW_aA + \alpha^{\frac{1}{2}}R^{\sfT}\ttW_r)_{\asym}\\
        \alpha \ttW_rA-\alpha^{\frac{1}{2}}R \ttW_a\end{bmatrix}.$$
Consider the basis vectors $E_{ij}$ with $1\leq i\leq d+k,1\leq j\leq d$, where $E_{ij}$ is an elementary matrix with the $(i,j)$-entry is one and other entries vanish. Write $A = \sum_{qs=(1,1)}^{(d,d)}a_{qs}E_{qs}, R= \sum_{qs=(1,1)}^{(k,d)}r_{qs}E_{(d+q)s}$. For $i\leq d$, 
 $$\begin{gathered}
   |\rP^{bal}_{AR}E_{ij}|_{\vec{1}} =\bigg\lvert \begin{bmatrix}\frac{4\alpha-1}{2}(\sum_{r=1}^d a_{jr}E_{ij}E_{jr} - \sum_{r=1}^d a_{jr} E_{rj}E_{ji}) \\ -\alpha^{\frac{1}{2}} \sum_{q=1}^k r_{qi}E_{(d+q)i}E_{ij} \end{bmatrix}\bigg\rvert_{\vec{1}}\\
   \leq \frac{|4\alpha-1|}{2}(\sum_{r=1}^d |a_{jr}| + \sum_{s=1}^d |a_{js}|) +
   \alpha^{\frac{1}{2}} \sum_{q=1}^k |r_{qi}|\leq n_A
\end{gathered}
$$
where $|.|_{\vec{1}}$ denotes the $1$-norm after vectorization, because $E_{ij}E_{jr} = E_{ir}$ is distinct from $E_{sj}E_{ji}=E_{si}$ unless $i=r=s$, in this case, we have a cancellation. For $i>d$
  $$\begin{gathered}
   |\rP^{bal}_{AR}E_{ij}|_{\vec{1}} =\bigg\lvert \begin{bmatrix}\alpha^{\frac{1}{2}}(\sum_{s=1}^dr_{i-d,s}E_{si}E_{ij})_{\asym}\\
     \alpha E_{ij}\sum_s a_{js}E_{js}\end{bmatrix}\bigg\rvert_{\vec{1}}\leq \frac{2\alpha^{\frac{1}{2}}}{2}\sum_s |r_{i-d,s}| +\alpha \sum_s |a_{js}|\leq n_R
   .
   \end{gathered}
 $$
This gives us the bound. To see $\rP_{AR}^{bal}\circ \mathsf{skew}_{top}$ is antisymmetric, note the diagonal parts $\ttW_a\mapsto (4\alpha-1)((\ttW_a)_{\asym}A)_{\asym}$ and $\ttW_r\mapsto \alpha \ttW_rA$ are both antisymmetric in the Frobenius norm, since $A$ is antisymmetric. We can verify directly the off-diagonal operators $F_1: \ttW_r\mapsto \alpha^{\frac{1}{2}}(R^{\sfT}\ttW_r)_{\asym}$ and $F_2: \ttW_a\mapsto -\alpha^{\frac{1}{2}}R(\ttW_a)_{\asym}$ satisfy $F_1=-F_2^{\sfT}$ (the operator transpose is also in the Frobenius norm). For flag manifolds, clearing flag diagonal blocks does not increase norms, thus the same estimate holds.
\end{proof}
With this estimate, following scipy's implementation, we have a finer extension of \cite[table 3.1]{HighamExpAction} containing a look up table for constants $\theta_m$ from $m$. The number of Taylor terms $m_*$ will be the value of $m$ minimizing $\mathrm{ceil}(\frac{m\|\rP\|_1}{\theta_m})$ and the scaling parameter $s$ is set to be $\mathrm{ceil}(\frac{\|\rP\|_1}{\theta_{m_*}})$, both are used in the approximation $\expa(\rP, \ttB) \approx (T_m(s^{-1} \rP))^s\ttB$.
In \cref{alg:expv} We implement an evaluation of $\expa$, given an estimate of the number of Taylor  terms and the scaling factor using the 1-norm estimate, $\rP$ is assumed to be an antisymmetric operator in the Frobenius norm (if this does not hold, convergence may be slower than the library version which uses higher norms.)
\begin{algorithm}
\caption{Computing exponential action $e^{t\rP}B$ based on the scaling method. The number of Taylor terms $m_*$ and the scaling factor $s_*$ are pre-estimated.}
\label{alg:expv}
\begin{algorithmic}[1]
\State Input: $\rP, B, t, m_*, s_*$
\State Choose a relative tolerance level $tol$
\For{$i = 0,\cdots, s_*-1$}
\State $old\_norm \gets |B|_2$
\State $F\gets B$
\For{$j = 0,\cdots, m_*-1$}
    \State $B \gets \frac{t}{s_*(j+1)}\rP(B)$
    \State $new\_norm = |B|_2$
    \State $F \gets F + B$
    \If{$old\_norm + new\_norm < tol|F|_2$} 
    \State break
    \EndIf
    \State $old\_norm = new\_norm$
\EndFor
\EndFor
\end{algorithmic}
\end{algorithm}

If we use the library \emph{expm\_multiply} function, we need to supply the transpose operator for $\rP_{\ttV}$ in \cref{eq:transportCheeger}. We have
\begin{lemma}If $\fa$ is a transposable subalgebra and $\rB = \beta\beta_0\dI_{\fa}$, in the Frobenius inner product, for $\ttC\in \cE=\R^{n\times n}$, the adjoint of the operator $\rP_{\ttV}$ in \cref{eq:transportCheeger} is given by
\begin{equation}\rP_{\ttV}^{\sfT}\ttC = \frac{1}{2}\left([\ttC,\ttV^{\sfT}]
            + (1+\beta)([(\ttpa\ttV )^{\sfT},\ttC]
            - \ttpa[\ttC,\ttV^{\sfT}])\right).\label{eq:padj}
\end{equation}
\end{lemma}
\begin{proof}We use the equality $\Tr \ttB_1^{\sfT} [\ttB_2, \ttX] = \Tr \ttB_2^{\sfT}[\ttB_1,\ttX^{\sfT}]$ for $\ttB_1, \ttB_2, \ttX\in\R^{n\times n}$ by taking transpose then use \cref{eq:cyclicInvariant}. Thus, for $\ttB\in \cE=\R^{n\times n}$,
  $$\begin{gathered}\Tr \ttC^{\sfT}\left( \frac{1}{2}[\ttB, \ttV] +\frac{1+\beta}{2}\left( - [\ttpa\ttB,\ttV] -[\ttB, \ttpa\ttV]   \right)\right)\\
    =\frac{1}{2}\Tr\ttB^{\sfT}[\ttC,\ttV^{\sfT}] -\frac{1+\beta}{2}\left( \Tr(\ttpa\ttB)^{\sfT}[\ttC,\ttV^{\sfT}] + \Tr\ttB^{\sfT}[\ttC, (\ttpa\ttV)^{\sfT}]\right).
    \end{gathered}
  $$
  Using $\Tr(\ttpa\ttB)^{\sfT}[\ttC,\ttV^{\sfT}] = \Tr\ttB^{\sfT}(\ttpa[\ttC,\ttV^{\sfT}])$, we get the expression for $\rP^{\sfT}_{\ttV}$.
\end{proof}

\section{The Grassmann case}\label{app:appGrass}The Grassmann manifold is a flag manifold with $p = 1$. In this case, with $\vd= (d, n-d)$, $\alpha = \frac{1}{2}$, we have $\rP$ is identically zero, $A=0$, $Y^{\sfT}\eta=0$, thus, $\begin{bmatrix}Y|Q\end{bmatrix}^{\sfT}QQ^{\sfT}\eta = \begin{bmatrix}0\\ Q^{\sfT}\end{bmatrix}\eta$. We have
$$        \Delta(t) = \begin{bmatrix}Y|Q\end{bmatrix}\expm(t\begin{bmatrix} 0 & -R^{\sfT}\\ R & 0\end{bmatrix})\begin{bmatrix}0\\ Q^{\sfT}\end{bmatrix}\eta        + \eta - QQ^{\sfT}\eta.$$
If $\eta = Q\Sigma V^{\sfT} = QR$ is a compact SVD decomposition, where $R:=\Sigma V^{\sfT}$, with $\Sigma$ is diagonal in $\R^{k\times k}$, $V\in \R^{k\times d}$ with $V^{\sfT}V = \dI_d$. Set $C = \begin{bmatrix} V & 0 \\ 0 & \dI_d\end{bmatrix}\in \R^{(d+k)\times (d+k)}$, then
  $$\begin{gathered}\begin{bmatrix} 0 & -R^{\sfT}\\ R & 0\end{bmatrix} = C \begin{bmatrix} 0 & -\Sigma\\ \Sigma & 0\end{bmatrix} C^{\sfT},\\
        \expm (t\begin{bmatrix} 0 & -R^{\sfT}\\ R & 0\end{bmatrix}) = \begin{bmatrix}\dI_d - VV^{\sfT} & 0\\ 0 & 0\end{bmatrix} +C \begin{bmatrix} \cos(t\Sigma) & -\sin(t\Sigma)\\ \sin(t\Sigma) & \cos(t\Sigma)\end{bmatrix} C^{\sfT},
  \end{gathered}$$
  where we use $\rC^{\sfT}\rC = \dI_{d+k}$ to simplify the middle terms $\frac{t^j}{j!}(C\begin{bmatrix} 0 & -\Sigma\\ \Sigma & 0\end{bmatrix}C^{\sfT})^j$, $j>0$ in the power series expansion of $\expm$, while the constant term has the adjustment with the term $\dI_d - VV^{\sfT}$ above. Thus, $\expm (t\begin{bmatrix} 0 & -R^{\sfT}\\ R & 0\end{bmatrix})\begin{bmatrix}0\\ Q^{\sfT}\end{bmatrix}\eta$ simplifies to $C\begin{bmatrix} -\sin(t\Sigma)\\ \cos(t\Sigma)\end{bmatrix}Q^{\sfT}\eta$, and 
  \begin{equation} \Delta(t) = \begin{bmatrix}YV|Q\end{bmatrix}\begin{bmatrix} -\sin(t\Sigma)\\ \cos(t\Sigma)\end{bmatrix}Q^{\sfT}\eta  + \eta - QQ^{\sfT}\eta,\label{eq:grasstrans}
    \end{equation}
  which is eq (2.68) in \cite{Edelman_1999}.

\bibliographystyle{amsplain}
\bibliography{stiefel_parallel}
\end{document}